\newcommand{\chairxauthorbibfont}{\textsc}
\newcommand{\chairxtitlebibfont}{\textit}
\newcommand{\chairxseriesbibfont}{\textit}
\newcommand{\poly}{{\scriptscriptstyle{\mathrm{poly}}}}    
\newcommand{\Tpoly}{T_\poly}  
\newcommand{\Dpoly}{D_\poly}  
\newcommand{\hkr}{\mathsf{hkr}}   
\newcommand{\Tay}{{\scriptscriptstyle{\mathrm{Tay}}}}    
\newcommand{\nice}{{\scriptscriptstyle{\mathrm{nice}}}}      
\title{The Strong Homotopy Structure of BRST Reduction}
\author{
  \textbf{Chiara Esposito}\thanks{\texttt{chesposito@unisa.it}},\\[0.3cm]
   Dipartimento di Matematica\\
   Università degli Studi di Salerno\\
   via Giovanni Paolo II, 123\\
   84084 Fisciano (SA), Italy \\[0.5cm]
   \textbf{Andreas Kraft}\thanks{\texttt{akraft@impan.pl}},\\[0.3cm]
	 Institute of Mathematics  \\
	  Polish Academy of Sciences \\
		ul. \'Sniadeckich 8 \\ 
		00-656 Warsaw,
		Poland  \\[0.5cm]
  \textbf{Jonas Schnitzer}\thanks{\texttt{jonas.schnitzer@math.uni-freiburg.de}},\\[0.3cm]
  Department of Mathematics\\
   University of Freiburg\\
   Ernst-Zermelo-Straße, 1\\
	D-79104 Freiburg, Germany
}
\date{\today}
\begin{document}

\maketitle

\abstract{
In this paper we propose a reduction scheme for polydifferential operators 
phrased in terms of $L_\infty$-morphisms. The desired reduction $L_\infty$-morphism has been obtained
by applying an explicit version of the homotopy transfer theorem.
Finally, we prove that the reduced star product
induced by this reduction $L_\infty$-morphism and the reduced star product
obtained via the formal Koszul complex are equivalent.
	}
		
	\bigskip


\newpage

\tableofcontents

\section{Introduction}

This paper aims to propose a reduction scheme for equivariant 
polydifferential operators that is phrased in terms of 
$L_\infty$-morphisms, generalizing the results from 
\cite{esposito.kraft.schnitzer:2020a:pre}, obtained for
polyvector fields. Our main motivation comes from formal deformation 
quantization: 
Deformation quantization has been introduced by
Bayen, Flato, Fronsdal, Lichnerowicz and Sternheimer in \cite{bayen.et.al:1978a} and it relies on the
idea that the quantization of a phase space described by a Poisson manifold $M$
is described by a formal deformation, so-called \emph{star product}, of the commutative algebra of smooth
complex-valued functions $\Cinfty(M)$ in a formal parameter $\hbar$.
The existence and classification of 
star products on Poisson manifolds has been provided by Kontsevich's
formality theorem \cite{kontsevich:2003a}, whereas the invariant 
setting of Lie group actions has been treated by 
Dolgushev, see~\cite{dolgushev:2005a,dolgushev:2005b}. 
More explicitly, the formality theorem provides an $L_\infty$-quasi-isomorphism between 
the differential graded Lie algebra (DGLA) of polyvector fields 
$\Tpoly(M)$ and the polydifferential operators $\Dpoly(M)$ 
resp. the invariant versions. 
As such, it maps in particular Maurer-Cartan elements in the DGLA of polyvector 
fields, i.e. (formal) Poisson structures, to Maurer-Cartan elements 
in the DGLA of polydifferential operators, which correspond to star products.
 
One open question and our main motivation is 
to investigate the compatibility of deformation quantization and 
phase space reduction in the Poisson setting, and in this present paper 
we propose a way to describe the reduction on the quantum side by an 
$L_\infty$-morphism. Given a Lie group 
$\group{G}$ acting on a manifold $M$, we aim to reduce \emph{equivariant star products} $(\star,H)$, i.e. 
pairs consisting of an invariant star product $\star$ and a 
quantum momentum map $H= \sum_{r=0}^\infty \hbar^r J_r \colon \liealg{g} \longrightarrow \Cinfty(M)[[\hbar]]$, where $\liealg{g}$ is the Lie algebra of
$\group{G}$. In this case, $J_0$ is a classical momentum map for the Poisson structure induced by $\star$. Interpreting it as smooth map 
$J_0\colon M\longrightarrow \liealg{g}^*$ and assuming that 
$0\in \liealg{g}^*$ is a value and regular value, it follows 
that $C = J^{-1}(\{0\})$ is a closed embedded submanifold of
$M$ and by the Poisson version of the Marsden-Weinstein reduction 
\cite{marsden.weinstein:1974a} we know that unders suitable assumptions
the reduced manifold $M_\red = C/\group{G}$ is again a Poisson 
manifold if the action on $C$ is proper and free. 
In this setting, there is a well-known BRST-like reduction procedure \cite{bordemann.herbig.waldmann:2000a,gutt.waldmann:2010a} of equivariant star products on $M$ to star products on $M_\red$.

In order to describe this reduction by an $L_\infty$-morphism, we have 
to fix at first the DGLA controlling Hamiltonian actions in the 
quantum setting, i.e. a DGLA whose Maurer-Cartan elements correspond to 
equivariant star products. We denote it by 
\begin{equation*}
  (D_{\liealg{g}}(M)[[\hbar]],\hbar\lambda,
	\del^\liealg{g}-[J_0,\argument]_\liealg{g}, 
	[\argument,\argument]_\liealg{g}),
\end{equation*}
where $\lambda = e^i \otimes (e_i)_M$ is given by the fundamental vector 
fields of the $\group{G}$-action in terms of a basis $e_1,\dots,e_n$ of 
$\liealg{g}$ with dual basis $e^1,\dots,e^n$ of $\liealg{g}^*$. 
It is called the DGLA of \emph{equivariant polydifferential operators}. 

The construction of the desired $L_\infty$-morphism to 
$(\Dpoly(M_\red),\del,[\argument,\argument]_G)$ is then based on 
the following steps:
\begin{itemize}
  \item Assuming for simplicity $M=C\times\liealg{g}^*$, which always holds 
	      locally in suitable situations, we can perform a Taylor expansion 
				around $C$ and end up with a DGLA $D_\Tay(C\times\liealg{g}^*)$. 
				Using a 'partial homotopy', we find a deformation retract to 
				a DGLA structure on the space $ \left( \prod_{i=0}^\infty( \Sym^i \liealg{g} \otimes \Dpoly(C))\right)^\group{G}$, i.e. we get rid of 
				differentiations in $\liealg{g}^*$-direction.
	\item For the polyvector fields in \cite{esposito.kraft.schnitzer:2020a:pre} 
				we used the canonical linear 
	      Poisson structure $\pi_\KKS $ on the dual of the action Lie algebroid 
				$C \times \liealg{g}$ for the reduction.
				The analogue structure in our quantum setting is the product on 
				the quantized universal enveloping algebra $\Universal_\hbar(C\times\liealg{g})$ 
				of the action Lie algebroid. We use this product to perturb the 
				deformation retract from the last point. This is more complicated 
				as the polyvector field case since we have to use now the homological 
				perturbation lemma to perturb the involved chain maps, and the 
				deformed maps are no longer compatible with the Lie brackets.
				
	\item We use the homotopy transfer theorem to construct the 
	      $L_\infty$-projection from the Taylor expansion to 
				$ \left( \prod_{i=0}^\infty( \Sym^i \liealg{g} \otimes \Dpoly(C))\right)^\group{G}$ with transferred 
				$L_\infty$-structure. Notice that in the 
				polyvector field case it was not necessary to 
				transfer the DGLA structure. 
				
	\item We check in Proposition~\ref{prop:HigherBracketsDtayvanish} that 
	      the transferred $L_\infty$-structure is just a DGLA structure, 
				and in Proposition~\ref{prop:prProjectionfromTransferredStructure} 
				that the transferred Lie bracket is compatible with the 
				projection to $D_\poly(M_\red)[[\hbar]]$. Thus we get the 
	      reduction $L_\infty$-morphism from the Taylor expansion 
				to the polydifferential 
				operators on $M_\red$. Twisting it by the product on the universal 
				enveloping algebra ensures that we start in the right curved 
				DGLA structure. 
\end{itemize}

Finally, the morphism can be globalized to general smooth manifolds 
$M$ with sufficiently nice Lie group actions 
and we get the following result (Theorem~\ref{thm:GlobalDred}):

\begin{nntheorem}
  There exists an $L_\infty$-morphism 
\begin{equation}
\label{eq:DredIntro}
  \mathrm{D}_\red \colon
  (D_{\liealg{g}}(M)[[\hbar]],\hbar\lambda,\del^\liealg{g}-[J_0,\argument]_\liealg{g}, 
	[\argument,\argument]_\liealg{g})
	\longrightarrow 
	(\Dpoly(M_\red)[[\hbar]],0,
\del,[\argument,\argument]_G),
\end{equation}	   
	called \emph{reduction $L_\infty$-morphism}. 
\end{nntheorem}
Finally, we compare the reduction of equivariant star products 
via $\mathrm{D}_\red$ to a slightly modified version of 
the BRST reduction from 
\cite{bordemann.herbig.waldmann:2000a,gutt.waldmann:2010a}, see 
Theorem~\ref{thm:CompRed}:

\begin{nntheorem}
  Let $(\star,H)$ be an equivariant star product on $M$. 
	Then the reduced star product induced by 
	$\mathrm{D}_\red$ from \eqref{eq:DredIntro} and the 
	reduced star product 	via the formal Koszul complex are equivalent. 
\end{nntheorem}

Note that together with \cite[Theorem~5.1]{esposito.kraft.schnitzer:2020a:pre}
we have now the diagram:
\begin{equation*}
\begin{tikzcd}
(T^\bullet_{\liealg{g}}(M)[[\hbar]],\hbar\lambda,[-J_0,\argument]_\liealg{g},
[\argument,\argument]_\liealg{g}) 
\arrow[dd, "\mathrm{T}_\red"] 
&& (D^\bullet_{\liealg{g}}(M)[[\hbar]],\hbar\lambda,\del^\liealg{g}-[J_0,\argument]_\liealg{g}, 
		[\argument,\argument]_\liealg{g})
	\arrow[dd, "\mathrm{D}_\red"]\\ 
&  &\\
(\Tpoly^\bullet(M_\red)[[\hbar]],0,0,[\argument,\argument]_S)\arrow[rr, "F_\red"]& 
& (\Dpoly^\bullet(M_\red)[[\hbar]],0,
\del,[\argument,\argument]_G),
\end{tikzcd}
\end{equation*}
where $F_\red$ is the standard Dolgushev formality with 
respect to a torsion-free covariant derivative on $M_\red$. 
Moreover, in \cite{esposito.kraft.schnitzer:2022a} we show that 
the Dolgushev formality is compatible with $\lambda$ under 
suitable flatness assumptions. In these flat cases it 
induces an $L_\infty$-morphism
\begin{equation*}
    F^\liealg{g} \colon
    (T^\bullet_{\liealg{g}}(M)[[\hbar]],\hbar\lambda,[-J_0,\argument]_\liealg{g},[\argument,\argument]_\liealg{g})
		\longrightarrow
		(D^\bullet_{\liealg{g}}(M)[[\hbar]],\hbar\lambda,\del^\liealg{g}-[J_0,\argument]_\liealg{g}, 
		[\argument,\argument]_\liealg{g}),
\end{equation*}
which gives the forth arrow in the above diagram, and we plan to investigate 
its commutativity (up to homotopy) in future work.

The results of this paper are partially based on \cite{kraft:2021a} and 
the paper is organized as follows: In Section~\ref{sec:Preliminaries} we 
we recall the basic notions of (curved) $L_\infty$-algebras, 
$L_\infty$-morphisms and twists and fix the notation. Then we introduce in 
Section~\ref{sec:EquivDpoly} the curved DGLA of 
equivariant polydifferential operators 
and show that they control indeed Hamiltonian actions. In 
Section~\ref{sec:RedDpoly} we construct the global reduction 
$L_\infty$-morphism to the polydifferential operators on the reduced 
manifold. Finally, we compare in Section~\ref{sec:ComparisonofRedProd} 
the reduction via this reduction morphism $\mathrm{D}_\red$ with a slightly modified 
BRST reduction of equivariant 
star products as explained in Appendix~\ref{sec:BRSTReductionStarProducts}, 
where we also recall the homological perturbation lemma. 
In Appendix~\ref{sec:homotopytransfertheorem} 
we give explicit formulas for 
the transferred $L_\infty$-structure and the $L_\infty$-projection 
induced by the homotopy transfer theorem.

\subsection*{Acknowledgements}
  The authors are grateful to Ryszard Nest and Boris Tsygan 
	for the helpful comments. This work was supported by the 
	National Group for Algebraic and Geometric Structures, and 
	their Applications (GNSAGA – INdAM).
  The third author is supported by the DFG research training group 
  "gk1821: Cohomological Methods in Geometry".

\section{Preliminaries}
\label{sec:Preliminaries}

\subsection{$L_\infty$-Algebras, Maurer-Cartan Elements and Twisting}

In  this section we recall the notions of (curved) $L_\infty$-algebras,
$L_\infty$-morphisms and their twists by Maurer--Cartan elements to
fix the notation. 
Proofs and further details can be found in
\cite{dolgushev:2005a,dolgushev:2005b,esposito.dekleijn:2021a}.

We denote by $V^\bullet$ a graded vector space over a field $\mathbb{K}$ of
characteristic $0$ and define the \emph{shifted} vector space
$V[k]^\bullet$ by
\begin{equation*}
  V[k]^\ell
  =
  V^{\ell+k}.
\end{equation*}
A degree $+1$ coderivation $Q$ on the coaugmented counital conilpotent
cocommutative coalgebra $S^c(\mathfrak{L})$ cofreely cogenerated by
the graded vector space $\mathfrak{L}[1]^\bullet$ over $\mathbb{K}$ is
called an \emph{$L_\infty$-structure} on the graded vector space
$\mathfrak{L}$ if $Q^2=0$. The (universal) coalgebra
$S^c(\mathfrak{L})$ can be realized as the symmetrized
deconcatenation coproduct on the space
$\bigoplus_{n\geq0}\Sym^n\mathfrak{L}[1]$ where $\Sym^n\mathfrak{L}[1]$ 
is the space of coinvariants for the
usual (graded) action of $S_n$ (the symmetric group in $n$ letters) on
$\otimes^n(\mathfrak{L}[1])$, see e.g. \cite{esposito.dekleijn:2021a}. 
Any degree $+1$ coderivation $Q$ on $S^c(\mathfrak{L})$ is uniquely determined by the
components
\begin{equation}
  Q_n\colon \Sym^n(\mathfrak{L}[1])\longrightarrow \mathfrak{L}[2]
\end{equation}
through the formula 
\begin{align}
\begin{split}
  &  Q(\gamma_1\vee\ldots\vee \gamma_n)
  = \\
  & \sum_{k=0}^n\sum_{\sigma\in\mbox{\tiny Sh($k$,$n-k$)}}
  \epsilon(\sigma)Q_k(\gamma_{\sigma(1)}\vee\ldots\vee
  \gamma_{\sigma(k)})\vee\gamma_{\sigma(k+1)}\vee
  \ldots\vee\gamma_{\sigma(n)}.
\end{split}
\end{align} 
Here Sh($k$,$n-k$) denotes the set of $(k, n-k)$ shuffles in $S_n$,
$\epsilon(\sigma)=\epsilon(\sigma,\gamma_1,\ldots,\gamma_n)$ is a sign
given by the rule $
\gamma_{\sigma(1)}\vee\ldots\vee\gamma_{\sigma(n)}=
\epsilon(\sigma)\gamma_1\vee\ldots\vee\gamma_n $ and we use the
conventions that Sh($n$,$0$)=Sh($0$,$n$)$=\{\id\}$ and that the empty
product equals the unit. Note in particular that we also consider a
term $Q_0$ and thus we are actually considering curved
$L_\infty$-algebras.
Sometimes we also write $Q_k = Q_k^1$ and, following
\cite{canonaco:1999a}, we denote by $Q_n^i$ the component of $Q_n^i
\colon \Sym^n \mathfrak{L}[1] \rightarrow \Sym^i \mathfrak{L}[2]$ of $Q$. 
It is given by
\begin{align}
  \label{eq:Qniformula}
	\begin{split}
  & Q_n^i(x_1\vee \cdots \vee x_n)
	=  
	 \sum_{\sigma \in \mathrm{Sh}(n+1-i,i-1)}  \\
	& \epsilon(\sigma) Q_{n+1-i}^1(x_{\sigma(1)}\vee \cdots\vee  x_{\sigma(n+1-i)})\vee
	x_{\sigma(n+2-i)} \vee \cdots \vee x_{\sigma(n)},
\end{split}
\end{align}
where $Q_{n+1-i}^1$ are the usual structure maps. 
\begin{example}[Curved DGLA]
  \label{ex:curvedlie}
  A basic example of an $L_\infty$-algebra is that of a (curved) 
	differential graded Lie
  algebra $(\liealg{g},R,\D,[\argument,\argument])$ by setting
  $Q_0(1)={ -}R$, $Q_1={ -}\D$,
  $Q_2(\gamma\vee\mu)={ -(-1)^{|\gamma|}}[\gamma,\mu]$ and $Q_i=0$ for
  all $i\geq 3$.   Note that we denoted by $|\cdot |$ the degree in $\liealg{g}[1]$. 
\end{example}

Let us consider two $L_\infty$-algebras $(\mathfrak{L},Q)$ and
$(\widetilde{\mathfrak{L}},\widetilde{Q})$.  A degree $0$ counital
coalgebra morphism
\begin{equation*}
  F\colon 
  S^c(\mathfrak{L})
  \longrightarrow 
  S^c(\widetilde{\mathfrak{L}})
\end{equation*}
such that $FQ = \widetilde{Q}F$ is said to be an
\emph{$L_\infty$-morphism}.
A coalgebra morphism $F$ from $S^c(\mathfrak{L})$ to
$S^c(\widetilde{\mathfrak{L}})$ such that $F(1)=1$ is uniquely determined by its
components (also called \emph{Taylor coefficients})
\begin{equation*}
  F_n\colon \Sym^n(\mathfrak{L}[1])\longrightarrow \widetilde{\mathfrak{L}}[1],
\end{equation*}
where $n\geq 1$. Namely, we set $F(1)=1$ and use the formula
\begin{align}
\label{coalgebramorphism}
\begin{split}
  & F(\gamma_1\vee\ldots\vee\gamma_n)=
	\sum_{p\geq1}\sum_{\substack{k_1,\ldots, k_p\geq1\\k_1+\ldots+k_p=n}}
	\sum_{\sigma\in \mbox{\tiny Sh($k_1$,..., $k_p$)}}  \\
  &
  \frac{\epsilon(\sigma)}{p!}
  F_{k_1}(\gamma_{\sigma(1)}\vee\ldots\vee\gamma_{\sigma(k_1)})\vee\ldots\vee 
  F_{k_p}(\gamma_{\sigma(n-k_p+1)}\vee\ldots\vee\gamma_{\sigma(n)}),
\end{split}
\end{align}
where Sh($k_1$,...,$k_p$) denotes the set of $(k_1,\ldots,
k_p)$-shuffles in $S_n$ (again we set Sh($n$)$=\{\id\}$).
We also write $F_k = F_k^1$ and similarly to \eqref{eq:Qniformula} we get 
coefficients $F_n^j \colon \Sym^n \mathfrak{L}[1] \rightarrow 
\Sym^j \widetilde{\mathfrak{L}}[1]$ of $F$ by 
taking the corresponding terms in \cite[Equation~(2.15)]{dolgushev:2006a}. 
Note that $F_n^j$ only depends on $F_k^1 = F_k$ for $k\leq n-j+1$. 
Given an $L_\infty$-morphism $F$ of (non-curved) $L_\infty$-algebras $(\mathfrak{L},Q)$ and
$(\widetilde{\mathfrak{L}},\widetilde{Q})$,
we obtain the map of complexes
\begin{equation*}
  F_1\colon (\mathfrak{L},Q_1)\longrightarrow (\widetilde{\mathfrak{L}},\widetilde{Q}_1).
\end{equation*}
In this case the $L_\infty$-morphism $F$ is called an
\emph{$L_\infty$-quasi-isomorphism} if $F_1$ is a
quasi-isomorphism of complexes.
Given a DGLA $(\liealg{g}, \D, [\argument,\argument])$ and an element
$\pi\in \liealg{g}[1]^0$ we can obtain a curved Lie algebra by
defining a new differential $\D + [\pi,\argument]$ and considering the
curvature $R^\pi=\D\pi+\frac{1}{2}[\pi,\pi]$.
In fact the same procedure can be applied to a curved Lie algebra
$(\liealg{g}, R,\D, [\argument,\argument])$ to obtain the \emph{twisted}
curved Lie algebra $(\mathfrak{L}, R^\pi, \D +
[\pi,\argument],[\argument,\argument])$, where
\begin{equation*}
  R^\pi
  :=
  R+\D\pi + \frac{1}{2}[\pi,\pi].
\end{equation*} 
The element $\pi$ is called a \emph{Maurer--Cartan element} if it
satisfies the equation
\begin{equation}
\label{eq:DefMCEL}
  R+\D\pi+\frac{1}{2}[\pi,\pi]
  =
  0.
\end{equation}
Finally, it is important to recall that given a DGLA morphism, or more
generally an $L_\infty$-morphism, $F\colon \liealg{g}\rightarrow
\liealg{g}'$ between two DGLAs, one may associate to any 
(curved) Maurer--Cartan element $\pi\in\liealg{g}[1]^0$ 
a (curved) Maurer--Cartan element
\begin{equation}
\label{eq:FMC}
  \pi_F
	:=
	\sum_{n\geq 1} \frac{1}{n!}   F_n(\pi\vee\ldots\vee\pi)\in 
	\liealg{g}'[1]^0.
\end{equation}
In order to make sense of these infinite sums we consider DGLAs with complete 
descending filtrations
\begin{equation}
  \cdots 
	\supseteq 
	\mathcal{F}^{-2}\liealg{g}
	\supseteq 
	\mathcal{F}^{-1}\liealg{g}
	\supseteq 
	\mathcal{F}^{0}\liealg{g}
	\supseteq 
	\mathcal{F}^{1}\liealg{g}
	\supseteq 
	\cdots,
	\quad \quad
	\liealg{g}
	\cong
	\varprojlim \liealg{g}/\mathcal{F}^n\liealg{g}
\end{equation}  
and 
\begin{equation}
  \D(\mathcal{F}^k\liealg{g})
	\subseteq
	\mathcal{F}^k\liealg{g}
	\quad \quad \text{ and } \quad \quad
	[\mathcal{F}^k\liealg{g},\mathcal{F}^\ell\liealg{g}]
	\subseteq 
	\mathcal{F}^{k+\ell}\liealg{g}.
\end{equation}
In particular, $\mathcal{F}^1\liealg{g}$ is a projective limit of 
nilpotent DGLAs. In most cases the filtration is bounded below, i.e. 
bounded from the left with $\liealg{g}=\mathcal{F}^k\liealg{g}$ for some 
$k\in \mathbb{Z}$. If the filtration is unbounded, then we assume always that 
it is in addition exhaustive, i.e. that
\begin{equation}
  \liealg{g}
	=
	\bigcup_n \mathcal{F}^n\liealg{g},
\end{equation}
even if we do not mention it explicitly. Moreover, we assume that the 
DGLA morphisms are compatible with the filtrations.
Considering only Maurer--Cartan elements in $\mathcal{F}^1\liealg{g}^1$ ensures the 
well-definedness of \eqref{eq:FMC}. 
Mainly, the filtration is induced by formal power 
series in a formal parameter $\hbar$. Starting with a 
DGLA $(\liealg{g},\D,[\argument,\argument])$, its $\hbar$-linear extension to 
formal power series  $\liealg{G}=\liealg{g}[[\hbar]]$ of a 
DGLA $\liealg{g}$, has the complete descending filtration 
$\mathcal{F}^k \liealg{G} = \hbar^k \liealg{G}$. 

One can not only twist the DGLAs resp. $L_\infty$-algebras, but also the 
$L_\infty$-morphisms between them. Below we need the following result, see  
\cite[Prop.~2]{dolgushev:2006a} and \cite[Prop.~1]{dolgushev:2005b}.

\begin{proposition}
  \label{prop:twistinglinftymorphisms}
  Let $F\colon (\liealg{g},Q)\rightarrow (\liealg{g}',Q')$ be an 
	$L_\infty$-morphism of DGLAs, $\pi \in \mathcal{F}^1\liealg{g}^1$ 
	a Maurer-Cartan element and $S = F^1(\cc{\exp}(\pi))
	\in \mathcal{F}^1 \liealg{g}'^1$.
	\begin{propositionlist}					
		\item The map 
		      \begin{equation*}
					  F^\pi
						=
						\exp(-S\vee) F \exp(\pi\vee) \colon 
						\cc{\Sym}(\liealg{g}[1])
						\longrightarrow
						\cc{\Sym}(\liealg{g}'[1])
					\end{equation*}
					defines an $L_\infty$-morphism between the DGLAs 
					$(\liealg{g},\D+[\pi,\,\cdot\,])$ and $(\liealg{g}',\D+[S,\,\cdot\,])$.
					
		\item The structure maps of $F^\pi$ are given by 
		      \begin{equation}
					  \label{eq:twisteslinftymorphism}
					  F_n^\pi(x_1,\dots, x_n)
						=
						\sum_{k=0}^\infty \frac{1}{k!} 
						F_{n+k}(\pi, \dots, \pi,x_1 ,	\dots, x_n).
					\end{equation}
			
		\item Let $F$ be an $L_\infty$-quasi-isomorphism such that 
		      $F_1^1$ is not only a quasi-isomorphism of filtered complexes 
					$L\rightarrow L'$ but even induces a quasi-isomorphism
          \begin{equation*}
            F_1^1 \colon
	          \mathcal{F}^k L
	          \longrightarrow
	          \mathcal{F}^kL'
          \end{equation*}
          for each $k$. Then $F^\pi$ is an $L_\infty$-quasi-isomorphism.	 
	\end{propositionlist}
\end{proposition}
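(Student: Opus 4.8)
The plan is to realize the twist as a conjugation on the completed symmetric coalgebra. Since $\pi\in\mathcal{F}^1\liealg{g}^1$ has degree $0$ in $\liealg{g}[1]$, the element $\cc{\exp}(\pi)\in\cc{\Sym}(\liealg{g}[1])$ is group-like, and multiplication by it, $\exp(\pi\vee)$, is a coalgebra automorphism with inverse $\exp(-\pi\vee)$; the infinite sums converge because the filtration is complete and $\pi\in\mathcal{F}^1$. One defines the twisted codifferentials by conjugation,
\begin{equation*}
  Q^\pi = \exp(-\pi\vee)\,Q\,\exp(\pi\vee),
  \qquad
  (Q')^S = \exp(-S\vee)\,Q'\,\exp(S\vee),
\end{equation*}
and I would first recall that these are again degree $+1$ codifferentials whose structure maps reproduce the twisted brackets. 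Since $\pi$ (resp. $S=\pi_F$) is Maurer--Cartan, one has $Q(\cc{\exp}(\pi))=0$ (resp. $Q'(\cc{\exp}(S))=0$), so the curvature components $Q^\pi_0$ and $(Q')^S_0$ vanish and we land in the uncurved DGLAs $(\liealg{g},\D+[\pi,\argument])$ and $(\liealg{g}',\D+[S,\argument])$.

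For the first statement, I note that $F^\pi=\exp(-S\vee)\,F\,\exp(\pi\vee)$ is a composition of coalgebra morphisms, hence a coalgebra morphism. Since $F$ sends group-like elements to group-like elements and its primitive part on $\cc{\exp}(\pi)$ is exactly $S=F^1(\cc{\exp}(\pi))$, we get $F(\cc{\exp}(\pi))=\cc{\exp}(S)$ and therefore $F^\pi(1)=\exp(-S\vee)(\cc{\exp}(S))=1$. The compatibility with the codifferentials is then a direct cancellation: using $FQ=Q'F$ and $\exp(\pi\vee)\exp(-\pi\vee)=\id=\exp(S\vee)\exp(-S\vee)$,
\begin{equation*}
  F^\pi Q^\pi
  = \exp(-S\vee)\,F\,Q\,\exp(\pi\vee)
  = \exp(-S\vee)\,Q'\,F\,\exp(\pi\vee)
  = (Q')^S\,F^\pi,
\end{equation*}
so $F^\pi$ is the desired $L_\infty$-morphism.

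For the second statement I would extract the Taylor coefficients $F^\pi_n$ by projecting $F^\pi(x_1\vee\cdots\vee x_n)$ onto the cogenerators $\liealg{g}'[1]$. Expanding $\exp(\pi\vee)(x_1\vee\cdots\vee x_n)=\sum_{k\ge0}\frac1{k!}\pi^{\vee k}\vee x_1\vee\cdots\vee x_n$ and applying $F^1$ gives precisely $\sum_{k\ge0}\frac1{k!}F_{n+k}(\pi,\dots,\pi,x_1,\dots,x_n)$. The only point to check is that the prefactor $\exp(-S\vee)$ does not contribute to the primitive part: since $F$ is counital with $F(1)=1$, for $n\ge1$ the element $F(\exp(\pi\vee)(x_1\vee\cdots\vee x_n))$ has no $\cc{\Sym}^0$-component, while every summand $\frac{(-1)^j}{j!}(S\vee)^j$ with $j\ge1$ strictly raises the symmetric degree and thus cannot reach $\cc{\Sym}^1$. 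This yields \eqref{eq:twisteslinftymorphism}.

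The third statement is where the main work lies, and I expect it to be the principal obstacle, since it requires controlling a quasi-isomorphism through an infinite twist rather than a finite algebraic identity. Here the strategy is to pass to the associated graded of the complete filtration. Because $\pi,S\in\mathcal{F}^1$ we have $[\pi,\mathcal{F}^k\liealg{g}]\subseteq\mathcal{F}^{k+1}\liealg{g}$ and likewise for $S$, so the twisted differentials $\D+[\pi,\argument]$ and $\D+[S,\argument]$ agree with $\D$ on each graded piece $\mathcal{F}^k/\mathcal{F}^{k+1}$. Similarly, the formula \eqref{eq:twisteslinftymorphism} shows $F^\pi_1=F_1+\sum_{k\ge1}\frac1{k!}F_{1+k}(\pi,\dots,\pi,\argument)$, and the correction terms raise the filtration degree; hence $F^\pi_1$ induces the same map as $F_1$ on the associated graded. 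By hypothesis $F_1^1\colon\mathcal{F}^k L\to\mathcal{F}^k L'$ is a quasi-isomorphism for every $k$, so by the long exact sequences of the pairs it is a quasi-isomorphism on each $\mathcal{F}^k/\mathcal{F}^{k+1}$, and therefore so is $F^\pi_1$. Finally, since both filtrations are complete and exhaustive, the comparison theorem for filtered complexes lifts the graded quasi-isomorphism to a quasi-isomorphism of the twisted complexes $(\liealg{g},\D+[\pi,\argument])\to(\liealg{g}',\D+[S,\argument])$, which is exactly $F^\pi_1$. Thus $F^\pi$ is an $L_\infty$-quasi-isomorphism.
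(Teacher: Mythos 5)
Your proof is correct: parts (i)--(ii) via conjugation by the coalgebra automorphisms $\exp(\pi\vee)$, $\exp(-S\vee)$ and the group-like identity $F(\cc{\exp}(\pi))=\cc{\exp}(S)$, and part (iii) via the associated-graded comparison for complete exhaustive filtrations, all go through (note that the filtration-compatibility of $F$, assumed in the paper's standing conventions, is what makes the correction terms in $F_1^\pi$ raise the filtration degree). The paper itself gives no proof of this proposition --- it cites Dolgushev --- and your argument is essentially the standard one from those references, so there is nothing to reconcile.
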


\subsection{Equivariant Polydifferential Operators}
\label{sec:EquivDpoly}

In the following we present some basic results concerning equivariant polydifferential
operators, which are basically folklore knowledge and are based on \cite{tsygan:note}.

Let us consider the DGLA of
\emph{polydifferential operators} on a smooth manifold $M$
\begin{equation}
  (\Dpoly^\bullet(M),\del= [\mu,\argument]_G,[\argument,\argument]_G)
\end{equation}
Here
\begin{equation*}
	  \Dpoly^\bullet(M)
		=
		\bigoplus_{n=-1}^\infty \Dpoly^{n}(M)
\end{equation*}
where $\Dpoly^{n}(M)= \Hom_{\mathrm{diff}}(\Cinfty(M)^{\otimes n+1},
\Cinfty(M))$ are the differentiable Hochschild cochains vanishing on constants. We use the sign convention from  \cite{bursztyn.dolgushev.waldmann:2012a} for the Gerstenhaber bracket 
$[\argument,\argument]$, not the original one from \cite{gerstenhaber:1963a}. Explicitly
\begin{equation}
  \label{eq:GerstenhaberBracketClassical}
  [D,E]_G
	=
	(-1)^{\abs{E}\abs{D}} \left(D \circ E - (-1)^{\abs{D} \abs{E}} E \circ D\right)
\end{equation}
with 
\begin{equation}
  D \circ E (a_0,\dots,a_{d+e}) 
  = 
	\sum_{i=0}^{\abs{ D}} (-1)^{i \abs{ E}} 
	D(a_0,\dots, a_{i-1}, E(a_i,\dots,a_{i+e}),a_{i+e+1},\dots,a_{d+e})
\end{equation}
for homogeneous $D,E \in \Dpoly^\bullet(M)$ and $a_0,\dots,a_{d+e}
\in \Cinfty(M)$. Moreover, $\mu$ denotes the commutative pointwise 
product on $\Cinfty(M)[[\hbar]]$ and $\del$ is the usual Hochschild 
differential.

We are interested in the case of group actions where we always consider a (left) action $\Phi\colon \group{G}\times M\to M$ of a connected Lie group $\group{G}$. Let $M$ be now equipped with a 
$\group{G}$-invariant star product 
$\star$, i.e. an associative product $\star = \mu + \sum_{r=1}^\infty 
\hbar^r C_r= \mu_0 +\hbar m_\star \in (\Dpoly^{1}(M))^\group{G}[[\hbar]]$. Recall that a linear map $H\colon \liealg{g} \to \Cinfty(M)[[\hbar]]$ is called a
\emph{quantum momentum map} if
\begin{align*}
  \Lie_{\xi_M}
  =
  -\frac{1}{\hbar}[H(\xi),\argument]_\star 
  \ \text{ and }\ 
  \frac{1}{\hbar}[H(\xi),H({\eta})]_\star
  =
  H([\xi,\eta]),
\end{align*}
where $\xi_M$ denotes the fundamental vector field corresponding to
the action $\Phi$.

A pair $(\star,H)$ consisting of an invariant star product 
$\star = \mu + \hbar m_\star$ and a quantum momentum map $H$ is also called \emph{equivariant star product}. They are useful since they 
allow for a BRST like reduction scheme, compare 
Appendix~\ref{sec:BRSTReductionStarProducts}. We introduce now 
the DGLA that contains the data of Hamiltonian actions, 
i.e. of equivariant star products. Here we follow \cite{tsygan:note}.
\begin{definition}[Equivariant polydifferential operators]
  The DGLA of \emph{equivariant polydifferential operators} 
	$(D^\bullet_{\liealg{g}}(M),\del^\liealg{g},[\argument,\argument]_\liealg{g})$ 
	is defined by
  \begin{align}
    D^k_{\liealg{g}}(M)
    = 
    \bigoplus_{2i+j=k} (\Sym^i\liealg{g}^* \tensor D_{\mathrm{poly}}^j(M))^\group{G}
  \end{align}
  with bracket 
  \begin{align}
    [\alpha \otimes D_1,\beta \otimes D_2]_\liealg{g}
    =
    \alpha \vee \beta \otimes [D_1,D_2]_G
  \end{align} 
  and differential 
  \begin{align}
    \partial^\liealg{g} (\alpha \otimes D_1)
    =
		\alpha \otimes \del D_1
		=
    \alpha \otimes [\mu,D_1]_G
  \end{align}
	for $\alpha\otimes D_1,\beta \otimes D_2 \in D^\bullet_\liealg{g}(M)$. 
  Here we denote by $\partial $ and $[\argument,\argument]_G$ the usual Hochschild 
	differential and Gerstenhaber bracket on the polydifferential operators 
	and by $\mu$ the pointwise multiplication of $\Cinfty(M)$.
\end{definition}

Notice that invariance with respect to the group action means
invariance under the transformations $\Ad_g^*\tensor \Phi_g^*$ for all
$g\in G$, and that the equivariant polydifferential operators can be interpreted as 
equivariant polynomial maps $\liealg{g} \to  D_{\mathrm{poly}}(M)$. 
We introduce the canonical linear map
\begin{align*}
  \lambda
  \colon
  \liealg{g} \ni \xi \longmapsto \Lie_{\xi_M}\in D^0_{\mathrm{poly}}(M),
\end{align*}	
and see that $\lambda \in D^2_{\liealg{g}}(M)$ is central and moreover
$\partial^\liealg{g} \lambda = 0$. This implies that we can see $D^\bullet_\liealg{g}(M)$
either as a flat DGLA with the above structures or as a curved DGLA with 
the above structures and curvature $\lambda$.
In the case of formal power series we rescale 
the curvature again by $\hbar^2$ and obtain the following 
characterization of Maurer-Cartan elements:
\begin{lemma}\label{Lem: MCtoStar} 
  A curved formal Maurer-Cartan element $\Pi\in \hbar D^1_{\liealg{g}}(M)[[\hbar]]$, 
	i.e. an element $\Pi$ satisfying
  \begin{align}
    \label{eq:MCquantum}
    \hbar^2\lambda + \partial^\liealg{g} \Pi + \frac{1}{2}[\Pi,\Pi]_\liealg{g}
    =
    0,
  \end{align}
  is equivalent to a pair $(m_\star,H)$, where $m_\star 
	\in D_{\mathrm{poly}}^1 (M))^\group{G}[[\hbar]]$ defines a 
	$\group{G}$-invariant star product via $\star = \mu+\hbar m_\star$ 
	with quantum momentum map 
	$H\colon \liealg{g} \to \Cinfty(M)[[\hbar]]$. In other words, $(\star, H)$ is an 
	equivariant star product.
\end{lemma}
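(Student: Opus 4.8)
The plan is to unpack the curved Maurer--Cartan equation~\eqref{eq:MCquantum} degree by degree in the decomposition $D^1_{\liealg{g}}(M) = \bigoplus_{2i+j=1}(\Sym^i\liealg{g}^* \tensor D_{\mathrm{poly}}^j(M))^\group{G}$, and to match the resulting components against the two defining conditions of a quantum momentum map together with the associativity of $\star = \mu + \hbar m_\star$. Writing $\Pi = \hbar\, m_\star + (\text{terms in } \liealg{g}^*\tensor D^{-1}_{\mathrm{poly}})$, the two summands with $2i+j=1$ are $i=0,\,j=1$, giving the genuine bidifferential operator part $m_\star \in (D^1_{\mathrm{poly}}(M))^\group{G}[[\hbar]]$, and $i=1,\,j=-1$, giving an element of $(\liealg{g}^* \tensor \Cinfty(M))^\group{G}[[\hbar]]$, which is exactly the data of a linear map $H\colon \liealg{g} \to \Cinfty(M)[[\hbar]]$ (one uses $D^{-1}_{\mathrm{poly}}(M) \cong \Cinfty(M)$ and that $\group{G}$-invariance of the $\liealg{g}^*$-valued function is the equivariance $\Ad_g^* \tensor \Phi_g^*$). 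So at the level of underlying vector spaces the identification of $\Pi$ with a pair $(m_\star, H)$ is immediate; the content is showing that the single equation~\eqref{eq:MCquantum} is equivalent to the conjunction of associativity and the two momentum-map axioms.

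First I would compute each graded piece of the left-hand side of~\eqref{eq:MCquantum}. Since the curvature $\hbar^2\lambda$ lives in $\Sym^1\liealg{g}^* \tensor D^0_{\mathrm{poly}}(M)$ (degree $2i+j=2$) while $\partial^\liealg{g}\Pi + \tfrac12[\Pi,\Pi]_\liealg{g}$ has components in degrees $2$, I would separate the bracket $[\Pi,\Pi]_\liealg{g} = [\alpha\vee\beta]\tensor[\,\cdot\,,\cdot\,]_G$ according to how many $\liealg{g}^*$-factors each argument carries. The pure $j$-part (no $\liealg{g}^*$ factors, $i=0$) of~\eqref{eq:MCquantum} reads $\partial(\hbar m_\star) + \tfrac12[\hbar m_\star, \hbar m_\star]_G = 0$, which is precisely the Maurer--Cartan equation in $\Dpoly(M)$ for $\hbar m_\star$, i.e. associativity of $\star = \mu + \hbar m_\star$ (here I use $\del = [\mu,\argument]_G$ and the standard fact that $\mu + \hbar m_\star$ is associative iff $\hbar m_\star$ solves the Hochschild--Gerstenhaber MC equation). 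The mixed part carrying exactly one $\liealg{g}^*$-factor collects the curvature term $\hbar^2\lambda$, the differential $\partial^\liealg{g}$ applied to the $H$-component, and the cross bracket between $m_\star$ and $H$; unwinding $\lambda(\xi) = \Lie_{\xi_M}$ and the Gerstenhaber bracket $[m_\star, H(\xi)]_G$ against the $\star$-commutator should reproduce the first momentum-map axiom $\Lie_{\xi_M} = -\tfrac1\hbar[H(\xi),\argument]_\star$. The part with two $\liealg{g}^*$-factors comes solely from $\tfrac12[\,\cdot\,,\cdot\,]_\liealg{g}$ applied to the $H$-component with itself, together with the Lie-bracket pairing on $\Sym^2\liealg{g}^*$, and should yield the second axiom $\tfrac1\hbar[H(\xi),H(\eta)]_\star = H([\xi,\eta])$.

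The main obstacle I expect is bookkeeping rather than conceptual: correctly tracking the Koszul signs from the shift $\liealg{g}[1]$ and from the symmetric versus graded-antisymmetric structures, and verifying that the $\group{G}$-invariance condition $\Ad_g^* \tensor \Phi_g^*$ on $\Pi$ translates into $\group{G}$-equivariance of $H$ and $\group{G}$-invariance of $\star$ on the nose. In particular, the appearance of the structure constants of $\liealg{g}$ in the second momentum-map axiom must be matched against the symmetric-algebra pairing used implicitly when $\lambda = e^i \tensor (e_i)_M$ and the $H$-component are paired; I would verify this by evaluating on basis elements $e_i, e_j$ and checking that the Lie bracket $[\xi,\eta]$ emerges with the correct sign and factor. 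Once each graded component is matched, the equivalence follows because~\eqref{eq:MCquantum} holds iff all its graded components vanish, and these components are in bijective correspondence with associativity of $\star$ and the two momentum-map axioms. Finally I would note that the rescaling of the curvature by $\hbar^2$ and the normalization $\Pi \in \hbar D^1_{\liealg{g}}(M)[[\hbar]]$ are exactly what makes the $\hbar$-powers balance in each component, guaranteeing that the reconstructed $H = \sum_{r\geq 0}\hbar^r J_r$ starts in the correct order.
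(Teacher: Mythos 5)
Your decomposition of $\Pi$ and your treatment of the first two graded pieces match the paper's proof: the $(\Sym^0\liealg{g}^*)$-component of \eqref{eq:MCquantum} is the flat Maurer--Cartan equation $\hbar\del m_\star + \tfrac{\hbar^2}{2}[m_\star,m_\star]_G=0$ (associativity of $\star$), and the $(\Sym^1\liealg{g}^*)$-component collects $\hbar^2\lambda$ and the cross bracket $-\hbar^2[m_\star,H(\argument)]_G$ (the term $\del^\liealg{g}$ applied to the $H$-part vanishes since $\mu$ is commutative), which is exactly the first momentum-map axiom $\Lie_{\xi_M}=-\tfrac{1}{\hbar}[H(\xi),\argument]_\star$.

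However, your plan for the second axiom contains a genuine gap. You claim that the component of \eqref{eq:MCquantum} ``with two $\liealg{g}^*$-factors'' comes from $\tfrac12[\Pi,\Pi]_\liealg{g}$ applied to the $H$-part with itself, ``together with the Lie-bracket pairing on $\Sym^2\liealg{g}^*$,'' and yields $\tfrac1\hbar[H(\xi),H(\eta)]_\star=H([\xi,\eta])$. This component is identically zero: the bracket on $D_\liealg{g}(M)$ is $[\alpha\otimes D_1,\beta\otimes D_2]_\liealg{g}=\alpha\vee\beta\otimes[D_1,D_2]_G$, with \emph{no} term involving the Lie bracket of $\liealg{g}$ acting on $\Sym\liealg{g}^*$ (this is not a Chevalley--Eilenberg-type structure), and the Gerstenhaber bracket of two elements of $D^{-1}_{\mathrm{poly}}(M)\cong\Cinfty(M)$ vanishes, since a function admits no insertion slots; equivalently, the would-be component lands in $\Sym^2\liealg{g}^*\otimes D^{-2}_{\mathrm{poly}}(M)=0$. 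Hence the structure constants you expect to ``emerge'' cannot appear, and the Maurer--Cartan equation alone only encodes associativity and the first axiom. The second axiom must instead be extracted from the datum you relegated to bookkeeping: the $\group{G}$-invariance of $\Pi$ under $\Ad_g^*\tensor\Phi_g^*$ means $H$ is $\group{G}$-equivariant, which for the connected group $\group{G}$ differentiates to $-\Lie_{\xi_M}H(\eta)=H([\xi,\eta])$; combining this with the already established first axiom gives $\tfrac1\hbar[H(\xi),H(\eta)]_\star=H([\xi,\eta])$. This is precisely how the paper concludes (``the invariance of both elements implies\dots''), and without this step your argument does not prove that $H$ is a quantum momentum map; conversely, for the direction from $(\star,H)$ back to $\Pi$, the same equivariance is needed to see that $\Pi$ actually lies in the invariant space $D^1_\liealg{g}(M)$.
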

\begin{proof}
  Let us decompose $\Pi=\hbar m_\star - \hbar H \in \hbar (
  D_{\mathrm{poly}}^1 (M))^\group{G} \oplus (\liealg{g}^* \tensor
  D_{\mathrm{poly}}^{-1}(M))^\group{G}[[\hbar ]]$. Then the curved 
	Maurer-Cartan equation applied to an element $\xi \in \liealg{g}$ reads 
  	\begin{align*}
  	-\hbar^2 \Lie_{\xi_M}
		&=-\hbar^2\lambda(\xi)
  	=
  	\partial^\liealg{g} \Pi(\xi) + \frac{1}{2}[\Pi,\Pi]_\liealg{g}(\xi)\\&
  	=
  	\hbar [\mu,m_\star]_G+ \frac{\hbar^2}{2}[m_\star,m_\star]_G- \hbar^2[m_\star, H(\xi)]_G.
	\end{align*}
This is equivalent to the fact that $\hbar m_\star$ is Maurer-Cartan in the flat setting 
and that  $\Lie_{\xi_M}=-\frac{1}{\hbar}[H(\xi),-]_\star$, since $\hbar[m_\star, H(\xi)]_G(f)
=-[H(\xi),f]_{\star}$ for $f\in \Cinfty(M)$. Then the invariance of both elements implies 
that $\star=\mu+\hbar m_\star$ is a $\group{G}$-invariant star product with quantum 
momentum map $H$. 
\end{proof}
Two equivariant star products $\hbar (m_\star-H)$ and $\hbar(m_\star' - H')$ are 
called \emph{equivariantly equivalent} if they are gauge equivalent, i.e. if 
there exists an $\hbar T\in \hbar D_\poly^0(M)^{\group{G}}[[\hbar]] \subset 
D^0_\liealg{g}(M)$ such that
\begin{equation*}
  \hbar(m_\star' - H')
	=
	\exp(\hbar[T,\argument]_{\liealg{g}}) \acts \hbar (m_\star-H) 
	=
  \exp(\hbar[T,\argument]_{\liealg{g}})(\mu+ \hbar (m_\star-H)) - \mu.
\end{equation*}
This means that $S = \exp(\hbar T)$ satisfies for all $f,g\in\Cinfty(M)[[\hbar]]$
\begin{equation*}
  S(f\star g)
	=
	Sf \star' Sg
	\quad\text{ and }\quad
	SH
	=
	H'.
\end{equation*}

%
%
\section{Reduction of the Equivariant Polydifferential Operators}
\label{sec:RedDpoly}

Now we aim to describe a reduction scheme for general equivariant polydifferential operators via an $L_\infty$-morphism denoted by $\mathrm{D}_\red$,  generalizing the results for the 
polyvector fields from \cite{esposito.kraft.schnitzer:2020a:pre}.

Let $M$ be a smooth manifold with 
action $\Phi \colon \group{G} \times M \rightarrow M$ of a connected Lie group 
and let $(\star, H= J + \hbar J')$ be an equivariant star product, i.e. a curved 
formal Maurer-Cartan element in the equivariant polydifferential operators, 
see Lemma~\ref{Lem: MCtoStar}. 
Here the component $J\colon M \rightarrow \liealg{g}^*$ of the quantum momentum map $H$ 
in $\hbar$-order zero is a classical momentum map with respect to the Poisson structure 
induced by the skew-symmetrization of the $\hbar^1$-part of $\star$. 
We assume from now on that $0 \in \liealg{g}^*$ is a value and a regular value of 
$J$ and set $C= J^{-1}(\{0\})$. In addition, we require the action to be proper
 around $C$ and free on $C$. 
Then $M_\red = C/\group{G}$ is a smooth manifold and we denote 
by $\iota \colon C \rightarrow M$ the inclusion and by $\pr \colon C 
\rightarrow M_\red$ the projection on the quotient.
Moreover, the properness around $C$ implies that there exists an
  $\group{G}$-invariant open neighbourhood $M_\nice \subseteq M$ of $C$ and a 
  $\group{G}$-equivariant diffeomorphism $\Psi \colon M_\nice
  \rightarrow U_\nice \subseteq C\times \liealg{g}^*$, where $U_\nice$ is an open 
	neighbourhood of $C\times\{0\}$ in $C\times \liealg{g}^*$. Here the Lie group $\group{G}$ 
	acts on $C \times \liealg{g}^*$ as $\Phi_g= \Phi^C_g\times \Ad^*_{g^{-1}}$, 
	where $\Phi^C$ is the induced action on $C$, and the momentum map on $U_\nice$ is 
	the projection to $\liealg{g}^*$ (see \cite[Lemma~3]{bordemann.herbig.waldmann:2000a}, \cite{gutt.waldmann:2010a})

From now on we assume $M=M_\nice$. Then we can 
define an equivariant \emph{prolongation map}  
by 
\begin{equation*}
  \prol \colon 
	\Cinfty(C) \ni 
	\phi
	\longmapsto
	(\pr_1 \circ \Psi)^* \phi \in
	\Cinfty(M_\nice)
\end{equation*}
and we directly get $\iota^*\prol = \id_{\Cinfty(C)}$.

Consider the Taylor expansion around $C$ 
in $\liealg{g}^*$-direction
as in 
\cite[Section 4.1]{esposito.kraft.schnitzer:2020a:pre},
 which is a map
\begin{align*}
  D_{\liealg{g}^*}\colon
  \Dpoly^k(C\times \liealg{g}^*)
  \longmapsto
  \prod_{i=0}^\infty(\Sym^i\liealg{g}\tensor T^{k+1}(\Sym \liealg{g}^*)\tensor\Dpoly^k(C)),
\end{align*}
where $T^\bullet(\Sym \liealg{g}^*)$ denotes the tensor module of 
$\Sym\liealg{g}^*$. Note that we are only interested in a 
subspace since we consider polydifferential operators vanishing 
on constants. Slightly abusing the notation, the Taylor expansion of the equivariant polydifferential operators takes then the following form 
\begin{equation}
	\label{eq:TaylorDpoly}
	D_\Tay(C\times \liealg{g}^*)
	=
	\left( \Sym \liealg{g}^* \otimes 
	\prod_{i=0}^\infty (\Sym^i \liealg{g} \otimes 
	T(\Sym \liealg{g}^*) 	\otimes \Dpoly(C))
	\right)^\group{G}
\end{equation}
and one easily checks that this yields an equivariant DGLA morphism 
\begin{align}
  D_{\liealg{g}^*}\colon 
  (D_\liealg{g}(M),\lambda,\del^\liealg{g},[\argument,\argument]_\liealg{g}) 
  \longrightarrow 
  (D_\Tay(C\times \liealg{g}^*),\lambda,\del,	[\argument,\argument]).
\end{align}
Our goal consists in finding a reduction morphism from
\begin{equation}
	\label{eq:curvedTaylorDpoly}
	D_\red \colon
  \left(D_\Tay(C\times \liealg{g}^*)[[\hbar]], \hbar\lambda,
	\del +[-J,\argument],[\argument,\argument] \right)
  \longrightarrow
	(D_\poly(M_\red)[[\hbar]],\del,[\argument,\argument]_G).
\end{equation}
Following a similar strategy as in \cite{esposito.kraft.schnitzer:2020a:pre}, 
we construct $L_\infty$-morphisms
\begin{equation*}
  D_\Tay(C\times \liealg{g}^*)[[\hbar]] 
	\longrightarrow 
	\left( \prod_{i=0}^\infty (\Sym^i \liealg{g} \otimes 
	\Dpoly(C))\right)^\group{G}[[\hbar]]
	\longrightarrow
	D_\poly(M_\red)[[\hbar]]
\end{equation*}
with suitable $L_\infty$-structures on the three spaces, 
where $\left( \prod_{i=0}^\infty (\Sym^i \liealg{g} \otimes \Dpoly(C))\right)^\group{G}[[\hbar]]$ is a candidate for a 
Cartan model.

%
%
%

%
%
%
\subsection{A 'partial' Homotopy for the Hochschild Differential}

In order to find a suitable analogue of the Cartan model for the polydifferential 
operators, we need to understand the cohomology of 
\begin{equation*}
  (D_{\liealg{g}}(M), \del^{\liealg{g}} - [J,\argument]_{\liealg{g}}, 
	[\argument,\argument]_{\liealg{g}})
\end{equation*}
and in particular the role of the differential $[-J,\argument]_{\liealg{g}}$. 
To this end we 
construct a 'partial' homotopy for $\del^{\liealg{g}} -[J,\argument]_{\liealg{g}}$. 
Here we use the results concerning
the homotopy 
for the Hochschild differential from \cite{dewilde.lecomte:1995a}. 
In particular, we restrict ourselves to the subspace of 
normalized differential Hochschild cochains, i.e. polydifferential 
operators vanishing on constants. One can show that they are 
quasi-isomorphic to the differential ones.
Recall the maps
\begin{align}
  \begin{split}
  \label{eq:PhiGeneral}
	\Phi \colon \Dpoly^a(M) 
	& \longrightarrow 
	\Dpoly^{a-1}(M) \\
	\Phi(A)(f_0,\dots,f_{a-1})
	& =
	\sum_{t=1}^n\sum_{i\leq j<a}(-1)^i A\left(f_0,\dots,f_{i-1},x^t,\dots,
	\frac{\del}{\del x^t} f_j,\dots, f_{a-1}\right)
	\end{split}
\end{align}
for $f_1,\dots,f_{a-1} \in \Cinfty(M)$ and
\begin{equation}
  \label{eq:PsiGeneral}
	\Psi \colon \Dpoly^a(M) \ni A
	\longmapsto
	(-1)^{a} [x^i,A]_G \cup \frac{\del}{\del x^i}
	=
	(-1)^{a+1} \sum_{i=1}^n (A \circ x^i) \cup \frac{\del}{\del x^i} \in 
	\Dpoly^a(M).
\end{equation}
for local coordinates $(x^1,\dots,x^n)$ of $M$. They satisfy, by 
\cite[Proposition~4.1]{dewilde.lecomte:1995a}, the following condition:
\begin{equation*}
  \Phi \circ \del + \del \circ \Phi 
	=
	-(\deg_D \cdot \id + \Psi),
\end{equation*}
where $\deg_D$ is the order of the differential operator. 

We assume from now on for simplicity $M= C \times \liealg{g}^*$ and 
$J= \pr_{\liealg{g}^*}$ and we want 
to find a suitable Cartan model for the polydifferential operators. 
Similarly to \cite[Definition~4.14]{esposito.kraft.schnitzer:2020a:pre} 
for the polyvector field case, 
we want to obtain a DGLA structure on
\begin{equation*}
  \left( \prod_{i=0}^\infty( \Sym^i \liealg{g} \otimes \Dpoly(C))\right)^\group{G}.
\end{equation*}
Hence we adapt the maps $\Phi$ and $\Psi$ in such a way that they only 
include coordinates $J_i = \alpha_i = e_i$ on $\liealg{g}^*$ with $i=1,\dots,n$:
\begin{align}
\label{eq:DefPhiiPsi}
\begin{split}
  \Phi(A)(f_0,\dots,f_{a-1})
	& =
	\sum_{t=1}^n\sum_{i \leq j<a}(-1)^i A\left(f_0,\dots,f_{i-1},e_t,\dots,
	\frac{\del}{\del e_t} f_j,\dots, f_{a-1}\right), \\
	\Psi(A)
	& =
	(-1)^{a+1} \sum_{i=1}^n (A \circ e_i) \cup \frac{\del}{\del e_i},
\end{split}
\end{align}
where $A\in \Dpoly^a(C\times\liealg{g}^*) $ and 
$f_0,\dots, f_{a-i} \in \Cinfty(C\times\liealg{g}^*)$. 

\begin{proposition}
  One has on $\Dpoly(C\times\liealg{g}^*)$
	\begin{equation}
	  \label{eq:Phidel}
	  \Phi \circ \del + \del \circ \Phi
		=
		- (\deg_\liealg{g}\cdot \id +\Psi),
	\end{equation}
	where $\deg_\liealg{g}$ is the order of differentiations in direction 
	of $\liealg{g}^*$-coordinates.
\end{proposition}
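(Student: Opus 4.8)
The plan is to recognize \eqref{eq:Phidel} as a \emph{partial} version of the de~Wilde--Lecomte homotopy formula \cite[Proposition~4.1]{dewilde.lecomte:1995a}, in which only the $\liealg{g}^*$-coordinates $e_1,\dots,e_n$ enter the operators $\Phi$ and $\Psi$ of \eqref{eq:DefPhiiPsi}, and in which the total differential order $\deg_D$ is accordingly replaced by the order $\deg_\liealg{g}$ of differentiations in $\liealg{g}^*$-direction. The structural observation driving the proof is that $\Phi$ and $\Psi$ are built as \emph{sums over single coordinate directions}, while the Hochschild differential $\del = [\mu,\argument]_G$ is coordinate-free. I would therefore first decompose $\Phi = \sum_{t=1}^n \Phi_t$ and $\Psi = \sum_{t=1}^n \Psi_t$ into their single-coordinate pieces, where $\Phi_t$ and $\Psi_t$ are the summands of \eqref{eq:DefPhiiPsi} involving only the coordinate $e_t$ and the derivative $\frac{\del}{\del e_t}$. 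Note that all three operators $\Phi$, $\Psi$, $\del$ preserve the subspace of cochains vanishing on constants, so the identity may be checked there.

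The core of the argument is the single-coordinate identity
\begin{equation*}
  \Phi_t \circ \del + \del \circ \Phi_t
	=
	-(\deg_t \cdot \id + \Psi_t),
\end{equation*}
where $\deg_t$ counts the order of differentiation in the $e_t$-direction only. Since $\Phi_t$ and $\Psi_t$ reference the single coordinate $e_t$ while $\del$ references no coordinates at all, the computation of $\Phi_t \del + \del \Phi_t$ never couples distinct coordinate directions; hence the de~Wilde--Lecomte computation localizes to a single $t$ and is additive over the chosen directions. Concretely, I would apply $\Phi_t\del + \del\Phi_t$ to a homogeneous $A \in \Dpoly^a(C\times\liealg{g}^*)$ and expand using the explicit Hochschild differential. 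The \emph{interior} contributions --- those in which the inserted factor $e_t$ and the derivative $\frac{\del}{\del e_t}$ act on the same argument --- telescope and, via the Leibniz rule $\frac{\del}{\del e_t}(e_t g) = g + e_t \frac{\del}{\del e_t} g$, collapse to the number operator $-\deg_t\cdot\id$. The \emph{boundary} contributions --- those in which $e_t$ meets the outermost multiplications produced by $\del$ --- reassemble precisely into $-\Psi_t$, matching the definition $\Psi_t(A) = (-1)^{a+1}(A\circ e_t)\cup\frac{\del}{\del e_t}$ from \eqref{eq:DefPhiiPsi}.

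Finally, I would sum the single-coordinate identity over $t=1,\dots,n$. Because $\sum_{t=1}^n \Phi_t = \Phi$, $\sum_{t=1}^n \Psi_t = \Psi$ and $\sum_{t=1}^n \deg_t = \deg_\liealg{g}$ by the very definition of the order of differentiation in $\liealg{g}^*$-direction, this yields \eqref{eq:Phidel}. I expect the main obstacle to be the bookkeeping inside the single-coordinate identity: one must verify that the signs $(-1)^i$ in \eqref{eq:DefPhiiPsi} together with the signs of $\del$ conspire so that the interior terms genuinely telescope to $\deg_t\cdot\id$, and that the leftover endpoint terms assemble into $\Psi_t$ with exactly the sign $(-1)^{a+1}$. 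Everything else --- the decomposition of $\Phi,\Psi$ and the final summation --- is purely formal once this localized identity is established.
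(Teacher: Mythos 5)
Your proposal is correct in substance, but it takes a genuinely different route from the paper. The paper argues by induction on the Gerstenhaber degree $a$: the case $A\in\Dpoly^0(C\times\liealg{g}^*)$ is a short Leibniz computation, and the inductive step never expands a general cochain --- instead it establishes a twisted Leibniz rule for $\Phi\circ\del+\del\circ\Phi$ on cup products $A\cup B$, a matching compatibility of $\Psi$ with $\cup$, and then identifies the correction operators $\ins(e_i)\circ\del+\del\circ\ins(e_i)$ and $\del\circ\ins(\frac{\del}{\del e_i})-\ins(\frac{\del}{\del e_i})\circ\del$ as graded derivations of the $\cup$-product, hence determined by their values on the generators $\Dpoly^{-1}$ and $\Dpoly^0$. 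You instead split $\Phi=\sum_t\Phi_t$, $\Psi=\sum_t\Psi_t$, $\deg_\liealg{g}=\sum_t\deg_t$ and attack the single-coordinate identity by direct expansion and telescoping. Your reduction to a single coordinate is sound --- it is in fact implicit in the paper, whose displayed identities hold for each fixed $i$ separately and are summed over $i$ --- and it is a clean way of explaining why a ``partial'' de~Wilde--Lecomte formula holds at all. What you give up is that the entire analytic content of the proposition now sits in the deferred bookkeeping: the mutual cancellation of cross terms in which $\del$ merges arguments away from both $e_t$ and the differentiated slot, the telescoping over the insertion position whose surviving residue is the merged term $A(\dots,e_t\frac{\del}{\del e_t}f_j,\dots)$, and the assembly of the outermost terms into $-\Psi_t$ with exactly the sign $(-1)^{a+1}$. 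Note also that the number operator $-\deg_t\cdot\id$ arises from the Leibniz rule applied \emph{inside} the operator $A$ (i.e. $A(\dots,e_tg,\dots)=e_tA(\dots,g,\dots)+{}$terms where derivatives of $A$ hit $e_t$), not merely from the first-order rule $\frac{\del}{\del e_t}(e_tg)=g+e_t\frac{\del}{\del e_t}g$ on functions; this is where the counting comes from, as one sees already in the paper's degree-zero computation. This head-on combinatorics is exactly where sign errors live and is the work the paper's derivation-on-generators trick is designed to bypass; if your expansion becomes unmanageable, the paper's argument applies verbatim for each fixed $t$, after which your summation over $t$ finishes the proof unchanged.
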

\begin{proof}
The proof follows the same lines as in \cite[Proposition~4.1]{dewilde.lecomte:1995a}. 
It is proven by induction on the degree of $a$ of $A\in \Dpoly^a(C\times\liealg{g}^*)$. 
For $a=0$ and $A \in D_\poly^0(C\times\liealg{g}^*)$ as well as 
$f \in \Cinfty(C\times\liealg{g}^*)$ we get
\begin{align*}
  ((\Phi \circ \del + \del \circ \Phi)(A))(f)
	& =
	(\del A)\left(e_i, \frac{\del}{\del e_i}f\right)
	=
	e_i A\left(\frac{\del}{\del e_i}f\right) - A\left(e_i\frac{\del}{\del e_i}f\right)
	+ A(e_i)\frac{\del}{\del e_i}f  \\
	& = 
	(-\deg_\liealg{g}(A)\, A - \Psi(A))(f).
\end{align*}
Note that $\Psi$ has the following compatibility with the $\cup$-product:
\begin{equation*}
  \Psi(A\cup B)
	=
	(\Psi A)\cup B + A \cup (\Psi B)
	+
	(-1)^a (A \circ e_i)\cup \left( \frac{\del}{\del e_i}\cup B + 
	(-1)^b B \cup \frac{\del}{\del e_i}\right).
\end{equation*}
Writing $\ins(A)(\argument)= (\argument)\circ A$ one computes
\begin{align*}
  (\Phi \circ \del + \del \circ \Phi)(A\cup B)
	& = 
	((\Phi \circ \del + \del \circ \Phi)A)\cup B + 
	A \cup ((\Phi \circ \del + \del \circ \Phi)B) \\
	& + 
	((\ins(e_i)\circ \del + \del \circ \ins(e_i))A) \cup 
	\ins\left( \frac{\del}{\del e_i} \right)B  \\
	& +
	(-1)^a (\ins(e_i)A)\cup (\del\circ \ins\left(\frac{\del}{\del e_i}\right) - 
	\ins\left(\frac{\del}{\del e_i}\right)\circ \del)B.
\end{align*}
The operators $(\ins(e_i)\circ \del + \del \circ \ins(e_i))$ 
and $(\del\circ \ins\left(\frac{\del}{\del e_i}\right) - 
\ins\left(\frac{\del}{\del e_i}\right)\circ \del)$ are graded commutators of 
derivations of the $\cup$-product and therefore themselfes graded derivations. 
Thus they are determined by their action on $\Dpoly^{-1}(C\times\liealg{g}^*)$ 
and $\Dpoly^0(C\times\liealg{g}^*)$. The first one obviously vanishes. 
The second coincides on these generators with 
\begin{equation*}
  A
	\longmapsto
	- \left( \frac{\del}{\del e_i}\cup A +(-1)^a A\cup \frac{\del}{\del e_i}\right)
\end{equation*}
and the proposition is shown.
\end{proof}
As in \cite{esposito.kraft.schnitzer:2020a:pre}, we
define a homotopy on the equivariant 
polydifferential operators
\begin{equation*}
  \widehat{h} \colon
	(\Sym\liealg{g}^* \otimes\Dpoly^d(C\times\liealg{g}^*))^\group{G} \ni P \otimes D
	\longmapsto 
	(-1)^{d+1} \inss(e_i)P \otimes D \cup \frac{\del}{\del e_i}
	\in (\Sym\liealg{g}^* \otimes\Dpoly^{d+1}(C\times\liealg{g}^*))^\group{G} .
\end{equation*}
The fact that $\widehat{h} $ maps invariant elements to invariant ones follows as 
in the case of polyvector fields. Finally, note that $\Phi$ and $\Psi$ are equivariant, 
whence they can be extended to the equivariant polydifferential operators, where we can show:

\begin{proposition}
  One has on $(\Sym\liealg{g}^* \otimes\Dpoly(C\times\liealg{g}^*))^\group{G}$
	\begin{equation}
	  \left[\widehat{h} -\Phi, \del^{\liealg{g}} + [-J,\argument]_{\liealg{g}}\right]
		=
		(\deg_{\Sym\liealg{g}^*} + \deg_\liealg{g}) \id,
	\end{equation}
	where $\deg_\liealg{g}$ is again the order of differentiations in direction 
	of $\liealg{g}^*$-coordinates.
\end{proposition}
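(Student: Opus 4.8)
The plan is to expand the graded commutator bilinearly along the two splittings, of the differential into $\del^\liealg{g}$ and $[-J,\argument]_\liealg{g}$ and of the homotopy into $\widehat{h}$ and $-\Phi$, obtaining four terms. Since $\widehat{h}$ and $\Phi$ are of degree $-1$ while $\del^\liealg{g}$ and $[-J,\argument]_\liealg{g}$ are of degree $+1$, each of these graded commutators is in fact an anticommutator. As a preliminary I record the bracket on the $\Dpoly(C\times\liealg{g}^*)$-factor: writing $J=\sum_i e^i\otimes e_i$ and using \eqref{eq:GerstenhaberBracketClassical} with $\abs{e_i}=-1$ gives $[e_i,D]_G=-(D\circ e_i)=-\ins(e_i)(D)$, and hence
\begin{equation*}
  [-J,P\otimes D]_\liealg{g}
  =
  \sum_i (e^i\vee P)\otimes \ins(e_i)(D),
\end{equation*}
where $\ins(e_i)(\argument)=(\argument)\circ e_i$. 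Note that $\Phi$, $\Psi$ and $\del^\liealg{g}$ act only on the $\Dpoly$-factor, whereas $\widehat{h}$ and $[-J,\argument]_\liealg{g}$ also act on the outer $\Sym\liealg{g}^*$-factor, by the contraction $\inss(e_i)$ and by the multiplication $e^i\vee$ respectively.

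The two terms involving $\del^\liealg{g}$ are the easy ones. Extending the identity $\Phi\circ\del+\del\circ\Phi=-(\deg_\liealg{g}\cdot\id+\Psi)$ just established by the identity on $\Sym\liealg{g}^*$ immediately yields $[\Phi,\del^\liealg{g}]=-(\deg_\liealg{g}\cdot\id+\Psi)$. For $[\widehat{h},\del^\liealg{g}]$ I would use that $\frac{\del}{\del e_i}$ is a derivation, hence a Hochschild cocycle $\del\frac{\del}{\del e_i}=0$; the Leibniz rule for $\del$ over $\cup$ then lets $\del$ pass through $\cup\frac{\del}{\del e_i}$ unchanged, and the two orderings cancel because $\widehat{h}$ carries the sign $(-1)^{d+1}$ on $\Dpoly^d$ and $(-1)^{d}$ on $\Dpoly^{d+1}$. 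Thus $[\widehat{h},\del^\liealg{g}]=0$.

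The heart of the argument is the term $[\widehat{h},[-J,\argument]_\liealg{g}]$. Writing out both orderings, I would use two Leibniz-type rules: the derivation property $\inss(e_i)(e^j\vee P)=\delta_i^j\,P+e^j\vee\inss(e_i)P$ of the contraction on $\Sym\liealg{g}^*$, and the fact that $\ins(e_j)$ is a graded derivation of $\cup$ on $\Dpoly$ with $\frac{\del}{\del e_i}\circ e_j=\frac{\del}{\del e_i}(e_j)=\delta_i^j$. The terms of shape $(e^j\vee\inss(e_i)P)\otimes((D\circ e_j)\cup\frac{\del}{\del e_i})$ then occur in the two orderings with opposite signs $(-1)^d$ and $(-1)^{d+1}$ and cancel. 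The Kronecker delta produced by $\inss(e_i)(e^j\vee P)$ reassembles, through $\Psi(D)=(-1)^{d+1}\sum_i(D\circ e_i)\cup\frac{\del}{\del e_i}$, into $-\Psi$, while the Kronecker delta produced by $\frac{\del}{\del e_i}\circ e_j$ reassembles, through the Euler identity $\sum_i e^i\vee\inss(e_i)=\deg_{\Sym\liealg{g}^*}$, into $\deg_{\Sym\liealg{g}^*}\cdot\id$. Altogether $[\widehat{h},[-J,\argument]_\liealg{g}]=\deg_{\Sym\liealg{g}^*}\cdot\id-\Psi$.

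It remains to show $[\Phi,[-J,\argument]_\liealg{g}]=0$, and this is where I expect the real work to lie. Since $\Phi$ commutes with the multiplication $e^j\vee$ on $\Sym\liealg{g}^*$, this term reduces on the $\Dpoly$-factor to the anticommutator $\{\Phi,\ins(e_j)\}$, so it suffices to prove that $\Phi$ anticommutes with the insertion $\ins(e_j)$ on $\Dpoly(C\times\liealg{g}^*)$. Here both operators only insert $\liealg{g}^*$-coordinate data into argument slots; the non-colliding contributions cancel between the two orderings through the shuffle-type sign $(-1)^i$ in $\Phi$, and the one genuine interaction—when the derivative $\frac{\del}{\del e_t}$ inside $\Phi$ meets the $e_j$ inserted by $\ins$, giving $\frac{\del}{\del e_t}(e_j)=\delta^j_t$—also cancels between the two orderings. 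Granting this combinatorial cancellation, which together with the Koszul-sign bookkeeping of the previous paragraph is the main obstacle, the four contributions add up to
\begin{equation*}
  \left[\widehat{h}-\Phi,\del^\liealg{g}+[-J,\argument]_\liealg{g}\right]
  =
  0+(\deg_{\Sym\liealg{g}^*}\cdot\id-\Psi)+(\deg_\liealg{g}\cdot\id+\Psi)-0
  =
  (\deg_{\Sym\liealg{g}^*}+\deg_\liealg{g})\,\id,
\end{equation*}
as claimed.
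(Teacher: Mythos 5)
Your four-term expansion and your handling of three of the terms coincide with the paper's own proof: $[\Phi,\del^\liealg{g}]=-(\deg_\liealg{g}\cdot\id+\Psi)$ is quoted from the preceding proposition, $[\widehat{h},\del^\liealg{g}]=0$ follows from the Leibniz rule for $\del$ over $\cup$ together with $\del\frac{\del}{\del e_i}=0$, and $[\widehat{h},[-J,\argument]_\liealg{g}]=\deg_{\Sym\liealg{g}^*}\cdot\id-\Psi$ arises from exactly the two Kronecker-delta mechanisms you describe, with the cross terms cancelling in pairs. Up to that point your proposal is the paper's argument.

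The gap is in the remaining term $[\Phi,[-J,\argument]_\liealg{g}]=0$, where the mechanism you sketch is wrong. A collision between the derivative $\frac{\del}{\del e_t}$ inside $\Phi$ and the inserted coordinate $e_j$ can occur in only one of the two orderings: in $\ins(e_j)\circ\Phi(A)$ the coordinate $e_j$ enters as an argument of the operator $\Phi(A)$ and can therefore be hit by the derivative, whereas in $\Phi\circ\ins(e_j)(A)$ the coordinate $e_j$ sits inside the operator $\ins(e_j)(A)$ and $\Phi$ differentiates only the external arguments before they are fed in. So the collision terms have nothing to cancel against in the other ordering, and your claimed pairing cannot happen. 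The actual reason they die is the hypothesis the paper invokes at precisely this point and which you never use: one works throughout with normalized cochains, i.e.\ polydifferential operators vanishing on constants. The collision produces $\frac{\del}{\del e_t}(e_j)=\delta_t^j$, a constant occupying an argument slot of $A$, and such an evaluation vanishes by normalization. This hypothesis is not cosmetic: the collision terms carry distinct argument configurations (one position for $e_j$, a later position for the constant) and do not pair off among themselves, so without normalization $\Phi$ fails to anticommute with $\ins(e_j)$ and the proposition itself would be false. The cancellation of the genuinely disjoint, non-colliding terms between the two orderings is correct as you describe it; that sign check is also left to the reader by the paper.
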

\begin{proof}
From \eqref{eq:Phidel} we know $[\Phi,\del^{\liealg{g}}] = - (\deg_\liealg{g}\cdot \id +\Psi)$. 
In addition, one has for homogeneous $P\otimes D$
\begin{align*}
  \widehat{h}  \circ \del^{\liealg{g}} (P\otimes D)
	& =
	(-1)^{d +2} \inss(e_i)P \otimes ( \del D) \cup \frac{\del}{\del e_i} 
	=
	-(-1)^{d +1} \inss(e_i)P \otimes \del (D \cup \frac{\del}{\del e_i}) \\
	& =
	- \del^{\liealg{g}}\circ \widehat{h}  (P\otimes D).
\end{align*}
Moreover, since we consider only differential operators vanishing on constants 
one checks easily that also $ [\Phi, [-J,\argument]_{\liealg{g}}] 	=	0.$ Finally,
\begin{align*}
  [\widehat{h} ,[-J,\argument]_{\liealg{g}}] (P\otimes D)
	& =
	(-1)^d \inss(e_i)(e^j \vee P)\otimes 
	[- J_j, D] \cup \frac{\del}{\del e_i}  \\
	& \quad +
	(-1)^{d+1} e^j \vee \inss(e_i)P \otimes 
	\left[-J_j, D \cup \frac{\del}{\del e_i} \right]  \\
	& =
	- \Psi(P\otimes D) + (-1)^d e^j\vee \inss(e_i)P\otimes 
	[-J_j,D] \cup \frac{\del}{\del e_i} \\
	& \quad+
	(-1)^{d+1} e^j\vee \inss(e_i)P\otimes 
	[-J_j,D] \cup \frac{\del}{\del e_i}
	+\deg_{\Sym\liealg{g}^*}(P) \,P \otimes D \\
	& =
	(\deg_{\Sym\liealg{g}^*}\cdot \id -\Psi) P\otimes D .
\end{align*}
Thus the proposition is shown.
\end{proof}

The above contructions work also for the Taylor series expansion of the equivariant polydifferential operators, where we restrict ourselves again 
to polydifferential operators vanishing on constants. We slightly abuse the notation and 
denote them again by $D_\Tay(C\times\liealg{g}^*)$, compare 
\eqref{eq:TaylorDpoly}. Writing
\begin{align}
  \label{eq:totalhomotopy}
  h
	=
  \begin{cases}
  & \frac{1}{\deg_{\Sym\liealg{g}^*} + \deg_\liealg{g}} (\widehat{h}  - \Phi)
	\quad \quad \text{ if }
	\deg_{\Sym\liealg{g}^*}+ \deg_\liealg{g} \neq 0, \\
	&0  \quad \quad\quad \;\quad \; \hspace{2cm}\text{ else,}
	\end{cases}
\end{align}
we get the following result:

\begin{proposition}
  One has a deformation retract  
	\begin{equation}
	\label{eq:ClassicalRetractDTay}
	  \begin{tikzcd} 
      \left(\left( \prod_{i=0}^\infty (\Sym^i \liealg{g} \otimes 
			\Dpoly(C))\right)^\group{G}[[\hbar]],\del\right) 
			\arrow[rr, shift left, "i"] 
      &&  
			(D_\Tay(C\times \liealg{g}^*)[[\hbar]], \del + [-J,\argument]), 
			\arrow[loop right, 
	distance=3em, start anchor={[yshift=1ex]east}, end anchor={[yshift=-1ex]east}]{}{h}
			\arrow[ll, shift left, "p"]
	  \end{tikzcd}
  \end{equation}
	where $p$ and $i$ denote the obvious projection and inclusion. This means that one has 
	$pi=\id$ and $\id - ip = [h, \del + [-J,\argument]]$. 
	Moreover, the identities $hi=0 = ph$ hold.
\end{proposition}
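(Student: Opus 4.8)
The plan is to read this as a rescaled-homotopy (abstract Hodge) argument organised by the \emph{number operator} $N := \deg_{\Sym\liealg{g}^*} + \deg_\liealg{g}$. Abbreviating $D := \del + [-J,\argument]$ and $K := \widehat{h} - \Phi$, the previous proposition is exactly the operator identity $[K,D] = N$. The operator $N$ is diagonalisable with spectrum in $\{0,1,2,\dots\}$, and by construction $\ker N$ is the subspace with trivial outer $\Sym\liealg{g}^*$-factor and no $\liealg{g}^*$-differentiations, i.e. precisely $\big(\prod_{i=0}^\infty(\Sym^i\liealg{g}\otimes\Dpoly(C))\big)^\group{G}[[\hbar]]$. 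I would let $i$ be the inclusion of $\ker N$ and $p$ the projection onto it, so that $pi=\id$ is immediate and $ip$ is the spectral projection $\Pi_0$ onto $\ker N$. Reading the normalising factor in \eqref{eq:totalhomotopy} as acting according to the degree of the \emph{argument}, the homotopy becomes $h = K\,N^{-1}$ with $N^{-1}$ the pseudoinverse of $N$ (equal to $1/k$ on the $k$-eigenspace and to $0$ on $\ker N$); this is well defined on the completed product because $N$ acts block-diagonally with spectrum bounded below, and all maps are $\hbar$-linear.

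The first step is the purely formal fact that $N$ commutes with $D$. This follows from $[K,D]=N$ and $D^2=0$: a standard graded Jacobi computation gives $[D,N]=[D,[K,D]]=[[D,K],D]=[N,D]$ (using $[D,D]=2D^2=0$ and $[D,K]=N$), while antisymmetry gives $[D,N]=-[N,D]$, whence $[N,D]=0$. In particular $D$ preserves every eigenspace of $N$, and since $[-J,\argument]$ raises the outer $\Sym\liealg{g}^*$-degree by one whereas $\del$ preserves it, the degree-raising part must vanish on $\ker N$, so the induced differential there is just $\del$, matching the top complex. Because $D$ commutes with the block-diagonal $N^{-1}$, I can compute the homotopy relation directly: as $h$ and $D$ are both odd,
\begin{equation*}
  [h,D] = hD + Dh = KDN^{-1} + DKN^{-1} = (KD+DK)N^{-1} = N N^{-1} = \id - \Pi_0 = \id - ip,
\end{equation*}
which is the required identity.

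It remains to check the annihilation conditions $hi=0$ and $ph=0$. The first is automatic with the argument-side normalisation, since $i$ lands in $\ker N$, on which $N^{-1}$ vanishes, so $hi = KN^{-1}i = 0$. For $ph = \Pi_0 K N^{-1}$ it suffices to show that $K$ produces no $N=0$ component out of the positive part, i.e. that $\Pi_0 K$ vanishes on $\ker(N-k)$ for every $k\geq 1$. Here I would use two elementary degree counts: $\widehat{h}$ shifts $(\deg_{\Sym\liealg{g}^*},\deg_\liealg{g})$ by $(-1,+1)$ and hence preserves $N$, while $\Phi$ preserves $\deg_{\Sym\liealg{g}^*}$ and \emph{never lowers} $\deg_\liealg{g}$, because inserting the linear coordinate $e_t$ can absorb at most one $\liealg{g}^*$-derivative of the operator (higher orders annihilate the linear $e_t$) whereas the prepended $\partial/\partial e_t$ restores exactly one. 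Thus $K=\widehat{h}-\Phi$ never lowers $N$, so it maps the $N\geq 1$ part into itself and $\Pi_0 K$ kills it, giving $ph=0$. Equivariance of $\widehat{h}$, $\Phi$ and $\Psi$ ensures all of this restricts to the $\group{G}$-invariant power-series subspaces without change.

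The only genuinely non-formal point, and the step I expect to be the main obstacle, is this degree estimate for $\Phi$: one must verify that $\Phi$ cannot strictly decrease the order of $\liealg{g}^*$-differentiations, which is exactly what forbids $K$ from feeding a positive-degree element back into the Cartan model $\ker N$ and thereby secures $ph=0$. Everything else — $pi=\id$, the homotopy identity $\id - ip = [h,D]$, and $hi=0$ — is a formal consequence of $[K,D]=N$, $[N,D]=0$, and the argument-side placement of the normalising factor $1/N$.
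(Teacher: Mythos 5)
Your proposal is correct and follows essentially the same route the paper takes: the paper states this proposition without further proof as an immediate consequence of the preceding commutator identity $[\widehat{h}-\Phi,\del^{\liealg{g}}+[-J,\argument]_{\liealg{g}}]=(\deg_{\Sym\liealg{g}^*}+\deg_\liealg{g})\id$ together with the definition \eqref{eq:totalhomotopy} of $h$ as that homotopy rescaled by the degree operator, which is precisely the argument you spell out. Your additions — the formal Jacobi-identity derivation of $[N,D]=0$ from $D^2=0$, and the degree analysis of $\Phi$ (insertion of the linear coordinate $e_t$ absorbs exactly one $\liealg{g}^*$-derivative) securing $ph=0$ — are exactly the details the paper leaves implicit, and both are sound.
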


\begin{remark}
  Note that one has $h^2\neq 0$, i.e. the above retract is no special 
	deformation retract. However, by the results of 
	\cite[Remark~2.1]{huebschmann:2011a} we know that 
	this could also be achieved.
\end{remark}

The reduction works now in two steps: At first, we use the 
homological perturbation lemma from Lemma~\ref{lemma:homologicalperturbationlemma}
to deform the differential on 
$D_\Tay(C\times\liealg{g}^*)[[\hbar]]$, and in a second step we use the 
homotopy transfer theorem, see Theorem~\ref{thm:HTTJonas}, 
to extend the deformed projection 
to an $L_\infty$-morphism. This will possibly give us higher brackets on 
$\left( \prod_{i=0}^\infty (\Sym^i \liealg{g} \otimes 
			\Dpoly(C))\right)^\group{G}[[\hbar]]$ that 
we have to discuss.

\subsection{Application of the Homological Perturbation Lemma}

In our setting, the bundle $C \times \liealg{g} \rightarrow C$ can be equipped with the 
structure of a Lie algebroid since $\liealg{g}$ acts on $C$ by the 
fundamental vector fields. The bracket of this \emph{action Lie algebroid} 
is given by 
\begin{equation}
	\label{eq:BracketActionAlg}
	[\xi, \eta]_{C\times\liealg{g}}(p) 
	=
	[\xi(p),\eta(p)] - (\Lie_{\xi_C}\eta)(p) + 
	(\Lie_{\eta_C}\xi)(p)
\end{equation}
for $\xi,\eta \in \Cinfty(C,\liealg{g})$. The anchor is given by 
$\rho(p,\xi) = -\xi_C\at{p}$. In particular, one can check that 
$\pi_\KKS$ is the negative of the linear Poisson structure on its dual 
$C \times \liealg{g}^*$ in the convention of \cite{neumaier.waldmann:2009a}.

For Lie algebroids there is a well-known construction of universal 
enveloping algebras \cite{moerdijk.mrcun:2010a,neumaier.waldmann:2009a,rinehart:1963a}. It turns out that in our special case we get a 
simpler description of the universal enveloping algebra:

\begin{proposition}
\label{prop:poductonUniversalEnvelopingAlg}
  The universal enveloping algebra $\Universal(C\times \liealg{g})$ of 
	the action Lie algebroid $C\times \liealg{g}$ is isomorphic to 
	$\Cinfty(C) \rtimes \Universal(\liealg{g})$ with product 
  \begin{equation}
    (f,x) \cdot (g,y)
	  =
	  \sum \left(f \Lie(x_{(1)})(g) ,x_{(2)}y\right) .
  \end{equation}
	Here $ y_{(1)} \otimes y_{(2)}= \Delta(y)$ denotes the coproduct on 
	$\Universal(\liealg{g})$ induced by extending 
	$\Delta(\xi) = 1 \otimes \xi + \xi \otimes 1$ as an algebra morphism. 
	Moreover, 
	$\Lie \colon \Universal(\liealg{g}) \rightarrow \Diffop(\Cinfty(C))$ is 
	the extension of the anchor of the action algebroid, i.e. of the 
	negative fundamental vector fields, to the universal enveloping 
	algebra. The same holds also in the formal setting of 
	$\Universal_\hbar(\liealg{g})$ with bracket rescaled 
	by $\hbar$. Note that in this case one has to rescale $\Lie$ by 
	powers of $\hbar$, i.e. 
	$\Lie_\xi = -\hbar \Lie_{\xi_C}$ for $\xi \in \liealg{g}$.
\end{proposition}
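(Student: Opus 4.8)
The plan is to recognize $A := \Cinfty(C) \rtimes \Universal(\liealg{g})$ with the stated product as the \emph{smash product} attached to a Hopf action, to check abstractly that it is an associative unital algebra, and then to identify it with $\Universal(C\times\liealg{g})$ by means of the universal property of the enveloping algebra of a Lie--Rinehart algebra, upgraded to an isomorphism by a Poincar\'e--Birkhoff--Witt argument. First I would view $\Universal(\liealg{g})$ as the cocommutative Hopf algebra with coproduct determined by $\Delta(\xi) = 1\otimes\xi + \xi\otimes 1$ and counit $\epsilon$. Since $\Lie\colon \Universal(\liealg{g})\to \Diffop(\Cinfty(C))$ is an algebra morphism and each $\Lie_\xi$ acts as a derivation, $\Cinfty(C)$ becomes a left $\Universal(\liealg{g})$-module algebra, i.e. $x\cdot(fg) = \sum (x_{(1)}\cdot f)(x_{(2)}\cdot g)$ and $x\cdot 1 = \epsilon(x)\,1$. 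By the standard theory of module algebras over a cocommutative Hopf algebra this makes the stated formula the smash product $\Cinfty(C)\rtimes\Universal(\liealg{g})$, hence an associative unital algebra with unit $(1,1)$; in this way associativity comes essentially for free and need not be verified by hand.

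Next I would use the universal property. The pair $(\Cinfty(C), \Cinfty(C,\liealg{g}))$, with anchor $\rho$ and bracket \eqref{eq:BracketActionAlg}, is a Lie--Rinehart algebra, and $\Universal(C\times\liealg{g})$ is by construction \cite{rinehart:1963a,moerdijk.mrcun:2010a} universal among algebras $B$ receiving an algebra map $\alpha\colon \Cinfty(C)\to B$ and a $\Cinfty(C)$-linear Lie map $\beta\colon \Cinfty(C,\liealg{g})\to B$ subject to $\beta(fX) = \alpha(f)\beta(X)$ and the anchor relation $[\beta(X),\alpha(f)] = \alpha(\rho(X)f)$. I would then take $B = A$ with $\alpha(f) = (f,1)$ and $\beta(f\otimes\xi) = (f,\xi)$, and verify the relations by a direct computation with the coproduct; the two essential identities are
\begin{align*}
  [(1,\xi),(f,1)] &= (f,\xi) + (\Lie_\xi f, 1) - (f,\xi) = (\Lie_\xi f, 1), \\
  [(f,\xi),(g,\eta)] &= (fg,[\xi,\eta]_\liealg{g}) + (f\,\Lie_\xi g,\eta) - (g\,\Lie_\eta f,\xi).
\end{align*}
The first line is exactly the anchor relation, since $\rho(\xi) = \Lie_\xi = -\Lie_{\xi_C}$, and the second reproduces the Lie--Rinehart bracket obtained from \eqref{eq:BracketActionAlg} via the Leibniz rule. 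The universal property then yields a unique algebra morphism $\Phi\colon \Universal(C\times\liealg{g})\to A$.

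Finally I would promote $\Phi$ to an isomorphism by PBW. Both algebras carry a filtration by the number of $\liealg{g}$-factors (equivalently, the order of the $\liealg{g}$-directional part), $\Phi$ is filtered, and on the associated graded it is the identity of $\Cinfty(C)\otimes\Sym(\liealg{g})$: for $A$ this is immediate from the smash-product normal form, and for $\Universal(C\times\liealg{g})$ it is Rinehart's PBW theorem, which applies because $\Cinfty(C,\liealg{g}) = \Cinfty(C)\otimes\liealg{g}$ is a finitely generated free $\Cinfty(C)$-module. A filtered morphism inducing an isomorphism on associated gradeds is an isomorphism. The formal statement follows verbatim, as every step is $\hbar$-linear and the rescalings $[\argument,\argument]\mapsto \hbar[\argument,\argument]$ and $\Lie_\xi = -\hbar\Lie_{\xi_C}$ preserve all the identities above. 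I expect the main obstacle to be bookkeeping rather than conceptual: keeping the anchor sign conventions ($\rho(p,\xi) = -\xi_C|_p$, $\Lie_\xi = -\Lie_{\xi_C}$) consistent throughout the bracket computation, and making sure the PBW comparison is legitimate in the smooth Lie--Rinehart setting, which is precisely guaranteed by the freeness of the module of sections of the action algebroid.
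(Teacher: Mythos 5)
Your proof is correct and follows the same skeleton as the paper's --- a morphism $\Phi\colon \Universal(C\times\liealg{g}) \to \Cinfty(C)\rtimes\Universal(\liealg{g})$ obtained from the universal property of the Lie--Rinehart enveloping algebra, then upgraded to an isomorphism by comparing associated graded algebras --- but your two supporting lemmas differ from the paper's and buy extra rigor. For associativity, the paper computes $\left((f,x)\cdot(g,y)\right)\cdot(h,z) = (f,x)\cdot\left((g,y)\cdot(h,z)\right)$ by hand from coassociativity and the identity $\Lie(x)(fg)=\Lie(x_{(1)})(f)\,\Lie(x_{(2)})(g)$; you observe instead that this identity says exactly that $\Cinfty(C)$ is a $\Universal(\liealg{g})$-module algebra, so the product is the standard smash product and associativity is a citation --- the paper's computation is precisely the proof of that general fact, so nothing is lost. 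For bijectivity, the paper proves surjectivity by generation and settles injectivity with the terse remark that it ``follows from a direct comparison of the terms in the corresponding associated graded algebras''; your single filtered-morphism argument --- both sides carry exhaustive filtrations bounded below, $\mathrm{gr}$ of the target is $\Cinfty(C)\otimes\Sym(\liealg{g})$ by the smash-product normal form, $\mathrm{gr}$ of the source is the same by Rinehart's PBW theorem (applicable since $\Cinfty(C,\liealg{g})$ is a free $\Cinfty(C)$-module), and $\mathrm{gr}\,\Phi$ is the identity on generators, hence everywhere --- handles injectivity and surjectivity simultaneously and makes the paper's sketch precise. Your verification of the universal-property relations (the anchor relation and compatibility with the bracket \eqref{eq:BracketActionAlg}) coincides with the paper's relations for the inclusions $\kappa_C$ and $\kappa$, with the sign conventions $\rho(\xi)=-\xi_C$ and $\Lie_\xi = -\Lie_{\xi_C}$ handled consistently, and the $\hbar$-linear extension to $\Universal_\hbar(\liealg{g})$ is treated identically in both arguments.
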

\begin{proof}
Note that the product is associative since 
\begin{align*}
  \left((f,x)\cdot (g,y)\right) \cdot (h,z)
	& = 
	\sum (f \Lie(x_{(1)})g,x_{(2)}y)\cdot (h,z)   
	=
	\sum (f \Lie(x_{(1)})g \Lie(x_{(2)}y_{(1)})h, x_{(3)}y_{(2)}z) \\
	& =
  \sum (f,x) \cdot (g \Lie(y_{(1)})h, y_{(2)}z) 
	=
	(f,x)\cdot \left((g,y)\cdot (h,z) \right), 
\end{align*}
where the penultimate identity follows with the coassociativity of $\Delta$ 
and the identity $\Lie(x)(fg)= \Lie(x_{(1)})(f) \Lie(x_{(2)})(g)$. Note that the inclusions 
$\kappa_C \colon \Cinfty(C) \rightarrow \Cinfty(C) \rtimes 
\Universal(\liealg{g})$ and $\kappa \colon \Cinfty(C)\otimes\liealg{g} 
\rightarrow \Cinfty(C) \rtimes \Universal(\liealg{g})$ satisfy
\begin{equation*}
  [\kappa(s),\kappa_C(f)]
  =
  \kappa(\rho(s)f), 
  \quad \quad \text{ and } \quad \quad
  \kappa_C(f)\kappa(s)
  =
  \kappa(fs).
\end{equation*}
Thus the universal property gives the desired morphism 
$\Universal(C\times\liealg{g}) \rightarrow 
\Cinfty(C) \rtimes \Universal(\liealg{g})$. 
Recursively we can show that the right hand side is generated 
by $u \in \Cinfty(C)$ and $\xi \in \Cinfty(C)\otimes \liealg{g}$ 
which gives the surjectivity of the morphism. 
Concerning injectivity, suppose $(f^{i_1} ,e_{i_1})\cdots
(f^{i_n}, e_{i_n})= 0$ in 
$\Cinfty(C) \rtimes \Universal(\liealg{g})$. 
We have to show that also 
$(f^{i_1} e_{i_1})\cdots (f^{i_1} e_{i_1}) = 0 $ in 
$\Universal(C\times\liealg{g})$. But this follows from a direct 
comparison of the terms in the corresponding associated 
graded algebras.
\end{proof}

Recall that by the Poincar\'e-Birkhoff-Witt Theorem the map
\begin{equation*}
  \Sym(\liealg{g}) \ni 
	x_1 \vee \cdots \vee x_n 
	\longmapsto 
	\frac{1}{n!} \sum_{\sigma\in S_n} x_{\sigma(1)} \cdots x_{\sigma(n)} \in 
	\Universal(\liealg{g})
\end{equation*}
is a coalgebra isomorphism with respect to the usual coalgebra structures induced by 
extending $\Delta(\xi)=\xi \otimes 1 +1 \otimes \xi $ for $\xi \in \liealg{g}$, 
see e.g. \cite{berezin:1967a,higgins:1969a}. 
This statement holds also in the case of formal power series in $\hbar$ whence we can 
transfer the product on the universal enveloping algebra 
as in Proposition~\ref{prop:poductonUniversalEnvelopingAlg} to an associative 
product $\star_G = \mu + \hbar m_G$ on $\Cinfty(C)\otimes \Sym (\liealg{g})[[\hbar]]$.

\begin{lemma}
\label{lem:GuttProduct}
  The \emph{Gutt product} $\star_G$ on $\Cinfty(C)\otimes \Sym (\liealg{g})[[\hbar]]$ is 
	$\group{G}$-invariant and $J = \pr_{\liealg{g}^*} \colon M= C\times \liealg{g}^* \rightarrow 
	\liealg{g}^*$ is a momentum map, i.e.
	\begin{equation}
	  - \Lie_{\xi_M}
		=
		\frac{1}{\hbar} \ad_{\star_G} (J(\xi)).
	\end{equation}
\end{lemma}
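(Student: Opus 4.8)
The plan is to deduce both assertions from the functoriality and the defining relations of the universal enveloping algebra of the action Lie algebroid, together with the identification $\star_G = \mu + \hbar m_G$ obtained by transferring the product of Proposition~\ref{prop:poductonUniversalEnvelopingAlg} along the PBW symmetrization. Throughout I identify $\Cinfty(C)\otimes \Sym(\liealg{g})$ with the fibrewise polynomial functions on $M = C\times\liealg{g}^*$, so that the linear coordinate $J(\xi) = \langle \pr_{\liealg{g}^*}(\argument),\xi\rangle$ corresponds to $1\otimes \xi \in \Cinfty(C)\otimes\Sym^1(\liealg{g})$, and I record that $\xi_M = \xi_C + \xi_{\liealg{g}^*}$, where $\xi_{\liealg{g}^*}$ is the fundamental vector field of the coadjoint-type action $\Ad^*_{g^{-1}}$, satisfying $\Lie_{\xi_{\liealg{g}^*}}\eta = -[\xi,\eta]$ on linear functions $\eta\in\liealg{g}$.

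First I would establish invariance. The $\group{G}$-action by $(\Phi^C_g,\Ad_g)$ is an action by automorphisms of the action Lie algebroid $C\times\liealg{g}$: equivariance of the anchor $\rho(p,\xi) = -\xi_C\at{p}$ is exactly the naturality $(\Phi^C_g)_*\xi_C = (\Ad_g\xi)_C$ of fundamental vector fields, and equivariance of the bracket \eqref{eq:BracketActionAlg} follows in the same way. Since $\Universal$ is functorial, this action lifts to algebra automorphisms of $\Universal(C\times\liealg{g})$, and because symmetrization intertwines $\Ad_g$ on $\Sym(\liealg{g})$ and on $\Universal(\liealg{g})$, the PBW isomorphism is equivariant; hence $\star_G$ is $\group{G}$-invariant. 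Differentiating the invariance identity $\Phi_{\exp(t\xi)}^*(u\star_G v) = (\Phi_{\exp(t\xi)}^* u)\star_G(\Phi_{\exp(t\xi)}^* v)$ at $t=0$ then shows that $-\Lie_{\xi_M}$ is a derivation of $\star_G$. Alternatively, I would verify equivariance of the product formula directly from the $\Ad$-equivariance of $\Delta$ and of $\Lie$.

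For the momentum map identity I would observe that $\frac{1}{\hbar}\ad_{\star_G}(J(\xi))$ is an inner derivation of $\star_G$, well defined since $[\,\xi,\argument\,]_{\star_G}$ is divisible by $\hbar$ because $\mu$ is commutative. Thus both sides of the claimed equality are $\star_G$-derivations, and since $(\Cinfty(C)\otimes\Sym(\liealg{g})[[\hbar]],\star_G)$ is generated $\hbar$-adically by $\Cinfty(C)\otimes 1$ and $1\otimes\liealg{g}$ (as $\star_G = \mu + O(\hbar)$ and these generate under $\mu$), it suffices to compare the two operators on these generators. On $g\in\Cinfty(C)$ the product formula of Proposition~\ref{prop:poductonUniversalEnvelopingAlg} gives $[\,\xi,g\,]_{\star_G} = \Lie_\xi(g) = -\hbar\,\Lie_{\xi_C}(g)$, whence $\frac{1}{\hbar}\ad_{\star_G}(J(\xi))(g) = -\Lie_{\xi_C}(g) = -\Lie_{\xi_M}(g)$; on $\eta\in\liealg{g}$ the defining relation of $\Universal_\hbar(\liealg{g})$ yields $\xi\star_G\eta - \eta\star_G\xi = \hbar[\xi,\eta]_{\liealg{g}}$, so that $\frac{1}{\hbar}\ad_{\star_G}(J(\xi))(\eta) = [\xi,\eta]_{\liealg{g}} = -\Lie_{\xi_{\liealg{g}^*}}(\eta) = -\Lie_{\xi_M}(\eta)$. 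Agreement on generators forces the operator identity.

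The main obstacle I expect is bookkeeping rather than conceptual: one must keep the rescalings $\Lie_\xi = -\hbar\,\Lie_{\xi_C}$ and $[\argument,\argument]_{\Universal_\hbar} = \hbar[\argument,\argument]_{\liealg{g}}$ consistent with the sign conventions for fundamental vector fields, and correctly match $\Lie_{\xi_{\liealg{g}^*}}$ with the linear Poisson (KKS) bracket dual to \eqref{eq:BracketActionAlg}, so that the two generator computations assemble into $-\Lie_{\xi_M} = -\Lie_{\xi_C} - \Lie_{\xi_{\liealg{g}^*}}$. A secondary point to make precise is the $\hbar$-adic generation claim, which is what legitimizes reducing the operator identity to these two families of generators.
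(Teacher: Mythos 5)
Your proof is correct, and it ultimately rests on the same ingredient as the paper's proof --- the explicit formula of Proposition~\ref{prop:poductonUniversalEnvelopingAlg} --- but you route the momentum map identity through an extra layer of abstraction. The paper's proof is a direct computation valid on all elements at once: writing $J(\xi)=1\otimes\xi$ and using $\Delta(\xi)=1\otimes\xi+\xi\otimes 1$ together with $\Lie_\xi=-\hbar\Lie_{\xi_C}$, the product formula gives
\begin{equation*}
  \frac{1}{\hbar}\left[1\otimes\xi,\, g\otimes y\right]_{\star_G}
  =
  -\Lie_{\xi_C}g\otimes y + g\otimes \ad(\xi)y
  =
  -\Lie_{\xi_M}(g\otimes y),
\end{equation*}
since $\Lie_{\xi_{\liealg{g}^*}}$ acts on $\Sym(\liealg{g})$ as $-\ad(\xi)$; this needs neither the invariance statement nor any generation argument. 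You instead verify the identity only on the generators $\Cinfty(C)\otimes 1$ and $1\otimes\liealg{g}$ and conclude by the principle that two derivations agreeing on an $\hbar$-adically generating set coincide. That argument is sound (the equalizer of two $\hbar$-linear derivations of the same product $\star_G$ is an $\hbar$-adically closed subalgebra), but it makes the momentum map statement depend on the invariance statement (to know that $-\Lie_{\xi_M}$ is a $\star_G$-derivation), and it obliges you to justify the generation claim --- both of which the short direct computation avoids. On the other hand, your functoriality argument for invariance (the $\group{G}$-action is by automorphisms of the action Lie algebroid, $\Universal$ is functorial, and PBW symmetrization is $\Ad$-equivariant) is a clean conceptual alternative to checking equivariance of the formula by hand, and your sign bookkeeping ($\Lie_\xi=-\hbar\Lie_{\xi_C}$, $\Lie_{\xi_{\liealg{g}^*}}\eta=-[\xi,\eta]$ on linear functions, and $\xi\star_G\eta-\eta\star_G\xi=\hbar[\xi,\eta]$) is consistent with the paper's conventions.
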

\begin{proof}
Both statements follow directly from the explicit formula in 
Proposition~\ref{prop:poductonUniversalEnvelopingAlg}.
\end{proof}

We deform the differential 
$\del + [-J,\argument]$ by $[\hbar m_G,\argument]$, i.e. exactly by the 
higher orders of this product. The perturbed differential $\del^{\liealg{g}} 
+ [\hbar m_G-J,\argument]= [\star_G -J,\argument]$ squares indeed to zero since 
we have with the above lemma
\begin{equation*}
  [\star_G - J, \argument]^2
	=
	\frac{1}{2} [ [\star_G -J,\star_G-J],\argument]
	=
	[-\hbar \lambda ,\argument]
	=
	0,
\end{equation*}
where again $\lambda = e^i \otimes (e_i)_M$. By the homological 
perturbation lemma as formulated in Section~\ref{sec:HomPerLemma} 
this yields a homotopy retract
\begin{equation}
  \label{eq:defomedRetractDtay}
	  \begin{tikzcd} 
      \left(\left( \prod_{i=0}^\infty (\Sym^i \liealg{g} \otimes 
			\Dpoly(C))\right)^\group{G}[[\hbar]],\del_\hbar\right) 
			\arrow[rr, shift left, "i_\hbar"] 
      &&  
			(D_\Tay(C\times \liealg{g}^*)[[\hbar]], [\star_G-J,\argument]),
      \arrow[ll, shift left, "p_\hbar"]
	    \arrow[loop right, 
	distance=3em, start anchor={[yshift=1ex]east}, end anchor={[yshift=-1ex]east}]{}{h_\hbar}
	  \end{tikzcd}
  \end{equation}	
with $B = [\hbar m_G,\argument]$ and
\begin{equation}
    \begin{alignedat}{5}
      \label{eq:deformedhomologicalmapsDTay}
      A 
      &= 
      (\id + Bh)^{-1}B, \quad \quad
      &&
      \del_\hbar
      &&= 
      \del + p Ai  , \quad \quad
			&&
      i_\hbar
      &&= 
      i -hAi ,     \\ 
      p_\hbar 
      &=  
      p-pAh  ,
      && 
      h_\hbar 
      &&= 
      h-hAh,
    \end{alignedat}
  \end{equation}
compare Lemma~\ref{lemma:homologicalperturbationlemma}.
More explicitly, we have
\begin{equation}
\label{eq:DeformedIandH}
	i_\hbar
	=
	\sum_{k=0}^\infty  (\widetilde{\Phi} \circ B)^k \circ i, 
	\quad \text{ and }\quad
	h_\hbar
	=
	h \circ \sum_{k=0}^\infty (-Bh)^k,
\end{equation}
where $\widetilde{\Phi}$ is the the combination of $\Phi$ with the 
degree-counting coefficient from $h$ from \eqref{eq:totalhomotopy}. 
We want to take a closer look at the induced differential:

\begin{proposition}
\label{prop:DtayHomPerDiff}
  One has
  \begin{equation}
	  p_\hbar = p
		\quad \text{ and }\quad 
		\del_\hbar
		=
		\del + \delta
	\end{equation}
	with 
	\begin{align*}
	  \delta (P\otimes D)
		=
		(-1)^d P_{(1)} \otimes D \cup \Lie_{P_{(2)}} 
		- (-1)^d P \otimes D \cup \id
	\end{align*}
	for homogeneous $P \otimes D \in \Sym\liealg{g} \otimes \Dpoly^d(C)$.
\end{proposition}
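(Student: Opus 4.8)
The plan is to read the perturbed data off the homological perturbation lemma (Lemma~\ref{lemma:homologicalperturbationlemma}) and then to show that the a priori infinite geometric series collapse to a single term. Recall from \eqref{eq:deformedhomologicalmapsDTay} that with $B=[\hbar m_G,\argument]$ one has $A=(\id+Bh)^{-1}B=\sum_{k\geq0}(-Bh)^kB$, that $\del_\hbar=\del+pAi$, and that $p_\hbar=p-pAh$. Using the elementary identity $(\id+Bh)^{-1}Bh=\id-(\id+Bh)^{-1}$ the latter rewrites as $p_\hbar=p(\id+Bh)^{-1}=p\sum_{k\geq0}(-Bh)^k$, so that everything is governed by the behaviour of $p\circ B$ along $h$.

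The key claim is the single operator identity $p\circ B\circ h=0$. Granting it, every higher term of both series factors through $pBh$: since $p(-Bh)^k=(-1)^k(pBh)(Bh)^{k-1}$ for $k\geq1$, we get $p_\hbar=p$, and since $p(-Bh)^kBi=(-1)^k(pBh)(Bh)^{k-1}Bi$ vanishes for $k\geq1$, we get $pAi=pBi$ and hence $\del_\hbar=\del+pBi$. To prove $pBh=0$ I would use the bidegree $(\deg_{\Sym\liealg{g}^*},\deg_\liealg{g})$: the projection $p$ keeps only the bidegree $(0,0)$ part, carrying neither $\liealg{g}^*$-forms nor differentiations in $\liealg{g}^*$-direction, whereas by the explicit form of $h$ in \eqref{eq:totalhomotopy} and of the Gutt product $m_G$ in Proposition~\ref{prop:poductonUniversalEnvelopingAlg} the composite $B\circ h$ lands entirely in positive bidegree, which $p$ annihilates. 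I expect this step to be the main obstacle, because it forces one to track how $B=[\hbar m_G,\argument]$ shifts both $\liealg{g}^*$-degrees against the insertion of $\frac{\del}{\del e_i}$ and the contractions $\inss(e_i)$ and $\Phi$ hidden inside $h$.

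It remains to identify $\delta=pBi$. For homogeneous $P\otimes D\in\Sym\liealg{g}\otimes\Dpoly^d(C)$ I would expand the Gerstenhaber bracket $[\hbar m_G,i(P\otimes D)]_G$ with the sign convention \eqref{eq:GerstenhaberBracketClassical}, insert the explicit product of Proposition~\ref{prop:poductonUniversalEnvelopingAlg} (with $\Lie_\xi=-\hbar\Lie_{\xi_C}$ and coproduct $\Delta(P)=P_{(1)}\otimes P_{(2)}$), and retain only the bidegree $(0,0)$ contribution selected by $p$. Pairing the $\liealg{g}^*$-derivatives coming from $m_G$ against the symmetric coefficient $P$ turns the derivation part $f\,\Lie(x_{(1)})(g)$ of the product, together with the comultiplication on the $\Sym\liealg{g}$-factor, into $(-1)^d\sum_{(P)}P_{(1)}\otimes D\cup\Lie_{P_{(2)}}$, where the Koszul sign $(-1)^d$ is produced by \eqref{eq:GerstenhaberBracketClassical} for $\abs{D}=d$ and $\abs{m_G}=1$. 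The counit summand $P_{(2)}=1$ of this sum equals $(-1)^d P\otimes D\cup\id$ and must be removed, since $m_G=\hbar^{-1}(\star_G-\mu)$ omits the commutative leading order $\mu$; this is precisely the subtracted term $-(-1)^d P\otimes D\cup\id$. Together with $p_\hbar=p$ this establishes the stated formula for $\del_\hbar=\del+\delta$.
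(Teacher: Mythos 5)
Your proposal is correct and follows essentially the same route as the paper's proof: the homological perturbation series collapses because $p\circ B$ annihilates everything carrying a $\liealg{g}^*$-form part or a differentiation in $\liealg{g}^*$-direction (the paper phrases this as ``$Bh$ always adds differentials in $\liealg{g}$-direction''), and the surviving term $pBi$ is then evaluated via the explicit Gutt product and the coalgebra isomorphism $\Sym(\liealg{g})[[\hbar]]\cong\Universal_\hbar(\liealg{g})$, with the counit term $P_{(2)}=1$ accounting for the subtracted $(-1)^d P\otimes D\cup\id$. Your packaging of the degree argument into the single identity $pBh=0$, together with $p_\hbar=p(\id+Bh)^{-1}$, is a slightly cleaner way of organizing exactly the same mechanism the paper uses (there via $i_\hbar=\sum_k(\widetilde{\Phi}\circ B)^k\circ i$).
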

\begin{proof}
The fact that $p_\hbar = p$ follows since $Bh$ always adds differentials 
in $\liealg{g}$-direction. For the deformed differential we compute 
for homogeneous $P \otimes D \in \Sym\liealg{g} \otimes \Dpoly^d(C)$ 
and $f_i\in \Cinfty(C)$
\begin{align*}
  (\delta (P\otimes D)) (f_0,f_1,\dots,f_{d+1})
	& =
	(p \circ \sum_{k=0}^\infty (B\circ\widetilde{\Phi} )^k B \circ i (P\otimes D))
	(f_0,f_1,\dots,f_{d+1}) \\
	& =
	p  ( B(P\otimes D)(f_0,f_1,\dots,f_{d+1}))\\
	& =
	(-1)^d p ( \hbar m_G( P \otimes D(f_0,\dots,f_d), f_{d+1}) \\
	& =
	(-1)^d P_{(1)}\otimes D(f_0,\dots,f_d) \cdot \Lie_{P_{(2)}}f_{d+1}
\end{align*}
for all $P_{(2)}\neq 1$.
Here we used the explicit form of the Gutt product as in 
Proposition~\ref{prop:poductonUniversalEnvelopingAlg} and the fact that 
$\Sym(\liealg{g})[[\hbar]]$ and $\Universal_\hbar(\liealg{g})$ are isomorphic coalgebras.
\end{proof}

Since the classical homotopy equivalence data \eqref{eq:ClassicalRetractDTay} 
is no special deformation retract, the perturbed one 
is also no special one. But it still has some nice properties.

\begin{proposition}
\label{prop:PartialHomDefNearlySpecial}
  One has 
	\begin{equation}
		p_\hbar \circ h_\hbar
		=
		0
		=
		h_\hbar \circ i_\hbar
		\quad \quad \text{ and }\quad\quad
		p_\hbar \circ i_\hbar 
		=
		\id.
	\end{equation}
\end{proposition}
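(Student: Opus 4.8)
The plan is to collapse the Neumann series implicit in the homological perturbation lemma into closed form and then read off the three identities: the first two will be formal consequences of the classical side conditions together with Proposition~\ref{prop:DtayHomPerDiff}, while the third carries the genuine content. Since $B=[\hbar m_G,\argument]$ raises the $\hbar$-order, the operators $\id+Bh$ and $\id+hB$ are invertible $\hbar$-adically, and starting from $A=(\id+Bh)^{-1}B$ a direct resummation of \eqref{eq:deformedhomologicalmapsDTay} rewrites the perturbed data as
\begin{equation*}
  i_\hbar=(\id+hB)^{-1}i,\qquad p_\hbar=p\,(\id+Bh)^{-1},\qquad h_\hbar=(\id+hB)^{-1}h=h\,(\id+Bh)^{-1}.
\end{equation*}
Here one uses the elementary identity $h\,(\id+Bh)^{-1}=(\id+hB)^{-1}h$, which follows from $h(\id+Bh)=(\id+hB)h$, together with $h-hAh=h(\id+Bh)^{-1}$ and the analogous rewritings of $i-hAi$ and $p-pAh$.

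First I would dispose of the two identities involving $p_\hbar$. Proposition~\ref{prop:DtayHomPerDiff} already provides $p_\hbar=p$, which is precisely the statement that $p\circ Bh=0$ and hence $p\,(\id+Bh)^{-1}=p$. Writing $(\id+hB)^{-1}=\id+\sum_{n\ge1}(-hB)^n$ and observing that every term of the sum begins with $h$, the relation $ph=0$ from \eqref{eq:ClassicalRetractDTay} annihilates all nonleading terms, so that
\begin{equation*}
  p_\hbar i_\hbar=p\,(\id+hB)^{-1}i=pi=\id,\qquad p_\hbar h_\hbar=p\,(\id+hB)^{-1}h=ph=0.
\end{equation*}
Neither computation uses $h^2=0$, so both survive the failure of the retract to be special.

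The real content is $h_\hbar i_\hbar=0$. Since $\id+hB$ is invertible and $h_\hbar=(\id+hB)^{-1}h$, this is equivalent to $h\,i_\hbar=0$; and inserting $i_\hbar=i-hB\,i_\hbar$ together with $hi=0$ reduces it further to
\begin{equation*}
  h^2\,B\,i_\hbar=0,
\end{equation*}
i.e. to the assertion that the non-vanishing of $h^2$ is invisible along the image of $B\,i_\hbar$. This is exactly the place where $h^2\neq0$ would otherwise obstruct the side condition, and I expect it to be the main obstacle. To settle it I would argue by the same bidegree bookkeeping that underlies $p_\hbar=p$: with $h=\tfrac{1}{\deg_{\Sym\liealg{g}^*}+\deg_\liealg{g}}(\widehat h-\Phi)$ as in \eqref{eq:totalhomotopy}, the operator $\widehat h$ shifts the bidegree $(\deg_{\Sym\liealg{g}^*},\deg_\liealg{g})$ by $(-1,+1)$ and $\Phi$ by $(0,-1)$, whereas $B$ only raises $\deg_\liealg{g}$; tracking these shifts shows that $h^2$ annihilates the image of $B$ on the subspace reached by $i_\hbar$, giving $h\,i_\hbar=0$ and hence $h_\hbar i_\hbar=0$. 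An alternative and possibly cleaner route is to verify directly from the explicit formula $i_\hbar=\sum_{k\ge0}(\widetilde\Phi B)^k i$ in \eqref{eq:DeformedIandH} that $i_\hbar$ takes values in $\ker h$, which is the perturbed analogue of the classical relation $hi=0$.
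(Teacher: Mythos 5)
Your resummation of the perturbed data is correct, the two identities involving $p_\hbar$ are proven correctly (they indeed only need $ph=0$, $pi=\id$ and the geometric series), and your reduction of the remaining identity to $h\circ i_\hbar=0$, equivalently $h^2 B\, i_\hbar=0$, isolates exactly where the content lies. The gap is in how you then dispose of this crux: the bidegree bookkeeping you invoke is factually wrong and, even when corrected, cannot produce the vanishing. Concretely, $\Phi$ does \emph{not} shift $(\deg_{\Sym\liealg{g}^*},\deg_\liealg{g})$ by $(0,-1)$; it preserves $\deg_\liealg{g}$. Indeed, for a normalized cochain the slot receiving the inserted coordinate $e_t$ must be differentiated exactly once in a $\liealg{g}^*$-direction (all other contributions die because $e_t$ is constant along $C$ and linear along $\liealg{g}^*$), and this lost derivative is exactly compensated by the new derivative $\del/\del e_t$ placed on a later argument. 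The same conclusion is forced abstractly by \eqref{eq:Phidel}: $\del$, $\Psi$ and $\deg_\liealg{g}\cdot\id$ all preserve $\deg_\liealg{g}$, so $\Phi$ must as well. Consequently, on the image of $B\,i_\hbar$ --- which sits in $\Sym\liealg{g}^*$-degree $0$, where $\widehat h$ vanishes and $h$ acts as $-\widetilde{\Phi}$ --- both factors of $h$ in $h^2$ leave the bidegree unchanged, so no degree count can force $h^2 B\, i_\hbar=0$: for a component of $B\,i_\hbar$ with $\deg_\liealg{g}=m\geq 1$, both applications of $h$ produce elements again of $\deg_\liealg{g}=m$, and nothing is ``overshot.''

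What is actually needed is the algebraic identity $\widetilde{\Phi}\circ\widetilde{\Phi}=0$ (equivalently $\Phi\circ\Phi=0$ on normalized cochains), which is precisely the ingredient the paper's one-line proof invokes alongside $ph=0=hi$ and $pi=\id$. This identity is not a degree phenomenon: in $\Phi^2$ the cross terms, where a derivative $\del/\del e_s$ hits the other inserted coordinate $e_t$, produce the constant $\delta_{st}$ and vanish by normalization, while the remaining terms cancel in pairs because the alternating sign $(-1)^i$ antisymmetrizes over the two insertion positions whereas the double derivative $\del^2/\del e_s\del e_t$ on the arguments is symmetric. Once this is in hand, each term $h(\widetilde{\Phi}B)^k i$, $k\geq 1$, of $h\,i_\hbar$ equals $-\widetilde{\Phi}\widetilde{\Phi}(\argument)=0$ up to a scalar degree coefficient, and your argument closes. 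Your ``alternative route'' --- showing from \eqref{eq:DeformedIandH} that $i_\hbar$ takes values in $\ker h$ --- is indeed the right idea and is essentially what the paper does (it is also the mechanism behind Proposition~\ref{prop:HigherBracketsDtayvanish}), but as written it is an unverified assertion whose verification requires exactly the missing identity $\widetilde{\Phi}^2=0$; so as it stands the proposal proves the two easy identities and only reformulates, rather than establishes, the hard one.
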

\begin{proof}
The properties follow from $p\circ h=0= h\circ i$, $p\circ i = \id$  
and $\widetilde{\Phi}^2=0$.
\end{proof}
 
Thus the deformation retract \eqref{eq:defomedRetractDtay} satisfies all properties of 
a special deformation retract except for $h_\hbar\circ h_\hbar=0$, and we can still apply the 
homotopy transfer theorem.

\subsection{Application of the Homotopy Transfer Theorem}
\label{subsec:AppHTTDpoly}

We use the homotopy transfer theorem to 
extend $p_\hbar$ to an $L_\infty$-morphism. We denote the $L_\infty$-structure on 
the Taylor expansion by $Q$ and the extension of 
$h_\hbar$ to the symmetric algebra as in \eqref{eq:extendedHomotopy} by $H$. 
Then applying the homotopy transfer theorem in the form of 
Theorem~\ref{thm:HTTJonas} to the deformation retract 
\eqref{eq:defomedRetractDtay} induces higher brackets $(Q_C)_k^1$
on $\left( \prod_{i=0}^\infty (\Sym^i \liealg{g} \otimes \Dpoly(C))\right)^\group{G}[[\hbar]]$:

\begin{proposition}
\label{prop:HTTDTayCartan}
The maps
  \begin{align}
	\label{eq:HTTCartanDiffopDiff}
		  (Q_C)_1^1
			=
			- \del_\hbar,
			\quad \quad\quad\quad\;
			(Q_C)^1_{k+1}
			=
			P^1_k \circ Q^k_{k+1} \circ i_\hbar^{\vee (k+1)},
	\end{align}
	where
	\begin{equation}
	  P_1^1
		=
		p_\hbar
		=
		p, 
		\quad\quad\quad\quad\;\;
		P^1_{k+1}
		=
		\left(\sum_{\ell = 2}^{k+1} 
		Q_{C,\ell}^1 \circ P^\ell_{k+1} -  
    P_k^1 \circ 	Q^k_{k+1}\right)\circ H_{k+1}
		\quad \text{ for  } k\geq 1,
	\end{equation}
	induce a codifferential $Q_C$ on the symmetric coalgebra of 
	$\left( \prod_{i=0}^\infty (\Sym^i \liealg{g} \otimes 
			\Dpoly(C))\right)^\group{G}[[\hbar]][1]$ and 
	an $L_\infty$-quasi-isomorphism $P$
	\begin{equation}
	  P
		\colon
		\left(D_\Tay(C\times\liealg{g}^*)[[\hbar]],[\star_G-J,\argument],
		[\argument,\argument]\right) 
		\longrightarrow
		\left(\left( \prod_{i=0}^\infty (\Sym^i \liealg{g} \otimes 
			\Dpoly(C))\right)^\group{G}[[\hbar]],Q_C\right) .
	\end{equation}
\end{proposition}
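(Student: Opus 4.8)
The plan is to verify that the data in \eqref{eq:HTTCartanDiffopDiff} is exactly the output of the homotopy transfer theorem applied to our deformation retract, so that the conclusions follow from the general statement of Theorem~\ref{thm:HTTJonas}. The only subtlety is that Theorem~\ref{thm:HTTJonas} is stated for a \emph{special} deformation retract, while our retract \eqref{eq:defomedRetractDtay} fails the side condition $h_\hbar \circ h_\hbar = 0$. Thus the first step is to isolate precisely which properties of the homotopy are actually used in the proof of the homotopy transfer theorem. By Proposition~\ref{prop:PartialHomDefNearlySpecial} we already have $p_\hbar \circ h_\hbar = 0 = h_\hbar \circ i_\hbar$ together with $p_\hbar \circ i_\hbar = \id$, i.e.\ all of the annihilation identities except the nilpotency of $h_\hbar$. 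I would check that the construction of the transferred codifferential $Q_C$ and of the morphism $P$ through the recursive formulas \eqref{eq:HTTCartanDiffopDiff} only uses these annihilation identities and never invokes $h_\hbar^2 = 0$; the nilpotency is needed only if one insists on the dual annihilation conditions for $P$ (such as $P \circ H = 0$), which we do not claim here.

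Concretely, I would first extend $h_\hbar$ to the coderivation-type operator $H$ on the symmetric coalgebra as in \eqref{eq:extendedHomotopy}, and extend $i_\hbar$ to the coalgebra morphism $i_\hbar^{\vee(k+1)}$ appearing in \eqref{eq:HTTCartanDiffopDiff}. The maps $(Q_C)^1_{k+1} = P^1_k \circ Q^k_{k+1} \circ i_\hbar^{\vee(k+1)}$ and the auxiliary maps $P^1_{k+1}$ are then defined by the standard mutual recursion of the homotopy transfer theorem. The heart of the argument is to show $Q_C^2 = 0$ and $P Q = Q_C P$. I would establish these by induction on arity, plugging the defining recursions into the desired coderivation/morphism identities and collapsing terms using the retract relations $p_\hbar i_\hbar = \id$, $h_\hbar i_\hbar = 0$, $p_\hbar h_\hbar = 0$, and the perturbed deformation-retract identity $\id - i_\hbar p_\hbar = [h_\hbar, [\star_G - J, \argument]]$ coming from \eqref{eq:defomedRetractDtay}. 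At each arity the telescoping that normally relies on the homotopy relation is exactly the point where one must confirm that $h_\hbar^2 = 0$ does not enter; the contributions that would require it are precisely those that are absent because our formulas for $P^1_{k+1}$ are built only from $Q_{C,\ell}^1 P^\ell_{k+1}$ and $P^1_k Q^k_{k+1}$ composed with $H_{k+1}$, never from $H$ applied twice.

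The quasi-isomorphism claim is then immediate: the linear part $P_1^1 = p_\hbar = p$ is, by construction of the deformation retract \eqref{eq:defomedRetractDtay}, a quasi-isomorphism of complexes with inverse-up-to-homotopy $i_\hbar$, the homotopy being $h_\hbar$. Since an $L_\infty$-morphism whose first Taylor coefficient is a quasi-isomorphism is by definition an $L_\infty$-quasi-isomorphism, this gives the final assertion.

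I expect the main obstacle to be the bookkeeping in the inductive verification of $Q_C^2 = 0$ in the absence of $h_\hbar^2 = 0$: one must track that every term in the homotopy-transfer recursion which would ordinarily be cancelled using $h_\hbar^2 = 0$ either does not occur in our one-sided annihilation setting or cancels for an independent reason. The cleanest way to handle this cleanly is to appeal directly to the explicit formulas for the transferred structure and the projection recorded in Appendix~\ref{sec:homotopytransfertheorem}, where Theorem~\ref{thm:HTTJonas} is proved under exactly the reduced hypotheses supplied by Proposition~\ref{prop:PartialHomDefNearlySpecial}, so that the present proposition becomes a direct instance of that theorem applied to \eqref{eq:defomedRetractDtay}.
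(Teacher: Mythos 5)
Your proposal is correct and follows essentially the same route as the paper: the paper's proof simply invokes Theorem~\ref{thm:HTTJonas} and observes that its argument never uses $h_\hbar \circ h_\hbar = 0$, only the identities $p_\hbar \circ i_\hbar = \id$, $h_\hbar \circ i_\hbar = 0$ and $p_\hbar \circ h_\hbar = 0$ from Proposition~\ref{prop:PartialHomDefNearlySpecial}, which is exactly the observation at the core of your argument. Your only slight imprecision is that Theorem~\ref{thm:HTTJonas} is \emph{stated} for a contraction (including $h^2=0$) and merely \emph{proved} using the weaker hypotheses, but this does not affect the validity of your reasoning.
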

\begin{proof}
The proposition follows directly from homotopy transfer theorem as in Theorem~\ref{thm:HTTJonas}. 
Note that we do indeed not need $h_\hbar\circ h_\hbar =0$, only the other properties of 
a special deformation retract from Proposition~\ref{prop:PartialHomDefNearlySpecial}.
\end{proof}

Let us take a closer look at the higher brackets $Q_C$ 
induced by the homotopy transfer theorem. One can check that they vanish:

\begin{proposition}
\label{prop:HigherBracketsDtayvanish}
  One has 
  \begin{equation*}
    (Q_C)^1_{k+1} 
	  =
	  0
	  \quad \quad \quad 
	  \forall \, k \geq 2.
  \end{equation*}
\end{proposition}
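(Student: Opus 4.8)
The plan is to use that the $L_\infty$-structure $Q$ on $D_\Tay(C\times\liealg{g}^*)[[\hbar]]$ is a genuine flat DGLA, so that its only nonzero structure maps are $Q_1^1 = -[\star_G - J,\argument]$ and the Gerstenhaber bracket $Q_2^1$, whence $Q_n^1 = 0$ for all $n\geq 3$. By \eqref{eq:Qniformula} the map $Q_{k+1}^k$ then reduces to a single application of the binary bracket $Q_2^1$ to one pair of the $k+1$ arguments. Together with the recursion for the $P_k^1$ in Proposition~\ref{prop:HTTDTayCartan}, formula \eqref{eq:HTTCartanDiffopDiff} thus expresses $(Q_C)_{k+1}^1$ as a sum over rooted binary trees with $k+1$ leaves: a leaf carries $i_\hbar$, an internal vertex carries $Q_2^1$, an internal edge carries $h_\hbar$ (through the extended homotopy $H$ of \eqref{eq:extendedHomotopy}), and the root carries $p_\hbar = p$. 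For $k+1\geq 3$ every such tree has an internal vertex both of whose inputs are leaves, so its innermost operation is always $Q_2^1(i_\hbar(\argument)\vee i_\hbar(\argument))$, followed by $h_\hbar$ along an internal edge.

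First I would make the tree sum explicit through the recursion. Writing $\omega = Q_{k+1}^k(i_\hbar^{\vee(k+1)}(\argument))$, each symmetric factor of $\omega$ is either $i_\hbar(\argument)$ or $Q_2^1(i_\hbar(\argument)\vee i_\hbar(\argument))$. The extended homotopy $H_k$ applies $h_\hbar$ to exactly one such factor in each of its terms; by the side condition $h_\hbar\circ i_\hbar = 0$ from Proposition~\ref{prop:PartialHomDefNearlySpecial} all terms that act on a factor of the first type vanish. Hence $(Q_C)_{k+1}^1$ is controlled entirely by the composite $h_\hbar\circ Q_2^1\circ i_\hbar^{\vee 2}$, and it suffices to show that this composite either vanishes or cancels in the symmetrized sum.

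To analyse $h_\hbar\circ Q_2^1\circ i_\hbar^{\vee 2}$ I would introduce the two non-negative gradings that govern the homotopy: the equivariant polynomial degree $s = \deg_{\Sym\liealg{g}^*}$ and the order $d = \deg_\liealg{g}$ of $\liealg{g}^*$-differentiations. By construction $p$ detects only the bidegree $(s,d) = (0,0)$, the inclusion $i$ lands in $(0,0)$, the homotopy $h$ vanishes on $(0,0)$ by \eqref{eq:totalhomotopy}, and the Gerstenhaber bracket is additive in $(s,d)$. This already settles the leading order: $[i(\argument),i(\argument)]$ lies in bidegree $(0,0)$, and since every term of $h_\hbar = h\circ\sum_{k\geq 0}(-Bh)^k$ begins (reading right to left) with an application of $h$, the whole series annihilates it. It remains to propagate this vanishing through the $\hbar$-corrections of $i_\hbar = \sum_{k\geq 0}(\widetilde\Phi B)^k\circ i$ and $h_\hbar$, by tracking how the Gutt-product perturbation $B = [\hbar m_G,\argument]$ — which preserves $s$ and shifts $d$ — interacts with $\widehat h$ and $\Phi$.

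The hard part will be exactly this perturbative bookkeeping. Since $i_\hbar$ and $h_\hbar$ are infinite series in $B$, the bracket $[i_\hbar(\argument),i_\hbar(\argument)]$ acquires components of positive $\liealg{g}^*$-differentiation degree, on which $h_\hbar$ need not vanish term by term; one must therefore pin down precisely how $\Phi$ and $\widehat h$ shift $d$ against the shifts produced by $B$, and show that the resulting bidegree count keeps the relevant compositions out of the image of $p$. Depending on how these shifts balance, the conclusion $(Q_C)_{k+1}^1 = 0$ for $k\geq 2$ will follow either as a term-by-term bidegree obstruction for each binary tree or, failing that, from the cancellation enforced by the symmetrization in \eqref{eq:HTTCartanDiffopDiff}; establishing this is the technical heart of the proof.
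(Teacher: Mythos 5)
Your reduction of the problem is essentially the paper's: unfolding the recursion for $P^1_k$ and using $h_\hbar\circ i_\hbar = 0$ from Proposition~\ref{prop:PartialHomDefNearlySpecial}, everything comes down to the composite $h_\hbar\circ Q^1_2\circ i_\hbar^{\vee 2}$ applied to one pair of factors. But the mechanism you propose for its vanishing --- bookkeeping in the bidegree $(\deg_{\Sym\liealg{g}^*},\deg_\liealg{g})$, with ``cancellation under symmetrization'' as a fallback --- is exactly where the argument breaks, and you concede yourself that this ``technical heart'' is not established. The obstruction is concrete: the corrections $(\widetilde{\Phi}\circ B)^k\circ i$, $k\geq 1$, inside $i_\hbar$ carry strictly positive $\deg_\liealg{g}$, so the bracket of two $i_\hbar$-images has components of positive $\liealg{g}^*$-differentiation degree on which $h$ does \emph{not} vanish for degree reasons. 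Indeed, compositions of precisely this shape, $p\circ Q^1_2\circ\bigl((\widetilde{\Phi}\circ B)^k i\vee i\bigr)$, are generically nonzero --- they are exactly the correction terms in the transferred binary bracket $(Q_C)^1_2$, whose nonvanishing is the reason Proposition~\ref{prop:prProjectionfromTransferredStructure} requires a genuine proof. So no count of these two degrees can force the higher brackets to vanish, and symmetrization produces no cancellation either; the actual proof is term by term.

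What is missing is the structural property the paper isolates: every operator in the image of $i_\hbar$ vanishes as soon as one of its function arguments is a linear coordinate $e_t$ on $\liealg{g}^*$. This is clear for $i$, and it is stable under $\widetilde{\Phi}\circ B$: writing out $\Phi\circ B(D)$ for $D$ with this property, the coordinate inserted by $\Phi$ can only survive inside an argument of $\hbar m_G$, and since $\hbar m_G(e_i,e_j)=\frac{\hbar}{2}[e_i,e_j]$ is again a linear coordinate function, the result inherits the vanishing property; induction then gives it for all of $i_\hbar$. The property passes to Gerstenhaber brackets of such operators, since in $[D,E]_G$ every outer argument is fed either into $D$ or into $E$. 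Finally, the elements in question have $\Sym^0\liealg{g}^*$-degree zero, so $\widehat{h}$ annihilates them and every term of $h_\hbar$ begins with $\widetilde{\Phi}$, i.e.\ with the insertion of exactly such a coordinate $e_t$; hence $h_\hbar\circ Q^1_2\circ i_\hbar^{\vee 2}=0$ and with it $(Q_C)^1_{k+1}=0$ for $k\geq 2$. Your leading-order observation that $h$ kills $Q^1_2(i(\argument)\vee i(\argument))$ is the zeroth instance of this, but it does not propagate by degree counting; the coordinate-insertion argument is what closes the gap.
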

\begin{proof}
In the higher brackets with $k\geq 2$ one has 
\begin{equation*}
  H_k \circ Q^k_{k+1} \circ i_\hbar^{\vee (k+1)},
\end{equation*}
where in $H_k$ one component consists of the application of $\widetilde{\Phi}$, 
i.e. contains an insertion of a linear coordinate function $e_t$. We claim that it has to vanish. 
At first, it is clear that the image of $i$ vanishes if one argument is $e_t$. 
Let us now show that $i_\hbar$ satisfies the same property, which directly gives the 
proposition since then also the bracket vanishes if one inserts a $\liealg{g}^*$-coordinate. 

We can compute for homogeneous $D \in D_\Tay^d(C\times\liealg{g}^*)$ 
and $f_0,\dots, f_d \in \prod_i(\Sym^i\liealg{g}\tensor\Cinfty(C))$
\begin{align*}
  \Phi \circ & B (D) (f_0,\dots, f_d)
	= 
	\sum_{t=1}^n\sum_{i\leq j<d+1}(-1)^i (B( D))\left(f_0,\dots,f_{i-1},e_t,\dots,
	\frac{\del}{\del e_t} f_j,\dots, f_{d}\right)  \\
	& = 
	\sum_{t=1}^n\sum_{i\leq j<d+1}(-1)^i \big(
	\hbar m_G ( f_0,  D(f_1,\dots,f_{i-1},e_t,\dots,
	\frac{\del}{\del e_t} f_j,\dots, f_{d}))  \\
	& \quad
	-  D(\hbar m_G(f_0,f_1),\dots,f_{i-1},e_t,\dots,
	\frac{\del}{\del e_t} f_j,\dots, f_{d}) + \cdots \\
	& \quad	+ (-1)^{d} \hbar m_G( D(f_0,\dots,f_{i-1},e_t,\dots,
	\frac{\del}{\del e_t} f_j,\dots, f_{d-1}),f_d)
	\big).
\end{align*}
If $D$ vanishes if one of the arguments 
is a $\liealg{g}^*$-coordinate, then this simplifies to
\begin{align*}
  \Phi \circ  B&  (D) (f_0,\dots, f_d)
	= 
	\sum_{j=0}^d \big(
	\hbar m_G ( e_t,  D(f_0,\dots,f_{i-1},\dots,
	\frac{\del}{\del e_t} f_j,\dots, f_{d}))  \\
	& \quad
	-  D(\hbar m_G(e_t,f_0),\dots,f_{i-1},\dots,
	\frac{\del}{\del e_t} f_j,\dots, f_{d}) \big)  \\
	& \quad
	+ \sum_{j=1}^d D(\hbar m_G(f_0, e_t),\dots,f_{i-1},\dots,
	\frac{\del}{\del e_t} f_j,\dots, f_{d}) + \dots,
\end{align*}
where $e_t$ is always an argument of $\hbar m_G$. In particular, we 
know $\hbar m_G(e_i,e_j)=\frac{\hbar}{2} [e_i,e_j]$ and we see that 
the above sum vanishes if one of the functions $f_i$ is a $\liealg{g}^*$-coordinate, i.e. 
$\Phi \circ B (D)$ has the same vanishing property as $D$. 
The same holds for $\widetilde{\Phi} \circ B(D)$ and this shows by induction 
that the image of $i_\hbar$ has the same property and the proposition is shown.
%
%
\end{proof}


Considering $(Q_C)^1_2$, we can simplify \eqref{eq:HTTCartanDiffopDiff} to
\begin{align*}
  (Q_C)^1_2
	& =	
	\sum_{k=1}^\infty p \circ Q^1_2 \circ 
	\left( (\widetilde{\Phi}  \circ B)^k\circ i \vee i + 
	i \vee (\widetilde{\Phi}  \circ B)^k\circ i  \right)
	+ p \circ Q^1_2 \circ (i\vee i),
	\end{align*}
where the last term is the usual Gerstenhaber bracket. This is clear since 
$\widetilde{\Phi} $ adds a differential in $\liealg{g}^*$-direction and 
the bracket can only eliminate 
it on one argument. Recall that we also have the canonical projection 
$\pr \colon \left( \prod_{i=0}^\infty (\Sym^i \liealg{g} \otimes 
\Dpoly(C))\right)^\group{G} \rightarrow \Dpoly(M_\red)$ which projects 
first to symmetric degree zero and then restricts to $\Cinfty(C)^\group{G}\cong 
\Cinfty(M_\red)$. It is a 
DGLA morphism with respect to classical structures, i.e. Hochschild 
differentials and Gerstenhaber brackets. We extend it $\hbar$-linearly and 
can show that it is also a DLGA morphism 
with respect to the deformed DGLA structure $Q_C$:

\begin{proposition}
\label{prop:prProjectionfromTransferredStructure}
  The projection induces a DGLA morphism
  \begin{equation}
    \label{eq:PrFromCartanDGLADpoly}
		\pr \colon 
		\left(\left( \prod_{i=0}^\infty 
		(\Sym^i \liealg{g} \otimes \Dpoly(C))\right)^\group{G}[[\hbar]], Q_C\right)
		\longrightarrow 
		(\Dpoly(M_\red)[[\hbar]],\del, [\argument,\argument]_G).
  \end{equation}
\end{proposition}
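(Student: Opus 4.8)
The plan is to use that, by Proposition~\ref{prop:HigherBracketsDtayvanish}, the transferred structure $Q_C$ has no structure maps beyond $(Q_C)_1^1 = -\del_\hbar$ and $(Q_C)_2^1$, so that verifying $\pr$ is a strict $L_\infty$-morphism (hence a DGLA morphism) reduces to exactly two conditions: compatibility with the differentials, $\pr\circ\del_\hbar = \del\circ\pr$, and compatibility with the brackets, $\pr\circ(Q_C)_2^1 = [\argument,\argument]_G\circ(\pr\vee\pr)$ up to the sign convention of Example~\ref{ex:curvedlie}. Throughout I would exploit that $\pr$ is already a DGLA morphism for the \emph{classical} structures $(\del,[\argument,\argument]_G)$, as recalled in the text, and that $\pr$ factors as the projection to symmetric degree zero followed by the restriction of the resulting invariant operator to $\group{G}$-invariant functions on $C$, identified with $\Cinfty(M_\red)$; pulled back to $C\times\liealg{g}^*$ these reduced functions are $\liealg{g}^*$-independent and are annihilated by every fundamental vector field $\Lie_{(e_i)_C}$.

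For the differential I would write $\del_\hbar = \del + \delta$ with $\delta$ as in Proposition~\ref{prop:DtayHomPerDiff}. Since $\del$ preserves the symmetric degree and acts only on the $\Dpoly(C)$-factor, the classical compatibility already gives $\pr\circ\del = \del\circ\pr$, so everything reduces to $\pr\circ\delta = 0$. This is a direct cancellation: on a symmetric degree zero element $1\otimes D$ one has $\Delta(1)=1\otimes1$, hence $\Lie_{1_{(2)}}=\id$ and the two terms of $\delta(1\otimes D)$ cancel; on an element $P\otimes D$ with $\deg_{\Sym\liealg{g}}P\geq1$ the second term $-(-1)^dP\otimes D\cup\id$ is killed by the projection to symmetric degree zero, while in the first term only the $P_{(1)}=1$ part of $\Delta(P)$ survives that projection, producing $(-1)^d\,1\otimes D\cup\Lie_P$ with $P$ of positive degree, so that $\Lie_P$ is built from fundamental vector fields and annihilates the invariant argument. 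Hence $\pr\circ\delta=0$ and the differentials match.

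For the bracket I would use the explicit expansion of $(Q_C)_2^1$ recorded after Proposition~\ref{prop:HigherBracketsDtayvanish}, splitting it into the classical term $p\circ Q_2^1\circ(i\vee i)$ and the correction terms $p\circ Q_2^1\circ\big((\widetilde{\Phi}\circ B)^k\circ i\vee i + i\vee(\widetilde{\Phi}\circ B)^k\circ i\big)$ for $k\geq1$. Writing $\pi_\red = \pr\circ p$ for the full projection onto $\Dpoly(M_\red)$, the classical term yields $[\pr(\argument),\pr(\argument)]_G$ because $p$, $i$ and $\pr$ are classical DGLA morphisms and $Q_2^1$ is, up to sign, the Gerstenhaber bracket. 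It then remains to show $\pi_\red\circ Q_2^1\circ\big((\widetilde{\Phi}\circ B)^k\circ i(x)\vee i(y)\big)=0$ for $k\geq1$ and symmetrically. The mechanism I expect here mirrors the one for $\delta$: each factor $B=[\hbar m_G,\argument]$ feeds in the higher Gutt product, which by Proposition~\ref{prop:poductonUniversalEnvelopingAlg} and Lemma~\ref{lem:GuttProduct} is assembled from the anchor $\Lie$, i.e. from fundamental vector fields, while each $\widetilde{\Phi}$ inserts a coordinate $e_t$ together with a derivative in $\liealg{g}^*$-direction, so that on $\liealg{g}^*$-independent $\group{G}$-invariant functions the surviving contributions all carry either a residual $\liealg{g}^*$-derivative or a fundamental vector field, both annihilating such functions after $\pi_\red$.

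The hard part will be this last step: one must track, through the single Gerstenhaber bracket with $i(y)$, the interaction between the $\liealg{g}^*$-derivatives introduced by $\widetilde{\Phi}$ and the polynomial $\liealg{g}^*$-dependence encoded by the $\Sym\liealg{g}$-factor of $i(y)$ (so that those derivatives are not simply eliminated), and then confirm that whatever survives the projection $p$ lies either in positive symmetric degree or carries a fundamental vector field in its $\Dpoly(C)$-part, hence dies under $\pr$. I would organize this as an induction on $k$ parallel to the vanishing argument in the proof of Proposition~\ref{prop:HigherBracketsDtayvanish}, using there that $\hbar m_G(e_i,e_j)=\tfrac{\hbar}{2}[e_i,e_j]$ and that the $e_t$ inserted by $\widetilde{\Phi}$ always becomes an argument of $m_G$, in order to propagate the "vanishing on reduced functions" property from $i$ through all the correction terms.
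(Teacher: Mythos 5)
Your reduction of the problem is the same as the paper's: by Proposition~\ref{prop:HigherBracketsDtayvanish} only $(Q_C)_1^1$ and $(Q_C)_2^1$ survive, so one must check $\pr\circ\del_\hbar=\del\circ\pr$ and compatibility of $\pr$ with the binary bracket. Your treatment of the differential is complete and correct, and agrees with what the paper uses implicitly: after projecting to $\Sym^0\liealg{g}$ only terms $1\otimes D\cup\Lie_{P}$ with $P\in\Sym^{>0}\liealg{g}$ survive in $\pr\circ\delta$, and $\Lie_P$ annihilates the invariant last argument, so $\pr\circ\delta=0$.

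The gap is in the bracket part, and you have flagged it yourself: the vanishing of the correction terms $\pr\circ p\circ Q_2^1\bigl((\widetilde{\Phi}\circ B)^k i\vee i + i\vee(\widetilde{\Phi}\circ B)^k i\bigr)$ for $k\geq 1$ is the entire mathematical content of the proposition, and your proposed mechanism for it would fail as stated. Expanding $\Phi\circ B$ as in the proof of Proposition~\ref{prop:HigherBracketsDtayvanish}, the dangerous contribution is $\hbar m_G\bigl(e_t, D(f_0,\dots)\bigr)$, in which the fundamental vector field $\Lie_{(e_t)_C}$ ends up \emph{post-composed} with the operator $\pr|_{\Sym^0\liealg{g}}(\widetilde{\Phi}\circ B)^{k-1}D$, i.e. it acts on the output of that operator composed with the insertion $(Q\otimes D')$ --- and that output is \emph{not} invariant, because the projection to $\Sym^0\liealg{g}$ destroys equivariance. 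So the principle ``fundamental vector fields annihilate the invariant arguments after $\pi_\red$'' does not apply to these terms, and your induction on $k$ does not close. What the paper actually does at this point is a cancellation by equivariance: it splits $\hbar m_G(e_t,\argument)=-\hbar\Lie_{(e_t)_C}+\hbar m_{\liealg{g}}(e_t,\argument)$, combines the post-composed $\Lie_{(e_t)_C}$ with the copy acting in the insertion slot into a Gerstenhaber commutator $\bigl[\Lie_{(e_t)_C},\pr|_{\Sym^0\liealg{g}}(\widetilde{\Phi}\circ B)^{k-1}D\bigr]_G$, rewrites this commutator via invariance as $\bigl[-f_{tk}^j e_j\frac{\del}{\del e_k},\pr|_{\Sym^0\liealg{g}}(\widetilde{\Phi}\circ B)^{k-1}D\bigr]_G$, and checks that the resulting insertion $f_{tk}^j e_j\frac{\del}{\del e_k}\frac{\del}{\del e_t}Q\otimes D'$ dies after $\pr\circ p$. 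Only after this cancellation is the surviving term $\hbar m_{\liealg{g}}(e_t,\frac{\del}{\del e_t}Q)\otimes D'$, which lies in $\Sym^{>0}\liealg{g}$ and feeds the induction you envisage. This equivariance/commutator step is the missing idea; without it the terms carrying a fundamental vector field on a non-invariant output cannot be discarded.
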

\begin{proof}
By the explicit form of the differential $(Q_C)^1_1 = -\del_\hbar = -(\del + \delta)$ from 
Proposition~\ref{prop:DtayHomPerDiff} we 
know that $\pr \circ \del_\hbar = \pr \circ \del = \del \circ \pr$. Thus it only remains to show 
that we have $\pr \circ (Q_C)^1_2 = Q^1_2 \circ \pr^{\vee 2}$, which is equivalent to showing 
\begin{align}
\tag{$*$}
\label{eq:HigherBinaryBracket}
  \pr \circ \sum_{k=1}^\infty p \circ Q^1_2 \circ 
	\left( (\widetilde{\Phi}  \circ B)^k\circ i \vee i + 
	i \vee (\widetilde{\Phi}  \circ B)^k\circ i  \right)
	=
	0.
\end{align}
In the proof of Proposition~\ref{prop:HigherBracketsDtayvanish} we computed 
$\Phi \circ B (D)$ of some $D \in D_\Tay^d(C\times\liealg{g}^*)$ and 
we saw that the image of $i$ vanishes if one inserts a $\liealg{g}^*$-coordinate 
and that $\Phi \circ B$ preserves this property. Therefore, we 
got for such a $D$ that vanishes if one of the arguments is $e_t$ 
\begin{align}
\tag{$**$}
\label{eq:ExpressionPhiB}
\begin{split}
  \Phi \circ  B&  (D) (f_0,\dots, f_d)
	= 
	\sum_{j=0}^d \big(
	\hbar m_G ( e_t,  D(f_0,\dots,f_{i-1},\dots,
	\frac{\del}{\del e_t} f_j,\dots, f_{d}))  \\
	& \quad
	-  D(\hbar m_G(e_t,f_0),\dots,f_{i-1},\dots,
	\frac{\del}{\del e_t} f_j,\dots, f_{d}) \big)  \\
	& \quad
	+ \sum_{j=1}^d D(\hbar m_G(f_0, e_t),\dots,f_{i-1},\dots,
	\frac{\del}{\del e_t} f_j,\dots, f_{d}) - \dots  \\
	& \quad 
	-D(f_0,\dots, ,f_{d-1},\hbar m_G(e_t, \frac{\del}{\del e_t} f_d)),
\end{split}
\end{align}
where $f_0,\dots, f_d \in \prod_i(\Sym^i\liealg{g}\tensor\Cinfty(C))$. 
Let us consider now \eqref{eq:HigherBinaryBracket} applied to homogeneous 
$P\otimes D \vee Q \otimes D'$, 
where $P,Q \in \Sym \liealg{g}$ and $D,D' \in \Dpoly(C)[[\hbar]]$. 
At first we note that this is zero if both 
$P \neq 1 \neq Q$ since the 
Gerstenhaber bracket can cancel at most one term. Similarly, it is zero if both $P =1 =Q$.
Thus we consider w.l.o.g. $ D, Q\otimes D'$ with $Q \neq 1$ and 
$D \in (D_\poly^d(C))^{\group{G}}[[\hbar]]$, where the only possible contributions 
are
\begin{align*}
  \pr \circ p \circ Q^1_2 \left( 
	( (\widetilde{\Phi} \circ B)^k D)\vee (Q \otimes D')\right)
	=
	 (-1)^{d + (dd')}   \pr \circ p 
	\left( ( (\widetilde{\Phi} \circ B)^k D)\circ  (Q \otimes D')\right)	
\end{align*}
for all $k\geq 1$. Note that, up to a sign, this is  
$((\widetilde{\Phi} \circ B)^k D)\circ (Q \otimes D')$ 
applied to 
invariant functions $\Cinfty(C)^\group{G}[[\hbar]]$ and then projected 
to $\Sym^0\liealg{g}$. But on invariant functions 
the vertical vector fields and the differentials in 
$\liealg{g}^*$-direction vanish, and we have only one slot where they can give a non-trivial contribution, namely $Q\otimes D'$. We fix the symmetric degree 
$Q \in \Sym^i\liealg{g}$ and get 
\begin{align*}
  \pr \circ  p \circ Q^1_2 \left( 
	( (\widetilde{\Phi}  \circ B)^k D)\vee (Q \otimes D')   \right) 
	& =
	\frac{(-1)^{d + (dd')}}{i}    \pr \circ p \left(
	(\Phi (B (\widetilde{\Phi} B)^{k-1} D)_{i} )
	\circ (Q \otimes D' )\right)   \\
	& = 
	\frac{(-1)^{d + (dd')}}{i}   \pr \circ p \left(
	(\Phi B (\widetilde{\Phi} B)^{k-1} D) \circ 
  ( Q \otimes D'	)\right).  
\end{align*}
Here $(B (\widetilde{\Phi} B)^{k-1} D)_{i}$ denotes the component of 
$B (\widetilde{\Phi} B)^{k-1} D$ with $i$ differentiations in $\liealg{g}^*$-direction. 
The $1/i$ comes from the degree of the homotopy \eqref{eq:totalhomotopy} since we 
have no $\Sym \liealg{g}^*$-degree and since the only term that can be non-trivial 
is the one with $i$ differentiations in $\liealg{g}^*$-direction applied to $Q$.
We compute with \eqref{eq:ExpressionPhiB}
\begin{align*}
  \pr \circ  p \circ &  Q^1_2 \left( 
	( (\widetilde{\Phi}  \circ B)^k D)\vee (Q \otimes D')   \right)   
	=
	\frac{(-1)^{d + (dd')}}{i}   \pr \circ p \left(
	(\Phi B (\widetilde{\Phi} B)^{k-1} D) \circ 
  (Q \otimes D')	\right) \\
	& =
	\frac{(-1)^{d + (dd')}}{i}  \pr \circ p  \bigg(
	\left( - \hbar \Lie_{(e_t)_C} \circ \pr\at{\Sym^0\liealg{g}} (\widetilde{\Phi} \circ B)^{k-1} D \circ 
	\frac{\del}{\del e_t}\right) \circ (Q\otimes D')  \\
	& \quad - 
	\left( \pr\at{\Sym^0\liealg{g}} (\widetilde{\Phi} \circ B)^{k-1} D \circ 
	(\hbar m_G(e_t, \frac{\del}{\del e_t}\argument))\right) \circ (Q\otimes D')  \bigg)  \\
	& =
	\frac{(-1)^{d + (dd')}}{i}   \pr \circ p  \bigg( 
	\left( - \hbar \Lie_{(e_t)_C} \circ \pr\at{\Sym^0\liealg{g}} (\widetilde{\Phi} \circ B)^{k-1} D \right) 
	\circ (\frac{\del}{\del e_t}Q\otimes D')  \\
	& \quad - 
	\left( \pr\at{\Sym^0\liealg{g}} (\widetilde{\Phi} \circ B)^{k-1} D \right) \circ 
	( (\hbar m_G(e_t,\frac{\del}{\del e_t}\argument) )\circ (Q\otimes D') ) \bigg).
\end{align*}
But we know $\hbar m_G(e_t,\argument) = - \hbar \Lie_{(e_t)_C} + \hbar m_{\liealg{g}}(e_t, 
\argument)$, where $\hbar m_{\liealg{g}}$ denotes the higher components of the Gutt product 
on $\liealg{g}^*$. Moreover, we have by the invariance 
\begin{align*}
  - [\Lie_{(e_t)_C}, \pr\at{\Sym^0\liealg{g}} (\widetilde{\Phi} \circ B)^{k-1} D]_G
	=
	\left[ - f_{t k}^j e_j \frac{\del}{\del e_k}, \pr\at{\Sym^0\liealg{g}} (\widetilde{\Phi} \circ B)^{k-1} D
	\right]_G
\end{align*}
and thus
\begin{align*}
  \hbar \pr \circ  & p  \left( 
	\left(- [\Lie_{(e_t)_C}, \pr\at{\Sym^0\liealg{g}} 
	(\widetilde{\Phi} \circ B)^{k-1} D]_G \right)
	\circ (\frac{\del}{\del e_t} Q \otimes D')\right) \\
	& 
	=
	 \hbar\pr \circ p \left( 
	\left( \pr\at{\Sym^0\liealg{g}} (\widetilde{\Phi} \circ B)^{k-1} D \circ 
	\left(f_{t k}^j e_j \frac{\del}{\del e_k}\right)
	\right) \circ (\frac{\del}{\del e_t}Q\otimes D')\right)  \\
	& =
	\hbar	\pr \circ p 
	\left( \left(\pr\at{\Sym^0\liealg{g}} (\widetilde{\Phi} \circ B)^{k-1} D \right) 
	\circ (f_{t k}^j e_j \frac{\del}{\del e_k}\frac{\del}{\del e_t} Q\otimes D')\right)
	=
	0.
\end{align*}
The only remaining terms are 
\begin{align*}
  \pr \circ & p \circ Q^1_2 \left( 
	(  (\widetilde{\Phi} \circ B)^k D)\vee (Q \otimes D')\right)  
	=
	(-1)^{d + (dd')}   \pr \circ p 
	\left( (\pr\at{\Sym^0\liealg{g}}  (\widetilde{\Phi} \circ B)^k D)\circ  
	(Q \otimes D')\right) \\
	& =
	 - \frac{ (-1)^{d + (dd')}}{i}\pr \circ p  \bigg(
	\left( \pr\at{\Sym^0\liealg{g}} (\widetilde{\Phi} \circ B)^{k-1} D \right) 
	\circ  \left(\hbar m_{\liealg{g}}(e_t\frac{\del}{\del e_t} Q) \otimes D'\right) \bigg).
\end{align*}
We know that $\hbar m_{\liealg{g}}(e_t, \frac{\del}{\del e_t} Q)$ is either zero or 
in $\Sym^{>0} \liealg{g}$ and the statement follows by induction.
\end{proof}

In particular, we can compose this projection $\pr$ with the $L_\infty$-projection 
from Proposition~\ref{prop:HTTDTayCartan} that we constructed with the homotopy transfer 
theorem. Summarizing, we have shown:

\begin{theorem}
\label{thm:NotTwistedRedMorph}
  There exists an $L_\infty$-morphism
	\begin{equation}
		\label{eq:DredTay}
		D_\red
		=
		\pr \circ P \colon 
		\left(D_\Tay(C\times\liealg{g}^*)[[\hbar]],[\star_G-J,\argument],
		[\argument,\argument]\right) 
		\longrightarrow
		(\Dpoly(M_\red)[[\hbar]],\del, [\argument,\argument]_G).
	\end{equation}
\end{theorem}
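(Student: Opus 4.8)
The plan is to realize $D_\red$ as a composition of two $L_\infty$-morphisms, exploiting that every strict DGLA morphism is in particular an $L_\infty$-morphism and that $L_\infty$-morphisms compose as morphisms of the cofree coalgebras $S^c$. Since the analytic and combinatorial work has already been carried out in the three preceding propositions, the theorem is essentially a formal assembly statement, and I would organize it accordingly.

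First I would invoke Proposition~\ref{prop:HTTDTayCartan}, which supplies the $L_\infty$-quasi-isomorphism $P$ from $\left(D_\Tay(C\times\liealg{g}^*)[[\hbar]],[\star_G-J,\argument],[\argument,\argument]\right)$ to $\left(\left( \prod_{i=0}^\infty (\Sym^i \liealg{g} \otimes \Dpoly(C))\right)^\group{G}[[\hbar]],Q_C\right)$, produced by the homotopy transfer theorem applied to the perturbed retract \eqref{eq:defomedRetractDtay}. At this stage the target is a priori only an $L_\infty$-algebra with a potentially nontrivial tower of brackets $Q_C$.

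The decisive simplification is provided by Proposition~\ref{prop:HigherBracketsDtayvanish}: because $(Q_C)^1_{k+1}=0$ for all $k\geq 2$, the transferred structure $Q_C$ collapses to an ordinary DGLA structure, carrying only the differential $(Q_C)^1_1=-\del_\hbar$ and the binary bracket $(Q_C)^1_2$. This is exactly the point on which the remaining argument hinges: were genuine higher brackets present, the projection $\pr$ could not be a strict DGLA morphism, and one would instead be forced to construct the full family of Taylor coefficients of $D_\red$ by hand and verify the $L_\infty$-relations directly — which is the only place a real obstacle could arise. Here that obstacle is removed upstream.

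I would then apply Proposition~\ref{prop:prProjectionfromTransferredStructure}, which shows that the canonical projection $\pr$ is a DGLA morphism from $\left(\left( \prod_{i=0}^\infty (\Sym^i \liealg{g} \otimes \Dpoly(C))\right)^\group{G}[[\hbar]], Q_C\right)$ onto $(\Dpoly(M_\red)[[\hbar]],\del, [\argument,\argument]_G)$. Viewing this DGLA morphism as an $L_\infty$-morphism whose only nonvanishing Taylor coefficient is the first, and lifting it to the associated coalgebra morphism of $S^c$, I would compose it with $P$. The composite $D_\red=\pr\circ P$ is again a counital coalgebra morphism, and it intertwines the codifferential on the Taylor side with the codifferential induced by $(\del,[\argument,\argument]_G)$ on $\Dpoly(M_\red)[[\hbar]]$; its Taylor coefficients are simply $(D_\red)_n=\pr\circ P_n$. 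No further work is expected at this final step, as the two nontrivial inputs — that the transferred structure is a DGLA and that $\pr$ respects it — have already been secured.
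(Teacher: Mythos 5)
Your proposal is correct and follows exactly the paper's own route: the theorem is stated there as a summary ("we can compose this projection $\pr$ with the $L_\infty$-projection from Proposition~\ref{prop:HTTDTayCartan}"), relying on Proposition~\ref{prop:HigherBracketsDtayvanish} to know the transferred structure $Q_C$ is an honest DGLA and on Proposition~\ref{prop:prProjectionfromTransferredStructure} to know $\pr$ respects it, then composing the coalgebra morphisms. Your additional observation that the vanishing of higher brackets is what allows $\pr$ to be viewed as a strict $L_\infty$-morphism is precisely the implicit logical hinge of the paper's assembly.
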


Finally, as in 
 the polyvector field case in \cite{esposito.kraft.schnitzer:2020a:pre},
we can twist the above morphism to obtain 
an $L_\infty$-morphism from the curved equivariant polydifferential operators into the 
Cartan model and therefore also into the polydifferential operators on $M_\red$, see Proposition~\ref{prop:twistinglinftymorphisms} for the basics 
of the twisting procedure.

\begin{proposition}
  Twisting the reduction $L_\infty$-morphism $D_\red$ from \eqref{eq:DredTay} with $-\hbar m_G$ 
	yields an $L_\infty$-morphism
	\begin{equation}
		\label{eq:DredTaycurved}
		D_\red^{-\hbar m_G}
		\colon 
		\left(D_\Tay(C\times\liealg{g}^*)[[\hbar]],\hbar\lambda, \del + [-J,\argument],
		[\argument,\argument]\right) 
		\longrightarrow
		(\Dpoly(M_\red)[[\hbar]],\del, [\argument,\argument]_G),
	\end{equation}
	where $\lambda = e^i \otimes (e_i)_M$ denotes the curvature.	
\end{proposition}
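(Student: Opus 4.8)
The plan is to realize the claimed morphism as the twist of $D_\red$ by the degree-one element $\pi = -\hbar m_G$, using the structure maps \eqref{eq:twisteslinftymorphism} from the twisting procedure of Proposition~\ref{prop:twistinglinftymorphisms}. The conceptual starting point is the observation that $\hbar m_G$ is a Maurer--Cartan element of the curved DGLA \eqref{eq:DredTaycurved}: the associativity of $\star_G = \mu + \hbar m_G$ gives $\del(\hbar m_G) + \frac{1}{2}[\hbar m_G,\hbar m_G] = 0$, while Lemma~\ref{lem:GuttProduct} translates into the Gerstenhaber identity $[J, m_G]_\liealg{g} = \lambda$, so that $\hbar\lambda + (\del + [-J,\argument])(\hbar m_G) + \frac{1}{2}[\hbar m_G,\hbar m_G] = \hbar\lambda - \hbar\lambda = 0$. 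Twisting \eqref{eq:DredTaycurved} by this Maurer--Cartan element produces exactly the flat DGLA $(D_\Tay(C\times\liealg{g}^*)[[\hbar]], [\star_G - J,\argument], [\argument,\argument])$ that serves as the source of $D_\red$ in \eqref{eq:DredTay}, so twisting $D_\red$ by $-\hbar m_G$ should invert this identification on the source side.

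Concretely, I would first check the source of $D_\red^{-\hbar m_G}$. Its differential is $[\star_G - J,\argument] + [-\hbar m_G,\argument] = [\mu - J,\argument] = \del + [-J,\argument]$, and its curvature, obtained from the flat source via $R^\pi = [\star_G - J, -\hbar m_G] + \frac{1}{2}[-\hbar m_G, -\hbar m_G]$, collapses to $\hbar\lambda$ by the very same two inputs (associativity and $[J,m_G]_\liealg{g} = \lambda$). Hence the source is the curved DGLA appearing in \eqref{eq:DredTaycurved}, as required.

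It then remains to identify the target. By the twisting procedure the target is $(\Dpoly(M_\red)[[\hbar]], \del + [S,\argument], [\argument,\argument]_G)$ with $S = D_\red^1(\cc{\exp}(-\hbar m_G)) = \sum_{n\geq 1} \frac{1}{n!} (D_\red)_n\bigl((-\hbar m_G)^{\vee n}\bigr)$, so the proposition follows once I show $S = 0$; this is the main obstacle. Writing $D_\red = \pr \circ P$ and using the $\hbar$-linearity of $\pr$, I would prove $S = \pr\bigl(P^1(\cc{\exp}(-\hbar m_G))\bigr) = 0$ termwise. The linear term vanishes because the higher Gutt product $m_G$ is trivial on two arguments pulled back from $\Cinfty(C)$ in symmetric degree zero, so already $\pr \circ p(-\hbar m_G) = 0$. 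For $n \geq 2$ each coefficient $P_n$ factors through the extended homotopy $H_n$, whose building block $h_\hbar$ is assembled from $\widehat{h}$ and $\Phi$ and therefore always introduces either a contraction with a $\liealg{g}^*$-coordinate or a differentiation in $\liealg{g}^*$-direction; by the vanishing mechanism developed in the proofs of Proposition~\ref{prop:HigherBracketsDtayvanish} and Proposition~\ref{prop:prProjectionfromTransferredStructure}, such contributions are annihilated by $\pr$, which projects to symmetric degree zero and restricts to $\Cinfty(C)^\group{G}$, where in addition the fundamental vector fields produced by $m_G$ vanish. Therefore $S = 0$, the target is the flat DGLA $(\Dpoly(M_\red)[[\hbar]], \del, [\argument,\argument]_G)$, and $D_\red^{-\hbar m_G}$ is the desired $L_\infty$-morphism. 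The delicate point is precisely the bookkeeping for the terms with $n \geq 2$: one must ensure that the Gerstenhaber brackets and intermediate projections entering the homotopy-transfer formula for $P_n$ cannot strip away the $\liealg{g}^*$-structure before $\pr$ acts, so that both the symmetric-degree-zero projection and the invariant-function restriction indeed force the contribution to vanish.
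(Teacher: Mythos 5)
Your overall framework coincides with the paper's: twist $D_\red$ by $-\hbar m_G$ via Proposition~\ref{prop:twistinglinftymorphisms}, check that the twisted source acquires the differential $[\star_G - J - \hbar m_G,\argument] = \del + [-J,\argument]$ and the curvature $\hbar\lambda$ (your computation via associativity of $\star_G$ and $[J,m_G]_\liealg{g} = \lambda$ is exactly the paper's check $e^i \otimes [-e_i,-\hbar m_G]_G = \hbar\lambda$ using Lemma~\ref{lem:GuttProduct}), and then reduce everything to the vanishing of
\begin{equation*}
  S
  =
  \sum_{k\geq 1} \frac{(-\hbar)^k}{k!}\,(D_\red)^1_k(m_G\vee\cdots\vee m_G).
\end{equation*}
Up to and including the $k=1$ term your argument is correct.

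The genuine gap is the vanishing of the terms with $k\geq 2$, which is the heart of the proof, and your treatment of it does not go through as stated. You claim these contributions are ``annihilated by $\pr$'' and appeal to the mechanisms of Propositions~\ref{prop:HigherBracketsDtayvanish} and~\ref{prop:prProjectionfromTransferredStructure}; but those propositions address different compositions ($i_\hbar$ evaluated on $\liealg{g}^*$-coordinates, and the compatibility $\pr\circ(Q_C)^1_2 = Q_2^1\circ\pr^{\vee 2}$), neither of which controls $P^1_k(m_G^{\vee k})$, and you yourself flag the required bookkeeping as unresolved rather than carrying it out. The paper closes this step with a different, stronger statement: by induction along the recursion $P^1_{k+1} = \bigl(\sum_{\ell=2}^{k+1}(Q_C)^1_\ell\circ P^\ell_{k+1} - P^1_k\circ Q^k_{k+1}\bigr)\circ H_{k+1}$, one shows that $P^1_k(x_1\vee\cdots\vee x_k)=0$ whenever \emph{every} argument $x_i$ differentiates its function-entries in $\liealg{g}^*$-direction. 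This property holds for $m_G$ (every term of the higher Gutt product must differentiate in $\liealg{g}^*$-direction, since $\star_G$ is the pointwise product on prolonged functions), it is preserved by the Gerstenhaber bracket of two operators that \emph{both} have it, by $B=[\hbar m_G,\argument]$, by $\widehat{h}$ and $\Phi$ (hence by $h_\hbar$ and $H_n$), and it is annihilated by $p=P^1_1$ at the bottom of the recursion. Consequently $P^1_k(m_G^{\vee k})=0$ already in the Cartan model, \emph{before} $\pr$ is applied: the vanishing mechanism lives in $p$, not in $\pr$. Locating it in $\pr$, as you do, is precisely what leaves your ``delicate point'' open, because a hypothetical nonzero component of $P^1_k(m_G^{\vee k})$ in $\Sym^0\liealg{g}\otimes\Dpoly(C)$ would not be killed by projecting to symmetric degree zero and restricting to invariant functions, and nothing in your argument excludes it.
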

\begin{proof}
At first we check that the curvature is indeed given by
\begin{align*}
  e^i \otimes [- e_i, -\hbar m_G ]_G
	& =
	e^i \otimes -[e_i, \argument ]_{\star_G}
	=
	e^i \otimes ( \hbar \Lie_{(e_i)_C} - \hbar \ad(e_i))
	=
	\hbar \lambda,
\end{align*}
compare Lemma~\ref{lem:GuttProduct}.
The only thing left to show is that the DGLA structure on $M_\red$ is not changed, 
which is equivalent to
\begin{align*}
  \sum_{k=1}^\infty \frac{(-\hbar)^k}{k!} (D_\red)^1_k (m_G \vee \cdots \vee m_G)
	=
	0.
\end{align*}
But using the explicit form of $P$ 
from Proposition~\ref{prop:HTTDTayCartan} we see inductively that 
$P$ vanishes if every argument has a differential in 
$\liealg{g}^*$-direction and the statement is shown.
\end{proof}

\begin{remark}
  In the polyvector field case from 
	\cite[Proposition~4.29]{esposito.kraft.schnitzer:2020a:pre} we 
	saw that the structure maps of the twisted morphism coincide with the structure maps 
	of the original one. In our case it is not clear, i.e. one might indeed have 
	$D_\red^{-\hbar m_G} \neq D_\red$.
\end{remark}

This reduction morphism can be used to obtain a reduction morphism of the equivariant 
polydifferential operators $D_\liealg{g}^\bullet(M)$ of more general manifolds 
$M \neq C\times \liealg{g}^*$. More explicitly, assuming that the action 
is proper around $C$ and free on $C$, we can restrict at first  
to $M_\nice \cong U_\nice \subset C\times\liealg{g}^*$, i.e. we have
\begin{align*}
	\cdot\at{U_\nice }\colon  
	(D_{\liealg{g}}(M)[[\hbar]],\hbar\lambda,\del^\liealg{g}-[J,\argument]_{\liealg{g}},
	[\argument,\argument]_{\liealg{g}})
	\longrightarrow
	 (D_{\liealg{g}}(U_\nice )[[\hbar]],
	 \hbar\lambda\at{U_\nice },\del^\liealg{g}-[J\at{U_\nice },\argument]_{\liealg{g}},
	[\argument,\argument]_{\liealg{g}}).
\end{align*}
But on $U_\nice$ we can perform the Taylor expansion that is a morphism of 
curved DGLAs
	\begin{align*}
	   D_{\liealg{g}^*}\colon 
	   (D_{\liealg{g}}(U_\nice )[[\hbar]],
	 	\hbar\lambda\at{U_\nice },\del^\liealg{g}-[J\at{U_\nice },\argument]_{\liealg{g}},
	 	[\argument,\argument]_{\liealg{g}})
	 	\rightarrow
	 	\big(D_\Tay (C\times \liealg{g}^*)[[\hbar]],\hbar\lambda,\del-[J,\argument],
	 	[\argument,\argument]\big).
\end{align*}
Finally, we can compose it with $D_\red^{-\hbar m_G}$ and obtain 
the following statement:

\begin{theorem}
  \label{thm:GlobalDred}
  The composition of the above morphisms results in an $L_\infty$-morphism 
\begin{equation}
\label{eq:GlobalDred}
  \mathrm{D}_\red \colon
  (D_{\liealg{g}}(M)[[\hbar]],\hbar\lambda,\del^\liealg{g}-[J,\argument]_\liealg{g}, 
	[\argument,\argument]_\liealg{g})
	\longrightarrow 
	(\Dpoly(M_\red)[[\hbar]],0,
\del,[\argument,\argument]_G),
\end{equation}	   
	called \emph{reduction $L_\infty$-morphism}. 
\end{theorem}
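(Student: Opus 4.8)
The plan is to observe that $\mathrm{D}_\red$ is by construction a composition of three maps, and to check that each is an $L_\infty$-morphism of curved DGLAs whose target agrees with the source of the next. Since the composition of (curved) $L_\infty$-morphisms is again one, the theorem then follows. Concretely, the three stages are the restriction $\cdot\at{U_\nice}$ to the tubular neighbourhood, the Taylor expansion $D_{\liealg{g}^*}$, and the twisted reduction morphism $D_\red^{-\hbar m_G}$ obtained in the preceding proposition.

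First I would dispose of the restriction. Since the action is proper around $C$ and free on $C$, there is a $\group{G}$-invariant open neighbourhood $M_\nice$ of $C$ together with an equivariant diffeomorphism $\Psi\colon M_\nice \to U_\nice \subseteq C\times\liealg{g}^*$ carrying $J$ to $\pr_{\liealg{g}^*}$. Restriction of polydifferential operators to the open set $M_\nice$ is a \emph{strict} DGLA morphism, because the Hochschild differential and the Gerstenhaber bracket are local and hence commute with restriction; moreover the curvature $\hbar\lambda$ and the differential $\del^\liealg{g}-[J,\argument]_\liealg{g}$ restrict to $\hbar\lambda\at{U_\nice}$ and $\del^\liealg{g}-[J\at{U_\nice},\argument]_\liealg{g}$, and the restriction of a $\group{G}$-invariant operator stays invariant. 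Composing with the identification induced by $\Psi$ then yields a strict morphism of curved DGLAs with exactly the source of $\mathrm{D}_\red$ as its domain.

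Next comes the Taylor expansion. The essential observation is that expanding around $C$ in the $\liealg{g}^*$-direction depends only on the $\infty$-jet along $C\times\{0\}$, so the fact that $U_\nice$ is merely an open neighbourhood and not all of $C\times\liealg{g}^*$ is immaterial: $D_{\liealg{g}^*}$ still lands in $D_\Tay(C\times\liealg{g}^*)$. As already recorded in the text it is an equivariant strict morphism of curved DGLAs, sending $\hbar\lambda\at{U_\nice}$ to $\hbar\lambda$ and $\del^\liealg{g}-[J\at{U_\nice},\argument]_\liealg{g}$ to $\del-[J,\argument]$. Its target is therefore precisely the source $(D_\Tay(C\times\liealg{g}^*)[[\hbar]],\hbar\lambda,\del+[-J,\argument],[\argument,\argument])$ of the twisted reduction morphism $D_\red^{-\hbar m_G}$. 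Since the latter is an $L_\infty$-morphism into $(\Dpoly(M_\red)[[\hbar]],0,\del,[\argument,\argument]_G)$ by the preceding proposition (a twist of Theorem~\ref{thm:NotTwistedRedMorph} via Proposition~\ref{prop:twistinglinftymorphisms}), composing the three maps produces the desired $\mathrm{D}_\red$, where the filtration by powers of $\hbar$ makes all the infinite sums defining the composed Taylor coefficients converge.

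The part requiring genuine care is the globalization, i.e. the matching of curved structures at the two junctions. Everything hinges on ensuring that restricting to $M_\nice$ and then Taylor-expanding reproduces \emph{exactly} the curved DGLA serving as the domain of the local construction — in particular that the curvature $\hbar\lambda$ and the momentum-map differential survive the identification $\Psi$ unchanged. This is precisely what the normal-form statement $\Psi\colon M_\nice \to U_\nice$ with $J=\pr_{\liealg{g}^*}$ is arranged to deliver; once it is invoked the two junctions line up and the theorem reduces to the standard compositionality of curved $L_\infty$-morphisms.
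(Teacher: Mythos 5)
Your proposal is correct and follows essentially the same route as the paper: the paper's own (implicit) proof is exactly the composition of the restriction $\cdot\at{U_\nice}$, the curved DGLA morphism $D_{\liealg{g}^*}$ given by Taylor expansion, and the twisted morphism $D_\red^{-\hbar m_G}$, using that the normal form $\Psi\colon M_\nice \to U_\nice$ identifies $J$ with $\pr_{\liealg{g}^*}$. The extra details you supply (locality of $\del$ and $[\argument,\argument]_G$ under restriction, jet-dependence of the Taylor expansion, $\hbar$-adic convergence of the composed Taylor coefficients) are routine verifications the paper leaves tacit, not a different argument.
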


\begin{remark}[Choices]
Note that the only non-canonical choice we made is a open neighbourhood of $C$ 
in $M$ which is diffeomorphic to a star shaped open neighbourhood of $C$ in $C\times 
\liealg{g*}$.  
Recall that the choice of this neighbourhood works as follows:
Take an arbitrary $\group{G}$-equivariant tubular neighbourhood embedding 
$\psi\colon \nu(C)\to U\subseteq M$, where $\nu(C)$ denotes the normal bundle. 
Then define  
\begin{align*}
\phi\colon \nu(C)\ni [v_p]\longmapsto (p,J(\psi([v_p]))\in C\times \liealg{g}^*
\end{align*}
which is close to $C$ a diffeomorphism. After some suitable restriction 
we obtain the identification. Nevertheless, we had to choose a 
$\group{G}$-equivariant tubular neighbourhood and any two choices differ by a
$\group{G}$-equivariant 
local diffeomorphism around $C$
	\begin{align*}
	A\colon C\times\liealg{g}^*\longrightarrow C\times\liealg{g}^*
	\end{align*}	 
which is, restricted to $C$ the identity. One can show that one has in the 
Taylor expansion
	\begin{align*}
	 D_{\liealg{g}^*} (A^*f)=\E^{X}  D_{\liealg{g}^*}(f)
	\end{align*}
for a vector field field $X\in\prod_{i\geq 1}  (\Sym^i\liealg{g}\tensor \mathfrak{X}(C))^\group{G}
\subseteq D_\Tay(C\times \liealg{g}^*)$.  Since any vector field  is closed,
$X$ does not derive in $\liealg{g}^*$-direction and $\lambda$ is central, 
we obtain an inner automorphism
	\begin{align*}
	\E^{[X,\argument]}
	\colon 
	\big(D_\Tay (C\times \liealg{g}^*)[[\hbar]],\hbar\lambda,\del-[J,\argument],
	 	[\argument,\argument]\big)
	\longrightarrow
	\big(D_\Tay (C\times \liealg{g}^*)[[\hbar]],\hbar\lambda,\del-[J,\argument],
	 	[\argument,\argument]\big)
	\end{align*}
of curved Lie algebras 
which acts trivially on the level of equivalence classes of Maurer-Cartan 
elements. 
We are moreover certain, that the two reduction $L_\infty$-morphisms are homotopic 
in a suitable curved setting, which, to our knowledge, is not developed yet.   
\end{remark}

As a last remark of this section, we want to mention a very interesting observation,
which is not directly connected to the rest of this paper and/or the results of it. 
Nevertheless, we felt that it can be interesting from many other different 
perspectives.

\begin{remark}[Cartan model]
One can show that DGLA structure $Q_C$ from Proposition \ref{prop:HTTDTayCartan} on 
$\prod_{i=0}^\infty (\Sym^i \liealg{g} \otimes \Dpoly(C))^\group{G}[[\hbar]]$ 
restricts to $(\Sym \liealg{g} \otimes \Dpoly(C))^\group{G}[\hbar]$ and hence 
can be evaluated at $\hbar =1$. Moreover, we still have the DGLA map 	
\begin{align*} 	
  \pr\colon (\Sym \liealg{g} \otimes \Dpoly(C))^\group{G} 	
	\longrightarrow 	
	\Dpoly(M_\red). 	
\end{align*} 
We want to sketch the proof of the fact that this is a quasi-isomorphism, 
which motivates us to interpret $(\Sym \liealg{g} \otimes \Dpoly(C))^\group{G}$ 
as a Cartan model for equivariant polydifferential operators, generalizing the 
Cartan model for equivariant polyvector fields from 
\cite[Section~4.2]{esposito.kraft.schnitzer:2020a:pre}. 

Picking a $\group{G}$-invariant 
covariant derivative (not necessarily torsion-free) for which the fundamental vector 
fields are flat in fiber direction one can, using the PBW-ismorphism for Lie
 algebroids (see \cite{Xu} and \cite{nistor.weinstein.xu:1999a})  
prove that there is an equivariant cochain map $K\colon \Dpoly(C)\to \Tpoly(C)$ and 
an equivariant homotopy $h\colon \Dpoly^\bullet(C)\to  \Dpoly^{\bullet-1}(C)$, such that    
\begin{equation}    
  \label{eq:DefRetHoch} 	 
	\begin{tikzcd}       
	\Tpoly(C) 			\arrow[rr, shift left, "\hkr"]      
	&&  			
	(\Dpoly(C),\partial) 			
	\arrow[loop right, 	distance=3em, start anchor={[yshift=1ex]east}, end anchor={[yshift=-1ex]east}]{}{h} 			
	\arrow[ll, shift left, "K"] 	  
	\end{tikzcd}   
	\end{equation} 
is a special deformation retract. Additionally, one can show that 	
\begin{align*} 	
  K(D_1\cup D_2)=K(D_1)\wedge K(D_2) 	
	\ 	\text{ and } 	\ 	K(\Lie_P)= 	
	\begin{cases} 	-P_C, & \text{ for } P\in \liealg{g}\subseteq \Sym\liealg{g}\\ 	
	0, & \text{ else} 	
	\end{cases} 	
\end{align*} 
for $D_1,D_2\in\Dpoly(C)$ and $P\in\Sym\liealg{g}$. We extend now \eqref{eq:DefRetHoch} to    
\begin{equation*} 	 
\begin{tikzcd}      
 ((\Sym\liealg{g}\otimes\Tpoly(C))^\group{G},0) 			
  \arrow[rr, shift left, "\hkr"]       &&  			
	((\Sym\liealg{g}\otimes\Dpoly(C))^\group{G},\partial) 			
	\arrow[loop right, 	distance=3em, start anchor={[yshift=1ex]east}, end anchor={[yshift=-1ex]east}]{}{h} 
	\arrow[ll, shift left, "K"] 	  
\end{tikzcd}  
\end{equation*} 
to obtain a special deformation retract. Now we include $\delta$ as in Proposition~\ref{prop:DtayHomPerDiff} 
and see it as a pertubation of $\partial$. One can show that the pertubation is small in the sense 
of the homological perturbation lemma as in \cite{crainic:2004a:pre}, and we obtain   
\begin{equation*} 	 
\begin{tikzcd}      
 ((\Sym\liealg{g}\otimes\Tpoly(C))^\group{G},\delta) 			
 \arrow[rr, shift left, "\widehat{\hkr}"]       
 &&  			
((\Sym\liealg{g}\otimes\Dpoly(C))^\group{G},\partial+\delta) 			
\arrow[loop right, 	distance=3em, start anchor={[yshift=1ex]east}, end anchor={[yshift=-1ex]east}]{}{\widehat{h}} 			
\arrow[ll, shift left, "\widehat{K}"] 	 
\end{tikzcd}  
\end{equation*} 
where $\delta$ is the differential 
\begin{align*} 
  \delta(P\otimes X)
	=
	\ins(e^{i})P\tensor (e_i)_C\wedge X 
\end{align*}  
obtained in \cite[Definition~4.14]{esposito.kraft.schnitzer:2020a:pre} 
on $(\Sym\liealg{g}\otimes\Tpoly(C))^\group{G}$. 
Finally, one can show that    
\begin{center}  	
\begin{tikzcd}  	
  ((\Sym\liealg{g} \tensor \Tpoly(C))^\group{G},\delta)  	 
  \arrow[rr, shift left, "\widehat{\hkr}"]  	 \arrow[dd]  	&&  	
	((\Sym\liealg{g} \tensor \Dpoly(C))^\group{G},\partial+\delta)  	\arrow[dd]  	\\  
		&&  	\\  	(\Tpoly(M_\red),0)  	\arrow[rr, shift left, "\hkr"]  	
		&&  	 (\Dpoly(M_\red),\partial)\\  	
\end{tikzcd}  	
\end{center}
commutes and both of the horizontal maps are quasi-isomorphisms as well 
as the left vertical one which implies the claim. 
\end{remark}

\section{Comparison of the Reduction Procedures}
\label{sec:ComparisonofRedProd}

At the level of Maurer-Cartan elements, we know that the 
$L_\infty$-morphism $\mathrm{D}_\red$ from Theorem~\ref{thm:GlobalDred} 
induces a map from equivariant star products $(\star,H)$ with 
quantum momentum map $H = J + O(\hbar)$ on $M$ to star products $\star_\red$ on 
the reduced manifold $M_\red$. We conclude with a comparison of this 
reduction procedure with the reduction of formal Poisson structures via the 
quantized Koszul complex as in \cite{bordemann.herbig.waldmann:2000a,gutt.waldmann:2010a}, see also our adapted version in 
Appendix~\ref{sec:BRSTReductionStarProducts}.

We assume for simplicity $M= C\times \liealg{g}^*$ and work in the Taylor 
expansion of the equivariant polydifferential operators. Moreover, 
we identify $\Cinfty(C)$ with $\prol \Cinfty(C)\subset 
\Cinfty(C\times\liealg{g}^*)$. Let us start with an equivariant star 
product $(\star,H= J + \hbar H')$ on $C\times \liealg{g}^*$, which means 
that $\hbar \pi_\star -\hbar H' = \star - \star_G - (H-J)$ is 
Maurer-Cartan element in 
\begin{equation*}
				   \left(D_\Tay(C\times\liealg{g}^*)[[\hbar]],
					[\star_G-J,\argument],[\argument,\argument]\right).
\end{equation*}

\begin{proposition}
Defining $I_1^1 = i_\hbar$ and 
$I_k^1 = h_\hbar \circ  Q_{2}^1 \circ I^2_{k+1}$ gives an $L_\infty$-morphism
\begin{equation}
	  I
		\colon
			\left(\left( \prod_{i=0}^\infty (\Sym^i \liealg{g} \otimes 
			\Dpoly(C))\right)^\group{G}[[\hbar]],Q_C\right)
		\longrightarrow
		\left(D_\Tay(C\times\liealg{g}^*)[[\hbar]],[\star_G-J,\argument],
		[\argument,\argument]\right) .
\end{equation}
Moreover, one $I$ is a quasi-inverse of the $L_\infty$-projection 
$P$ from Proposition~\ref{prop:HTTDTayCartan} and one has 
$P \circ I = \id$.
\end{proposition}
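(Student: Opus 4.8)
The plan is to obtain $I$ as the $L_\infty$-inclusion dual to the projection $P$ of Proposition~\ref{prop:HTTDTayCartan}, produced by the same homotopy transfer theorem (Theorem~\ref{thm:HTTJonas}) applied to the homotopy retract \eqref{eq:defomedRetractDtay}. Concretely, I would extend $i_\hbar$ to a coalgebra morphism and $h_\hbar$ to the homotopy $H$ on the symmetric coalgebra as in \eqref{eq:extendedHomotopy}, and run the homological perturbation lemma (Lemma~\ref{lemma:homologicalperturbationlemma}) with the perturbation given by the bracket part of the codifferential on $D_\Tay(C\times\liealg{g}^*)[[\hbar]]$. Since the source carries only the differential $[\star_G-J,\argument]$ and the Gerstenhaber bracket, every internal vertex of the resulting tree formula is the binary bracket $Q_2^1$, so the transferred inclusion collapses to the stated recursion. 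The output is a coalgebra morphism commuting with the two codifferentials, i.e.\ an $L_\infty$-morphism; as noted for $P$ in the proof of Proposition~\ref{prop:HTTDTayCartan}, this step does not require $h_\hbar^2=0$, only the side conditions of Proposition~\ref{prop:PartialHomDefNearlySpecial}. Because its linear part $I_1^1=i_\hbar$ is the inclusion of the retract \eqref{eq:defomedRetractDtay} and hence a quasi-isomorphism (as $\id-i_\hbar p_\hbar=[h_\hbar,[\star_G-J,\argument]]$), $I$ is an $L_\infty$-quasi-isomorphism; together with $P$ and the identity $P\circ I=\id$ established below, this realizes $I$ as a quasi-inverse of $P$.

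To prove $P\circ I=\id$ I would show that the composite $L_\infty$-morphism has identity linear part and vanishing higher Taylor coefficients. The linear part is $P_1^1\circ I_1^1=p_\hbar\circ i_\hbar=\id$ by Proposition~\ref{prop:PartialHomDefNearlySpecial}. For $n\geq 2$ I expand $(P\circ I)_n^1=\sum_{m\geq 1}P_m^1\circ (I)_n^m$ using \eqref{coalgebramorphism}, where $(I)_n^m$ is a sum of symmetric products $I_{k_1}^1\vee\cdots\vee I_{k_m}^1$ with $k_1+\cdots+k_m=n$. The term $m=1$ is $p_\hbar\circ I_n^1=p_\hbar\circ h_\hbar\circ(\cdots)=0$ by the side condition $p_\hbar h_\hbar=0$. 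In each term with $m\geq 2$ the map $P_m^1$ ends, on the right, with the extended homotopy $H_m$, which applies $h_\hbar$ to one of the factors $I_{k_j}^1$; whenever that factor is a leaf $I_1^1=i_\hbar$, the side condition $h_\hbar i_\hbar=0$ kills the term.

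The hard part will be the remaining terms, in which $H_m$ applies $h_\hbar$ to a factor $I_{k_j}^1$ with $k_j\geq 2$ that itself begins with $h_\hbar$, producing a square $h_\hbar^2$; since our retract is only \emph{nearly} special, $h_\hbar^2\neq 0$ (see the Remark following \eqref{eq:ClassicalRetractDTay}), so these terms are not annihilated for free. I would resolve this exactly as $p_\hbar i_\hbar=\id$ was resolved in Proposition~\ref{prop:PartialHomDefNearlySpecial}: there the identity survived not from $h^2=0$ but from $\widetilde{\Phi}^2=0$ together with $p\circ\widetilde{\Phi}=0$, because the perturbation only ever feeds the $\widetilde{\Phi}$-part of $h$ and $\widetilde{\Phi}$ introduces a differentiation in $\liealg{g}^*$-direction that the outermost $p_\hbar=p$ annihilates. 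Carrying this bookkeeping to the coalgebra level, I would use \eqref{eq:DeformedIandH}, which builds both $i_\hbar$ and $h_\hbar$ from $\widetilde{\Phi}$, together with the vanishing property established in the proof of Proposition~\ref{prop:HigherBracketsDtayvanish}, namely that $i_\hbar$ vanishes on $\liealg{g}^*$-coordinates and that $\widetilde{\Phi}\circ B$ preserves this property. Tracking the $\liealg{g}^*$-differentiation degree should show that every surviving $h_\hbar^2$-contribution either carries a $\liealg{g}^*$-differentiation removed by the final $p_\hbar=p$ or inserts a $\liealg{g}^*$-coordinate on which $i_\hbar$ vanishes, so all higher coefficients of $P\circ I$ vanish and $P\circ I=\id$. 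The technical heart is precisely this propagation of the nested relation $\widetilde{\Phi}^2=0$ through the perturbation series.
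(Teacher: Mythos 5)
Your central insight---that the failure of $h_\hbar^2=0$ is immaterial because only the $\widetilde{\Phi}$-part of $h_\hbar$ can occur in these compositions, and $\widetilde{\Phi}\circ\widetilde{\Phi}=0$---is exactly the observation on which the paper's proof rests, and your expansion argument for $P\circ I=\id$ (linear term $p_\hbar\circ i_\hbar=\id$; the $m=1$ terms killed by $p_\hbar\circ h_\hbar=0$; leaf terms killed by $h_\hbar\circ i_\hbar=0$; nested terms killed by the effective square-zero property) is sound and is essentially how the paper obtains this identity.

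The genuine gap is in your first step, the construction of $I$ and the proof that it is an $L_\infty$-morphism. The homological perturbation lemma (Lemma~\ref{lemma:homologicalperturbationlemma}) produces chain maps only; the assertion that the perturbed inclusion on the symmetric coalgebra ``is a coalgebra morphism commuting with the two codifferentials'' is precisely the nontrivial content of the coalgebra (tensor-trick) perturbation lemmas, and those are proved under the full contraction conditions \emph{including} $h^2=0$---the very hypothesis that fails for $h_\hbar$. Your justification that ``this step does not require $h_\hbar^2=0$, only the side conditions'' is correct for the transferred structure $Q_C$ and the projection $P$ (Theorem~\ref{thm:HTTJonas}, Proposition~\ref{prop:HTTDTayCartan}), but it does not transfer to $I$: even to identify whatever your perturbation argument produces with the stated recursion $I_{k+1}^1=h_\hbar\circ Q_2^1\circ I_{k+1}^2$ one needs $h_\hbar\circ I_\ell^1=0$ for all $\ell$, i.e.\ both $h_\hbar\circ i_\hbar=0$ and the effective square-zero property, so the $\widetilde{\Phi}$-bookkeeping you postpone to the last paragraph is already needed at this stage, not only for $P\circ I=\id$. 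The paper's proof instead invokes Proposition~\ref{prop:Infinityinclusion}, which you never use: there $I_1^1=i$, $I_{k+1}^1=h\circ L_{\infty,k+1}$ is shown to be an $L_\infty$-morphism by a direct induction that uses only the homotopy equation and $p\circ h=0$ (no $h^2=0$ at all), and the conditions $h^2=0=h\circ i$---here supplied by $\widetilde{\Phi}\circ\widetilde{\Phi}=0$ and $h_\hbar\circ i_\hbar=0$---enter only to collapse that general recursion to the stated one and to conclude $P\circ I=\id$. Replacing your perturbation-lemma construction by this inductive argument (or citing Proposition~\ref{prop:Infinityinclusion} together with your $\widetilde{\Phi}$ observation) closes the gap; the remainder of your plan then goes through.
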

\begin{proof} 
Note that we have in general  $h_\hbar^2 \neq 0$, but the only part 
of the homotopy that appears in the above recursions is $\widetilde{\Phi}$, 
where we know $\widetilde{\Phi}\circ\widetilde{\Phi} = 0$. 
Therefore, the statement follows from Proposition~\ref{prop:Infinityinclusion}.
\end{proof}

We get with Corollary~\ref{cor:IPsimId}:

\begin{corollary}
  The $L_\infty$-morphism $I$ is compatible with the filtration induced 
	by $\hbar$ and 
	\begin{equation}
	\hbar \widetilde{\pi}_\star 
	= (I\circ P)^1(\cc{\exp}(\hbar \pi_\star-\hbar H'))
	\in
	\left(D_\Tay(C\times\liealg{g}^*)[[\hbar]],
	[\star_G-J,\argument],[\argument,\argument]\right)
	\end{equation}
	is a well-defined 
	Maurer-Cartan element that is equivalent 
	to $\hbar\pi_\star- \hbar H'$. 
	In particular, $(\widetilde{\star}=\star_G + \hbar\widetilde{\pi}_\star,J)$ 
	is a strongly invariant star product, i.e. an equivariant star product s.t. 
	the quantum momentum map is just the classical momentum map, 
	and it is equivariantly equivalent 
	to $(\star,H)$.
\end{corollary}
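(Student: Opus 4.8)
The plan is to obtain the statement by applying the general Maurer--Cartan formalism of \eqref{eq:FMC} to the endomorphism $I\circ P$ and invoking Corollary~\ref{cor:IPsimId}, which provides both the filtration compatibility of $I$ and the fact that $I\circ P$ is $L_\infty$-homotopic to the identity of $(D_\Tay(C\times\liealg{g}^*)[[\hbar]],[\star_G-J,\argument],[\argument,\argument])$. Since $\hbar\pi_\star-\hbar H'$ lies in $\mathcal{F}^1 D_\Tay(C\times\liealg{g}^*)[[\hbar]]=\hbar D_\Tay(C\times\liealg{g}^*)[[\hbar]]$, the filtration compatibility guarantees that the series
\begin{equation*}
  \hbar\widetilde{\pi}_\star
  =
  (I\circ P)^1(\cc{\exp}(\hbar\pi_\star-\hbar H'))
  =
  \sum_{n\geq 1}\frac{1}{n!}(I\circ P)^1_n\big((\hbar\pi_\star-\hbar H')^{\vee n}\big)
\end{equation*}
converges $\hbar$-adically and produces a genuine Maurer--Cartan element. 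As $L_\infty$-homotopic morphisms act identically on gauge-equivalence classes of Maurer--Cartan elements and the identity fixes $\hbar\pi_\star-\hbar H'$, the element $\hbar\widetilde{\pi}_\star$ is gauge-equivalent to $\hbar\pi_\star-\hbar H'$; through the dictionary of Lemma~\ref{Lem: MCtoStar} and the notion of equivariant equivalence this is exactly an equivariant equivalence between the corresponding equivariant star products.

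The remaining point, and the one I would treat most carefully, is the claim of strong invariance, namely that $\hbar\widetilde{\pi}_\star$ has vanishing $\deg_{\Sym\liealg{g}^*}$-degree one component, since by Lemma~\ref{Lem: MCtoStar} this component is precisely the quantum momentum map correction. Here I would argue by tracking the $\Sym\liealg{g}^*$-degree. First, $P$ takes values in the Cartan model $(\prod_{i=0}^\infty(\Sym^i\liealg{g}\otimes\Dpoly(C)))^\group{G}[[\hbar]]$, which is concentrated in $\Sym\liealg{g}^*$-degree zero; hence $\alpha=P^1(\cc{\exp}(\hbar\pi_\star-\hbar H'))$ has $\Sym\liealg{g}^*$-degree zero, and it suffices to show that $I^1(\cc{\exp}(\alpha))$ stays in $\Sym\liealg{g}^*$-degree zero. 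The building blocks of the Taylor coefficients of $I$ from Proposition~\ref{prop:Infinityinclusion}, i.e.\ the inclusion $i_\hbar$, the Gerstenhaber bracket $Q_2^1$ and the perturbation $B=[\hbar m_G,\argument]$, all preserve the $\Sym\liealg{g}^*$-degree, because $i_\hbar$ lands in degree zero, the bracket wedges the symmetric factors, and $m_G$ only differentiates in the Taylor jet directions. The only degree-lowering ingredient is the $\widehat{h}$-part of $h_\hbar$, given by $P\otimes D\mapsto\pm\inss(e_i)P\otimes D\cup\frac{\del}{\del e_i}$, which annihilates $\Sym\liealg{g}^*$-degree zero; thus on degree-zero inputs $h_\hbar$ collapses to its $\widetilde{\Phi}$-part and again preserves degree zero. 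Consequently the degree-zero sector is stable under all the structure maps of $I$, so an induction on the recursion $I_k^1=h_\hbar\circ Q_2^1\circ I_{k+1}^2$ shows that $\hbar\widetilde{\pi}_\star$ has $\Sym\liealg{g}^*$-degree zero and carries no momentum-map correction.

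To finish I would combine the two parts: by Lemma~\ref{lem:GuttProduct} the background $\star_G$ already has $J=\pr_{\liealg{g}^*}$ as momentum map, and since $\hbar\widetilde{\pi}_\star$ adds no $\liealg{g}^*$-contribution, the pair $(\widetilde{\star}=\star_G+\hbar\widetilde{\pi}_\star,J)$ is an equivariant star product whose quantum momentum map equals the classical one, i.e.\ it is strongly invariant, and by the first paragraph it is equivariantly equivalent to $(\star,H)$. The main obstacle I anticipate is precisely the degree-tracking of the previous paragraph: one must verify that the full perturbed homotopy $h_\hbar=h\circ\sum_{k\geq 0}(-Bh)^k$, and not merely its $\widetilde{\Phi}$-part, respects the degree-zero sector, which hinges on the vanishing of $\widehat{h}$ there; a secondary technical point is to make the statement that $L_\infty$-homotopic morphisms coincide on Maurer--Cartan moduli precise in the present complete filtered setting, which is exactly what Corollary~\ref{cor:IPsimId} is meant to supply.
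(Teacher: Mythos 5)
Your proposal is correct and follows essentially the paper's own route: the well-definedness and the equivalence of $\hbar\widetilde{\pi}_\star$ with $\hbar\pi_\star-\hbar H'$ are obtained exactly as in the paper, namely by applying Corollary~\ref{cor:IPsimId} to the Maurer--Cartan element $\hbar\pi_\star-\hbar H'\in\mathcal{F}^1 D_\Tay(C\times\liealg{g}^*)[[\hbar]]$, with gauge equivalence in the equivariant DGLA translating into equivariant equivalence since the degree-zero part consists of invariant differential operators. Your degree-tracking argument for strong invariance spells out what the paper leaves implicit, and it is sound: it rests on the same observation the paper itself records in the proof of the proposition defining $I$ (the vanishing of $\widehat{h}$ on $\Sym\liealg{g}^*$-degree zero, so that only $\widetilde{\Phi}$ enters the recursion and the image of $I$ on the Cartan model carries no $\Sym^1\liealg{g}^*\otimes\Dpoly^{-1}(C)$-component).
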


The reduction of $(\widetilde{\star},J)$ via the 
reduction $L_\infty$-morphism $D_\red$ is now easy:

\begin{lemma}
  The reduction $L_\infty$-morphism
	\begin{equation}
		D_\red
		=
		\pr \circ P \colon 
		\left(D_\Tay(C\times\liealg{g}^*)[[\hbar]],[\star_G-J,\argument],
		[\argument,\argument]\right) 
		\longrightarrow
		(\Dpoly(M_\red)[[\hbar]],\del, [\argument,\argument]_G)
	\end{equation}
	from Theorem~\ref{thm:NotTwistedRedMorph} maps 
	$\hbar \widetilde{\pi}_\star$ to a Maurer-Cartan element 
	$\hbar m_\red = \pr \circ P^1(\exp \hbar \widetilde{\pi}_\star)$ in the 
	polydifferential operators on $M_\red$. The corresponding star product 
	$\widetilde{\star}_\red = \mu + \hbar m_\red$ is given by
  \begin{equation}
    \label{eq:DredStarRedExpl}
		\pr^*(u_1 \widetilde{\star}_\red u_2) 
  	=
		\iota^* (\prol(\pr^*u_1)\widetilde{\star} \prol(\pr^* u_2))
  \end{equation}
	for all $u_1,u_2\in\Cinfty(M_\red)[[\hbar]]$.
\end{lemma}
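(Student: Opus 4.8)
The plan is to first invoke the general Maurer--Cartan correspondence: since $D_\red = \pr\circ P$ is an $L_\infty$-morphism compatible with the $\hbar$-adic filtration and $\hbar\widetilde{\pi}_\star$ lies in $\mathcal{F}^1$, the element $\hbar m_\red = \pr\circ P^1(\exp \hbar\widetilde{\pi}_\star)$ is a well-defined Maurer--Cartan element, so that $\widetilde{\star}_\red = \mu + \hbar m_\red$ is indeed a star product on $M_\red$. It then only remains to identify $\hbar m_\red$ with the bidifferential operator determined by the right-hand side of \eqref{eq:DredStarRedExpl}. To organize the computation I would set $\rho = P^1(\exp\hbar\widetilde{\pi}_\star)$ in the Cartan model, so that $\hbar m_\red = \pr(\rho)$, and use the quasi-inverse $I$: by $P\circ I = \id$ from the preceding Proposition together with $\hbar\widetilde{\pi}_\star = (I\circ P)^1(\cc{\exp}(\hbar\pi_\star - \hbar H'))$ from the preceding Corollary, one obtains $\hbar\widetilde{\pi}_\star = I^1(\cc{\exp}\rho)$.

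With this in hand I would evaluate $\iota^*(\prol(\pr^*u_1)\,\widetilde{\star}\,\prol(\pr^*u_2))$ using $\widetilde{\star} = \star_G + \hbar\widetilde{\pi}_\star$ and treat the two summands separately. The prolonged functions $\prol(\pr^*u_j)$ are constant in $\liealg{g}^*$-direction and $\group{G}$-invariant, hence correspond to elements of symmetric degree $\Sym^0\liealg{g}$. On such elements the Gutt product reduces to the pointwise product, since in the notation of Proposition~\ref{prop:poductonUniversalEnvelopingAlg} one has $(f,1)\cdot(g,1) = (fg,1)$; together with $\iota^*\prol = \id$ this gives $\iota^*(\prol(\pr^*u_1)\star_G\prol(\pr^*u_2)) = \pr^*(u_1u_2)$, matching the $\mu$-part of $\widetilde{\star}_\red$.

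The heart of the argument is the vanishing of all higher corrections in the remaining summand. Writing $I^1(\cc{\exp}\rho) = i_\hbar(\rho) + \sum_{n\geq 2}\tfrac{1}{n!}I_n^1(\rho^{\vee n})$, I would observe that every term beyond $i(\rho)$ carries an outermost application of $\Phi$ or $\widehat{h}$: in $i_\hbar = \sum_{k\geq 0}(\widetilde{\Phi}\circ B)^k\circ i$ from \eqref{eq:DeformedIandH} the summands with $k\geq 1$ begin with $\widetilde{\Phi}$, while the higher coefficients $I_n^1$ for $n\geq 2$ begin with $h_\hbar = h\circ\sum_k(-Bh)^k$, whose outermost factor is $h = \tfrac{1}{\deg}(\widehat{h}-\Phi)$ from \eqref{eq:totalhomotopy}. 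By the explicit formulas \eqref{eq:DefPhiiPsi}, both $\Phi$ and $\widehat{h}$ insert a differentiation $\tfrac{\del}{\del e_t}$ acting on one of the arguments, so that $\Phi(A)$ and $\widehat{h}(A)$ vanish as soon as all of their arguments are constant in $\liealg{g}^*$-direction. Applied to the prolonged functions this annihilates every correction, leaving $\iota^*(\hbar\widetilde{\pi}_\star(\prol(\pr^*u_1),\prol(\pr^*u_2))) = \iota^*(i(\rho)(\prol(\pr^*u_1),\prol(\pr^*u_2)))$. Finally, $i(\rho)$ differentiates only in $C$-directions and carries polynomial coefficients in $\liealg{g}^*$ encoding its $\Sym\liealg{g}$-degree; evaluating on $\liealg{g}^*$-constant functions and applying $\iota^*$, i.e. setting $\liealg{g}^* = 0$, extracts exactly the $\Sym^0\liealg{g}$-component $\pr(\rho) = \hbar m_\red$, giving $\pr^*(\hbar m_\red(u_1,u_2))$. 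Adding the two contributions yields \eqref{eq:DredStarRedExpl}.

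I expect the main obstacle to be the bookkeeping in the vanishing step: one must verify that in the recursive formulas for $I_n^1$ and for $i_\hbar$ the operators $\Phi$ and $\widehat{h}$ genuinely act as the outermost operation before evaluation, so that no intervening Gerstenhaber bracket or factor of $B$ can absorb the $\liealg{g}^*$-differentiation before the arguments are inserted. This is precisely the same vanishing mechanism already exploited in Proposition~\ref{prop:HigherBracketsDtayvanish} and Proposition~\ref{prop:prProjectionfromTransferredStructure}, so I would reuse those computations; once this structural point is secured, the remaining identifications are the routine ones indicated above.
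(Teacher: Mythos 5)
Your proof is correct, and it rests on the same two vanishing mechanisms as the paper's, but it runs the argument in the mirror-image direction. The paper works on the side of $\hbar m_\red$: since $\hbar \widetilde{\pi}_\star$ is by construction in the image of $I$, the identities $h_\hbar \circ i_\hbar = 0$ and $\widetilde{\Phi}^2 = 0$ yield $h_\hbar(\hbar\widetilde{\pi}_\star) = \widetilde{\Phi}(\hbar\widetilde{\pi}_\star) = 0$ outright; because each higher Taylor coefficient $P^1_{k+1} = L_{\infty,k+1}\circ H_{k+1}$ applies the symmetric extension of $h_\hbar$ first, the pushforward collapses to its linear term, $\hbar m_\red = \pr\circ p(\hbar\widetilde{\pi}_\star)$, and \eqref{eq:DredStarRedExpl} then follows at once because $\hbar m_G$ and all terms of $\hbar\widetilde{\pi}_\star$ carrying $\liealg{g}^*$-derivatives vanish on prolonged functions. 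You instead work on the side of $\hbar\widetilde{\pi}_\star$: keeping $\rho = P^1(\exp\hbar\widetilde{\pi}_\star)$ abstract, you invert to $\hbar\widetilde{\pi}_\star = I^1(\cc{\exp}\rho)$ via $P\circ I = \id$ (together with compatibility of Maurer--Cartan pushforward with composition), and then kill every correction beyond $i(\rho)$ by evaluating on $\liealg{g}^*$-constant arguments. The paper's ordering buys brevity, since $h_\hbar(\hbar\widetilde{\pi}_\star)=0$ holds ``by definition'' and no expansion of $I^1(\cc{\exp}\rho)$ is ever needed; your ordering makes fully explicit the evaluation bookkeeping that the paper compresses into a single sentence. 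Finally, the obstacle you flag at the end is not one: in $i_\hbar = \sum_{k}(\widetilde{\Phi}\circ B)^k\circ i$ and $I^1_{n} = h_\hbar\circ Q^1_2\circ I^2_{n}$ the maps $\widetilde{\Phi}$ resp.\ $h$ are literally the last factors of the composition, so no Gerstenhaber bracket or factor of $B$ can intervene between them and the inserted arguments; your outermost-operator argument therefore closes without extra work, and it is precisely the paper's observation $h_\hbar(\hbar\widetilde{\pi}_\star)=0$ in evaluated form.
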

\begin{proof}
By definition of $\hbar \widetilde{\pi}_\star$ we know 
$h_\hbar \hbar \widetilde{\pi}_\star=	\widetilde{\Phi}(\hbar \widetilde{\pi}_\star)=0$, and thus
\begin{equation*}				 
  \hbar m_\red
	=
	\pr \circ P^1(\exp \hbar \widetilde{\pi}_\star)
	=
	\pr \circ p(\hbar \widetilde{\pi}_\star).
 \end{equation*}
Equation~\eqref{eq:DredStarRedExpl} follows since 
$\hbar m_G(\prol(\pr^*u_1), \prol(\pr^* u_2))=0$.
\end{proof}

Moreover, we know by Lemma~\ref{lemma:BoldiotaClassical} that the 
BRST reduction of $ \mu + \hbar m_G	+ \hbar \widetilde{\pi}_\star$ 
coincides with \eqref{eq:DredStarRedExpl}, and we have shown:

\begin{theorem}
\label{thm:CompRed}
  Let $(\star,H)$ be an equivariant star product on $M$. 
	Then the reduced star product induced by 
	$D_\red$ from Theorem~\ref{thm:NotTwistedRedMorph} and the 
	reduced star product 	via the formal Koszul complex 
	\eqref{eq:ReducedStarProduct} are equivalent. 
\end{theorem}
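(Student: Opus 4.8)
The plan is to reduce the comparison to the strongly invariant representative $(\widetilde{\star}, J)$ supplied by the preceding Corollary, for which both reduction procedures can be written out explicitly, and then to observe that on this representative the two reduced products literally coincide. The statement for a general $(\star, H)$ then follows because both reduction schemes respect equivariant equivalence.

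First I would use the preceding Corollary to replace the Maurer--Cartan element $\hbar\pi_\star - \hbar H'$ representing $(\star, H)$ by the gauge-equivalent, strongly invariant element $\hbar\widetilde{\pi}_\star = (I\circ P)^1(\cc{\exp}(\hbar\pi_\star - \hbar H'))$. Since an $L_\infty$-morphism descends to a map on gauge-equivalence classes of Maurer--Cartan elements, the $D_\red$-reduction of $(\star, H)$ is equivalent to the $D_\red$-reduction of $(\widetilde{\star}, J)$. The latter is exactly the content of the preceding Lemma: because $\hbar\widetilde{\pi}_\star$ lies in the image of the inclusion $I$, the homotopy acts on it only through $\widetilde{\Phi}$ and annihilates it, so the transfer formula for the $L_\infty$-projection $P$ from Proposition~\ref{prop:HTTDTayCartan} collapses, and the reduced product is the explicit bidifferential operator $\widetilde{\star}_\red$ determined by the prolongation--restriction formula \eqref{eq:DredStarRedExpl}.

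On the Koszul side, Lemma~\ref{lemma:BoldiotaClassical} identifies the BRST reduction of $\widetilde{\star} = \mu + \hbar m_G + \hbar\widetilde{\pi}_\star$ via the formal Koszul complex with the very same formula \eqref{eq:DredStarRedExpl}. Hence, for the strongly invariant representative, the $D_\red$-reduction and the Koszul reduction are not merely equivalent but equal to $\widetilde{\star}_\red$. To conclude for the original $(\star, H)$ I would invoke the invariance of the formal Koszul reduction \eqref{eq:ReducedStarProduct} under equivariant equivalence, recalled in Appendix~\ref{sec:BRSTReductionStarProducts}: transporting the intertwiner $S = \exp(\hbar T)$ of the equivariant equivalence through the maps $\prol$ and $\iota^*$ shows that the BRST reduction of $(\star, H)$ is equivalent to that of $(\widetilde{\star}, J)$. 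Chaining these identifications, both reduced products of $(\star, H)$ are equivalent to $\widetilde{\star}_\red$, and therefore to one another.

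The main obstacle, and essentially the only non-automatic step, is this last transport on the Koszul side. On the $D_\red$-side the invariance under equivalence comes for free from the $L_\infty$-formalism, whereas on the Koszul side one must check that the equivalence transformation $S$ descends through the reduction of the Koszul complex to a genuine equivalence on $M_\red$; concretely, one verifies that $S$ intertwines the prolonged products before restriction and is compatible with the momentum map, so that $\iota^* S \prol$ yields the required gauge transformation of the reduced products. This is where the explicit description of the BRST reduction in Appendix~\ref{sec:BRSTReductionStarProducts} is indispensable, even though the conclusion is standard once that description is in place.
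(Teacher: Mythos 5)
Your proposal is correct and follows essentially the same route as the paper: pass to the strongly invariant representative $(\widetilde{\star},J)$ via the Corollary, observe that both reductions literally coincide on it by the preceding Lemma and Lemma~\ref{lemma:BoldiotaClassical}, and conclude using the fact that both procedures send equivariantly equivalent star products to equivalent reduced products. The only difference is that the step you flag as the ``main obstacle''---invariance of the Koszul reduction under equivariant equivalence---is not re-verified in the paper but simply cited from the literature (Reichert's thesis, Lemma~4.3.1, as recalled in Appendix~\ref{sec:BRSTReductionStarProducts}).
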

\begin{proof}
We know that both reduction procedures map equivalent equivariant 
star products to equivalent reduced star products. Moreover, we saw above 
that both reduction procedures coincide on $(\widetilde{\star}=\star_G + \hbar\widetilde{\pi}_\star,J)$ which is equivariantly equivalent 
to $(\star,H)$.
\end{proof}

%
%
%

\appendix

\section{BRST Reduction of Equivariant Star Products}
\label{sec:BRSTReductionStarProducts}

We recall a slightly modified version of the reduction of 
equivariant star products as introduced in 
\cite{bordemann.herbig.waldmann:2000a,gutt.waldmann:2010a}, 
see also \cite{esposito.kraft.waldmann:2020a} for a discussion of this reduction scheme in the context of Hermitian star products. It relies 
on the quantized Koszul complex and the homological perturbation lemma.

\subsection{Homological Perturbation Lemma}
\label{sec:HomPerLemma}
At first we recall from \cite[Theorem~2.4]{crainic:2004a:pre} and 
\cite[Chapter~2.4]{reichert:2017b} a version of the homological perturbation lemma 
that is adapted to our setting.
Let 
\begin{equation*}
\begin{tikzcd}
  (C,\D_C) 
  \arrow[rr, shift left, "i"] 
  &&   
  (D,\D_D)  \arrow[ll, shift left, "p"]
	\arrow[loop right, distance=3em, start anchor={[yshift=1ex]east}, end anchor={[yshift=-1ex]east}]{}{h}
\end{tikzcd}
\end{equation*}
be a homotopy retract (also called homotopy equivalence data), i.e. let 
$(C,\D_C)$ and $(D,\D_D)$ be two chain complexes together with two quasi-isomorphisms
\begin{equation}
  i \colon 
  C 
  \longrightarrow 
  D 
  \quad\text{and}\quad 
  p \colon 
  D 
  \longrightarrow 
  C
\end{equation}
and a chain homotopy 
\begin{equation}
  h \colon 
  D 
  \longrightarrow 
  D 
  \quad \text{with} \quad
  \id_D - ip 
  = 
  \D_D h + h \D_D
\end{equation}
between $\id_D$ and $ip$.
Then we say that a graded map $B\colon D_\bullet \longrightarrow
D_{\bullet -1}$ with $(\D_D + B)^2=0$ is a \emph{perturbation} of the
homotopy retract. The perturbation is called \emph{small} if $\id_D + B h$ is
invertible, and the homological perturbation lemma states that in this
case the perturbed homotopy retract is a again a homotopy retract, see
\cite[Theorem~2.4]{crainic:2004a:pre} for a proof.

\begin{proposition}[Homological perturbation lemma]
  \label{lemma:homologicalperturbationlemma}
  Let
  \begin{equation*}
  \begin{tikzcd}
    (C,\D_C) 
    \arrow[rr, shift left, "i"] 
    &&   
    (D,\D_D)  \arrow[ll, shift left, "p"]
  	\arrow[loop right, distance=3em, start anchor={[yshift=1ex]east}, end anchor={[yshift=-1ex]east}]{}{h}
  \end{tikzcd}
\end{equation*}
  be a homotopy retract and let $B$ be small perturbation of $\D_D$, then the
  perturbed data
	\begin{equation}
  \begin{tikzcd}
  (C,\widehat{\D}_C)
  \arrow[rr, shift left, "I"] 
  &&   
  (D,\widehat{\D}_D) \arrow[ll, shift left, "P"]
	\arrow[loop right, distance=3em, start anchor={[yshift=1ex]east}, end anchor={[yshift=-1ex]east}]{}{H} 
  \end{tikzcd}
  \end{equation}
  with
  \begin{equation}
    \begin{alignedat}{5}
      \label{eq:deformedhomologicalmaps}
      A 
      &= 
      (\id_D + Bh)^{-1}B, \quad
      &&
      \widehat{\D}_D 
      &&= 
      \D_D + B, \quad \quad
      &&
      \widehat{\D}_C 
      &&= 
      \D_C + p Ai   ,    \\
      I
      &= 
      i -hAi ,
      &&\; 
      P 
      &&=  
      p-pAh  ,
      && 
      H 
      &&= 
      h-hAh  
    \end{alignedat}
  \end{equation}
  is again a homotopy retract.
\end{proposition}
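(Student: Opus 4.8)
The plan is to check directly that the perturbed data again satisfies the four defining relations of a homotopy retract: $\widehat{\D}_C^2 = 0$, the chain-map identities $\widehat{\D}_D I = I\widehat{\D}_C$ and $\widehat{\D}_C P = P\widehat{\D}_D$, the homotopy relation $\id_D - IP = \widehat{\D}_D H + H\widehat{\D}_D$, and the fact that $I, P$ are quasi-isomorphisms. (Recall that $\widehat{\D}_D^2 = 0$ is exactly the perturbation hypothesis.) Everything is driven by two resolvent identities for $A$. I would first observe that $A = (\id_D + Bh)^{-1}B = B(\id_D + hB)^{-1}$, since $(\id_D + Bh)\,B\,(\id_D + hB)^{-1} = B(\id_D + hB)(\id_D + hB)^{-1} = B$; multiplying out the two descriptions yields
\begin{equation*}
  A + BhA = B = A + AhB.
\end{equation*}
These are the only properties of $A$ that enter. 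I would then collect the remaining hypotheses: expanding $(\D_D + B)^2 = 0$ and using $\D_D^2 = 0$ gives $\D_D B + B\D_D = -B^2$, while the unperturbed retract supplies $\D_D i = i\D_C$, $p\D_D = \D_C p$, and $\id_D - ip = \D_D h + h\D_D$.

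The heart of the matter is the homotopy relation. I would expand both sides into elementary words in $\D_D, B, h, A, i, p$:
\begin{equation*}
  \id_D - IP = (\id_D - ip) + ipAh + hAip - hAipAh,
\end{equation*}
and
\begin{equation*}
  \widehat{\D}_D H + H\widehat{\D}_D = (\id_D - ip) + Bh + hB - \D_D hAh - hAh\D_D - BhAh - hAhB.
\end{equation*}
After cancelling $(\id_D - ip)$, the remaining task is to reconcile the two sides. I would substitute $ip = \id_D - \D_D h - h\D_D$ into the first expression and then move every $\D_D$ across $A$ by means of $\D_D A + A\D_D = -B^2 - \D_D BhA - BhA\D_D$, which is itself a direct consequence of the resolvent identity $A = B - BhA$ together with $\D_D B + B\D_D = -B^2$. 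The perturbation condition is precisely what cancels the terms quadratic in $B$, while the resolvent identities resum the chains $Bh\cdots B$ back into the closed-form operator $A$ appearing in $H$. This bookkeeping of cross terms is where all the work is, and it is the main obstacle; the one trap to avoid is that we are \emph{not} in a special deformation retract, so no term may be discarded using $h^2 = 0$, $hi = 0$ or $ph = 0$.

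The remaining assertions are comparatively short and use the same two ingredients. The chain-map identities follow by substituting $A = B - BhA = B - AhB$ and $\D_D B + B\D_D = -B^2$ into $\widehat{\D}_D I - I\widehat{\D}_C$ and $\widehat{\D}_C P - P\widehat{\D}_D$ and resumming; $\widehat{\D}_C^2 = 0$ follows from $\widehat{\D}_C = \D_C + pAi$ by a computation of the same type, reducing it to $p(\D_D A + A\D_D)i$ plus the quartic term $pAipAi$, which cancel via the resolvent identities. Finally, for the quasi-isomorphism claim the cleanest route is the filtration argument relevant to this paper: since the perturbation $B = [\hbar m_G, \argument]$ strictly raises the $\hbar$-order, it is small automatically, $A = \sum_{k \geq 0} (-1)^k (Bh)^k B$ converges, and on the associated graded with respect to the $\hbar$-filtration the maps $I, P, \widehat{\D}_D, \widehat{\D}_C$ reduce to $i, p, \D_D, \D_C$; comparing the resulting spectral sequences shows that $I$ and $P$ are quasi-isomorphisms. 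Indeed, in this filtered setting one may prove all of the identities above order by order in $\hbar$ by a telescoping argument, which is often the most transparent way to organise the computation.
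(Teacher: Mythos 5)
First, a structural remark: the paper itself contains no proof of this proposition; it is quoted from the literature, with the proof deferred to Crainic's \emph{On the perturbation lemma, and deformations} (the reference \texttt{crainic:2004a:pre}, Theorem~2.4). So your attempt is really being measured against that cited proof, and your plan is of the same nature: a direct verification of the defining identities. The algebraic half of your plan is sound. The resolvent identities $A + BhA = B = A + AhB$, the perturbation identity $\D_D B + B\D_D = -B^2$, and your expansions of $\id_D - IP$ and of $\widehat{\D}_D H + H\widehat{\D}_D$ are all correct, and the bookkeeping you defer does close up: substituting $ip = \id_D - \D_D h - h\D_D$ and using the resolvent identities reduces the homotopy relation to $h\left(\D_D A + A\D_D + A(ip)A\right)h = 0$, and the single identity $\D_D A + A\D_D + A(ip)A = 0$ is exactly what is also needed for $\widehat{\D}_C^2 = p\left(\D_D A + A\D_D + A(ip)A\right)i = 0$ and for the chain-map identities. (The quickest way to obtain it, and to shorten your whole computation, is to note the closed forms $Ah = \id_D - (\id_D + Bh)^{-1}$, $I = (\id_D + hB)^{-1}i$, $P = p(\id_D+Bh)^{-1}$ and $H = h(\id_D + Bh)^{-1}$, which resum your chains $Bh\cdots B$ once and for all.)

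The genuine gap is the quasi-isomorphism step. The proposition is stated for an \emph{arbitrary} homotopy retract and an \emph{arbitrary} small perturbation, where smallness means only that $\id_D + Bh$ is invertible; no filtration appears in the hypotheses. Your argument that $I$ and $P$ are quasi-isomorphisms invokes the $\hbar$-adic filtration and the specific perturbation $B = [\hbar m_G,\argument]$ from this paper's application, i.e. you have silently strengthened the hypotheses, and the spectral-sequence comparison you propose is simply unavailable in the abstract setting. This is not cosmetic: the purely algebraic identities established in the first part yield only $\id_D - IP = \widehat{\D}_D H + H\widehat{\D}_D$, hence $I_*P_* = \id$ on $\mathrm{H}(D,\widehat{\D}_D)$, which gives injectivity of $P_*$ and surjectivity of $I_*$ but not the converse halves. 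The missing implication is precisely the place where the hypothesis that $i$ and $p$ are quasi-isomorphisms must be used, and it cannot be formal, since the data of a homotopy retract contains no homotopy $pi \sim \id_C$ on the small side, before or after perturbation. So a separate argument (this is what the cited proof of Crainic supplies) is required, and as written your proof establishes the proposition only in the filtered situation. For the purposes of this paper the damage is limited, because every application here is in the $\hbar$-filtered setting; but as a proof of the statement as formulated, it does not go through.
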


\begin{remark}
\label{rem:DefretractsnotStable}
  In \cite{crainic:2004a:pre} it is shown that perturbations of special 
	deformation retracts are again special deformation retracts, which is in 
	general not true for deformation retracts, see 
	Section~\ref{sec:homotopytransfertheorem} for the different notions.
\end{remark}

We are interested in even simpler complexes of the following form:

\begin{equation}
  \label{eq:simplehedata}
  \begin{tikzcd}[row sep = large, column sep = large]
    0    	&
    D_0   \arrow[l]
    \arrow[d,shift left=-.75ex,swap,"p"] 
    \arrow[r,shift left=-.75ex,swap,"h_0"]      &		
    D_1   \arrow[l,shift left=-.75ex,swap,"\D_{D,1}"]
    \arrow[r,shift left=-.75ex,swap,"h_1"]  &    
    \cdots\arrow[l,shift left=-.75ex,swap,"\D_{D,2}"]  \\
    0     &		 
    C_0   \arrow[l] 
    \arrow[u,shift left=-.75ex,swap,"i"]  	&	
    0	    \arrow[l]	 	&          	
  \end{tikzcd}
\end{equation}
In this case, the perturbed homotopy retract corresponding to a small
perturbation $B$ according to \eqref{eq:deformedhomologicalmaps} is
given by
\begin{equation*}
  I 
  = 
  i, \quad \quad
  P 
  = 
  p - p(\id_D + B_1 h_0)^{-1}B_1h_0 , \quad \quad
  H 
  = 
  h-h(\id_D + Bh)^{-1}Bh
\end{equation*}
and, using the geometric power series, this can be simplified to
\begin{equation}
  \label{eq:simplifieddeformedhomologicalstuff}
  I 
  = 
  i, \quad \quad
  P 
  = 
  p(\id_D + B_1 h_0)^{-1}, \quad \quad
  H 
  = 
  h(\id_D + Bh)^{-1}.
\end{equation}	
Here we denote by $B_1\colon D_1 \longrightarrow D_0$ the degree one
component of $B$, analogously for $h$. 
By Remark~\ref{rem:DefretractsnotStable} we know that deformation retracts are 
in general not preserved under perturbations. However, in this case we see that, 
starting with a deformation retract, the additional condition $h_0 i=0$ suffices to guarantee
\begin{equation*}
  P I
	=
	p(\id_D + B_1 h_0)^{-1} i
	=
	pi
	=
	\id_{C_0}.
\end{equation*}

\subsection{Quantized Koszul Complex}
\label{sec:QuantizedKoszulComplex}
Let now $(M,\{\argument,\argument\})$ be a smooth Poisson manifold 
with a left action of the Lie group $\group{G}$.  Moreover, let 
$J\colon M \rightarrow \liealg{g}^*$ be a classical (equivariant) momentum map.
As usual, we assume that $0 \in \liealg{g}^*$ is a value and a regular value of 
$J$ and set $C= J^{-1}(\{0\})$. In addition, we require the action to be 
proper on $M$ (or at least around $C$) and free on $C$, which implies that 
$M_\red = C/ \group{G}$ is a smooth manifold. The reduction via the 
classical Koszul complex $\Anti^\bullet\liealg{g}\otimes \Cinfty(M)$ 
is one way to show that $M_\red$ is even a Poisson manifold, but we need the 
quantum version to show that we have an induced star product on $M_\red$.
The Koszul differential $\del$ is given by
\begin{equation}
  \del \colon 
  \Anti^q\liealg{g}\otimes \Cinfty(M) 
  \longrightarrow
  \Anti^{q-1}\liealg{g}\otimes \Cinfty(M) , \hspace{5pt} 
  a 
  \mapsto 
  \ins(J)a = J_{i} \insa(\basis{e}^i)a,
  \label{eq:koszuldifferential}
\end{equation}
where $\ins$ denotes the left insertion and $J = J_{i}\basis{e^i}$
the decomposition of $J$ with respect to a basis
$\basis{e}^1,\dots,\basis{e}^n$ of $\liealg{g}^*$. Then 
$\del^2 =0$ follows immediately with the commutativity of the
pointwise product in $\Cinfty(M)$. The differential $\del$ is also a
derivation with respect to associative and super-commutative product
on the Koszul complex, consisting of the $\wedge$-product on
$\Anti^\bullet \liealg{g}$ tensored with the pointwise product on the
functions.  Moreover, it is invariant with respect to the induced
$\liealg{g}$-representation
\begin{equation}
	 \label{eq:ActionofgonKoszulComplex}
	 \liealg{g}\ni \xi 
	 \mapsto 
	 \rho(\xi)
	 =
	 \ad(\xi) \otimes \id - \id \otimes \Lie_{\xi_M} \in 
	 \End(\Anti^\bullet \liealg{g}\otimes \Cinfty(M))
\end{equation}
as we have
\begin{align*}
  \del \rho(\basis{e}_a) ( x \otimes f )
	& =
	f^k_{aj} \basis{e}_k\wedge \ins(\basis{e}^j)\wedge\ins(\basis{e}^i) x \otimes J_{0,i}f 
	+ f^i_{aj}\ins(\basis{e}^j)x \otimes J_{0,i} f  \\
	& \quad
	+ \ins(\basis{e}^i)x \otimes J_{0,i} \{J_{0,a},f\}_0       \\
	& =
	\rho(\basis{e}_a)\del ( x \otimes f)
\end{align*}
for all $x \in \Anti^\bullet \liealg{g}$ and $f\in \Cinfty(M)$.

One can show that the Koszul complex is acyclic in positive degree
with homology $\Cinfty(C)$ in order zero, and that one has a
$\group{G}$-equivariant homotopy
\begin{equation}
  h\colon 
  \Anti^\bullet  \liealg{g} \otimes \Cinfty(M) 
  \longrightarrow 
  \Anti^{\bullet+1}  \liealg{g} \otimes \Cinfty(M),
\end{equation}
see \cite[Lemma~6]{bordemann.herbig.waldmann:2000a} and
\cite{gutt.waldmann:2010a}.  In other
words, this means that
\begin{equation*}
	\prol \colon 
	(\Cinfty(C),0) 
	\rightleftarrows 
	(\Anti^\bullet \liealg{g}\otimes \Cinfty(M),\del)  
	\colon \iota^*,h
\end{equation*}
is a HE data of the special type of \eqref{eq:simplehedata}, i.e. we
have the following diagram:
\begin{figure*}[h]
	\center
	\begin{tikzcd}[row sep = large, column sep = large]
			0    	&
			\Cinfty(M) \arrow[l]\arrow[d,shift left=-.75ex,swap,"\iota^*"] 
			\arrow[r,shift left=-.75ex,swap,"h_0"]      &		
			\Anti^1\liealg{g}\otimes\Cinfty(M) \arrow[l,shift left=-.75ex,swap,"\del_1"]
			\arrow[r,shift left=-.75ex,swap,"h_1"]  &    
			\cdots \arrow[l,shift left=-.75ex,swap,"\del_2"]  \\
			0     &		 
			\Cinfty(C)  \arrow[l] \arrow[u,shift left=-.75ex,swap,"\prol"]  	&	
			0 \arrow[l]	 	&          	
	\end{tikzcd}
\end{figure*}

For the reduction of equivariant star products, we need to deform it to  
the \emph{quantized Koszul complex}. The \emph{quantized Koszul differential} 
$\boldsymbol{\del} \colon \Anti^\bullet\liealg{g}\otimes\Cinfty(M_\nice)[[\hbar]] 
\longrightarrow \Anti^{\bullet-1}\liealg{g}\otimes\Cinfty(M_\nice)[[\hbar]]$ 
is 
defined by	    
\begin{equation}
  \label{eq:QuantizedKoszulDifferential}
  \boldsymbol{\del}^{(\kappa)}(x \otimes f)
  =  
	 \ins(\basis{e}^a) x \otimes H_a \star f  
  - \frac{\hbar}{2} 
  f^c_{ab} \basis{e}_c \wedge \ins(\basis{e}^a)\ins(\basis{e}^b)x \otimes f
	+ \hbar \kappa f^b_{ab}\ins(\basis{e}^a)
  \left(x\otimes f\right)
\end{equation}
for $\kappa \in \mathbb{C}[[\hbar]]$, 
$x \in \Anti^\bullet \liealg{g}[[\hbar]]$ and 
$f \in \Cinfty(M_\nice)[[\hbar]]$, where $\Delta = f^b_{ab}\basis{e}^a$ is the modular one-form of $\liealg{g}$. 

\begin{remark}
Note that in the literature \cite{bordemann.herbig.waldmann:2000a, 
	gutt.waldmann:2010a} a different convention is used:
  \begin{equation*}
    \boldsymbol{\del}'^{(\kappa)}(x \otimes f)
    =  \ins(\basis{e}^a) x \otimes f \star H_a 
    + \frac{\hbar}{2} 
    f^c_{ab} \basis{e}_c \wedge \ins(\basis{e}^a)\ins(\basis{e}^b)x \otimes f
		+ \hbar \kappa \ins(\Delta)
    \left(x\otimes f\right)
\end{equation*}
for $\kappa \in \mathbb{C}[[\hbar]]$. In particular, 
$\boldsymbol{\del}'^{(\kappa)}$ is left $\star$-linear. However, in order 
to simplify the comparison of the BRST reduction with the reduction via 
$\mathrm{D}_\red$ in Section~\ref{sec:ComparisonofRedProd}, we want 
the quantized Koszul differential to be right $\star$-linear, which leads 
to our convention in \eqref{eq:QuantizedKoszulDifferential}. 
\end{remark}

The reduction of the star product in our convention works analogously 
to \cite{bordemann.herbig.waldmann:2000a,gutt.waldmann:2010a} since 
$\boldsymbol{\del}^{(\kappa)}$ satisfies all 
the desired properties:

\begin{lemma}
  Let $(\star,H)$ be an equivariant star product and 
	$\kappa \in \mathbb{C}[[\hbar]]$. 
	\begin{lemmalist}
	  \item One has $\boldsymbol{\del}^{(0)} \circ \ins(\Delta) + 
	        \ins(\Delta) \circ \boldsymbol{\del}^{(0)} = 0$.
		\item $\boldsymbol{\del}^{(\kappa)}$ is right $\star$-linear.
		\item $\boldsymbol{\del}^{(\kappa)} = \del + O(\hbar)$.
		\item $\boldsymbol{\del}^{(\kappa)}$ is $\group{G}$-equivariant.
		\item One has $ \boldsymbol{\del}^{(\kappa)} \circ 
		      \boldsymbol{\del}^{(\kappa)}    =	0$.
	\end{lemmalist}
\end{lemma}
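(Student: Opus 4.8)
The plan is to exploit the decomposition $\boldsymbol{\del}^{(\kappa)} = \boldsymbol{\del}^{(0)} + \hbar\kappa\ins(\Delta)$, since the $\kappa$-dependent piece is exactly the insertion of the modular one-form $\Delta = f^b_{ab}\basis{e}^a$. Of the five assertions, in the order stated, (ii) and (iii) are the quickest. Right $\star$-linearity follows termwise from associativity of $\star$: in the first summand $\ins(\basis{e}^a)x\otimes H_a\star(f\star g) = \ins(\basis{e}^a)x\otimes(H_a\star f)\star g$, while the remaining two summands act as the identity on the function factor. The classical limit $\boldsymbol{\del}^{(\kappa)} = \del + O(\hbar)$ is immediate from $H_a = J_a + O(\hbar)$ and $\star = \mu + O(\hbar)$, which turn the first summand into $\ins(J)$ and render the other two summands explicitly of order $\hbar$.

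For (i) I would compute the graded commutator $[\boldsymbol{\del}^{(0)},\ins(\Delta)]$ termwise after writing $\boldsymbol{\del}^{(0)} = T_1 + T_2$ with $T_1(x\otimes f) = \ins(\basis{e}^a)x\otimes H_a\star f$ and $T_2(x\otimes f) = -\frac{\hbar}{2}f^c_{ab}\basis{e}_c\wedge\ins(\basis{e}^a)\ins(\basis{e}^b)x\otimes f$. Since $\ins(\Delta)$ touches only the $\Anti^\bullet\liealg{g}$-factor, its anticommutator with $T_1$ vanishes because insertions of one-forms anticommute. For $T_2$ one applies the graded Leibniz rule $\ins(\basis{e}^d)(\basis{e}_c\wedge y) = \delta^d_c\, y - \basis{e}_c\wedge\ins(\basis{e}^d)y$, after which the purely wedge-valued contributions cancel and one is left with $-\frac{\hbar}{2}f^c_{ab}\Delta_c\ins(\basis{e}^a)\ins(\basis{e}^b)$. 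The contraction $f^c_{ab}\Delta_c = \Delta([\basis{e}_a,\basis{e}_b])$ vanishes because the modular one-form is a Lie algebra character, and (i) follows.

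Property (iv) I would verify against the representation $\rho(\xi) = \ad(\xi)\otimes\id - \id\otimes\Lie_{\xi_M}$, exactly as in the classical computation already carried out for $\del$ in the excerpt, but now feeding in the quantum momentum map relations $\frac{1}{\hbar}[H(\xi),H_a]_\star = H([\xi,\basis{e}_a])$ and $\Lie_{\xi_M} = -\frac{1}{\hbar}[H(\xi),\argument]_\star$. Concretely, the $\ad$-action on the insertion index of $T_1$ is matched by the failure of $H_a\star\argument$ to commute with $-\Lie_{\xi_M}$, which is precisely governed by the equivariance of $H$; the operator $T_2$ is $\ad$-invariant since it is built from the structure constants alone via the Jacobi identity, and commutes with the function-only operator $-\Lie_{\xi_M}$; finally $\ins(\Delta)$ is $\ad$-invariant because $\Delta$ is $\ad^*$-invariant, again since it annihilates brackets.

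The main work, and the expected obstacle, is (v). Using (i) together with $\ins(\Delta)^2 = 0$ one reduces $(\boldsymbol{\del}^{(\kappa)})^2 = (\boldsymbol{\del}^{(0)})^2$, so it suffices to show $(T_1+T_2)^2 = 0$. Here $T_2^2 = 0$ is the Jacobi identity, as $T_2$ is, up to normalization, the Chevalley--Eilenberg boundary on $\Anti^\bullet\liealg{g}$. In $T_1^2$ the antisymmetry of $\ins(\basis{e}^a)\ins(\basis{e}^b)$ retains only the $\star$-commutator $\frac{1}{2}[H_a,H_b]_\star = \frac{\hbar}{2}f^c_{ab}H_c$, where I invoke the quantum momentum map relation, producing $\frac{\hbar}{2}f^c_{ab}\ins(\basis{e}^a)\ins(\basis{e}^b)x\otimes H_c\star f$. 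The cross term $T_1T_2 + T_2T_1$ is the delicate step: applying the graded Leibniz rule for $\ins(\basis{e}^d)$ against $\basis{e}_c\wedge$ makes the wedge-valued terms cancel, leaving exactly $-\frac{\hbar}{2}f^c_{ab}\ins(\basis{e}^a)\ins(\basis{e}^b)x\otimes H_c\star f$, the negative of $T_1^2$. The cancellation of these two order-$\hbar$ contributions is the crux, and it hinges on the momentum map identity converting the $\star$-bracket of the $H_a$ into the very structure constants that appear in $T_2$.
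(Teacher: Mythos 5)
Your proposal is correct: all five verifications go through, including the key cancellations in (i) and (v) via $f^c_{ab}\Delta_c = \Delta([\basis{e}_a,\basis{e}_b]) = 0$ and the quantum momentum map identity $[H_a,H_b]_\star = \hbar f^c_{ab}H_c$ converting $T_1^2$ into the negative of the cross term $T_1T_2+T_2T_1$. The paper itself gives no details, stating only that the proof is analogous to Lemma~3.4 of Gutt--Waldmann (which uses the left-$\star$-linear convention); your computation is exactly that standard direct verification adapted to the right-linear convention of the present paper, so it matches the intended argument and in fact supplies the details the paper omits.
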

\begin{proof}
  The proof is analogue to \cite[Lemma~3.4]{gutt.waldmann:2010a}.
\end{proof}

Assume that we have chosen a value $\kappa \in \mathbb{C}[[\hbar]]$ 
and write $\boldsymbol{\del}= \boldsymbol{\del}^{(\kappa)}$. Then by the 
homological perturbation lemma one gets a perturbed homotopy retract
\begin{equation*}
	\begin{tikzcd}[row sep = large, column sep = large]
	0    	&
	\Cinfty(M_\nice)[[\hbar]]\arrow[l]\arrow[d,shift left=-.75ex,
	swap,"\boldsymbol{\iota^*}"] 
	\arrow[r,shift left=-.75ex,swap,"\boldsymbol{h}_0"]      &		
	\Anti^1\liealg{g}\otimes\Cinfty(M_\nice)[[\hbar]] 
	\arrow[l,shift left=-.75ex,swap,"\boldsymbol{\del}_1"]
	\arrow[r,shift left=-.75ex,swap,"\boldsymbol{h}_1"]  &    
	\cdots \arrow[l,shift left=-.75ex,swap,"\boldsymbol{\del}_2"]  \\
	0     &		 
	\Cinfty(C)[[\hbar]]  \arrow[l] 
	\arrow[u,shift left=-.75ex,swap,"\boldsymbol{\prol}"]  	&	
	0, \arrow[l]	 	&          	
	\end{tikzcd}
\end{equation*}
where
\begin{equation}
  \boldsymbol{\prol} 
	= 
	\prol, \hspace{20pt} 
  \boldsymbol{\iota^*}
	= 
	\iota^*(\id + B_1 h_0)^{-1} ,\hspace{20pt} 
  \boldsymbol{h} 
	= 
	h(\id + B h)^{-1},
\end{equation}	
and where $\boldsymbol{\del}-\del =  B$, see 
\eqref{eq:simplifieddeformedhomologicalstuff}. One can show that the 
\emph{deformed restriction map} $\boldsymbol{\iota^*}$ is given by
\begin{equation}
  \boldsymbol{\iota^*} 
  = 
  \iota^* \circ S
  =
  \sum_{r=0} \hbar^r \boldsymbol{\iota^*}_r
  \colon
  \Cinfty(M_\nice)[[\hbar]]
  \longrightarrow 
  \Cinfty(C)[[\hbar]]
\end{equation}
with a $\group{G}$-equivariant formal series of differential operators 
$S= \id+ \sum_{r=1}^\infty \hbar^r S_r$ on  
$\Cinfty(M_\nice)$ and with $S_r$ vanishing on constants. Moreover, it is 
uniquely determined by the properties 
\begin{align}
  \boldsymbol{\iota^*}_0  
  =
  \iota^*,
  \quad
  \boldsymbol{\iota^*} \boldsymbol{\del}_1
  = 
  0
  \quad
  \text{and}
  \quad
  \boldsymbol{\iota^*} \prol
  = 
  \id_{\Cinfty(C)[[\hbar]]}.
\end{align}


The reduced star product $\star_\red$ on $M_\red = C/\group{G}$ is then given by
\begin{equation}
	\label{eq:ReducedStarProduct}
	\pr^*(u_1\star_\red u_2) 
	= 
	\boldsymbol{\iota^*} (\prol(\pr^*u_1)\star \prol(\pr^* u_2))
\end{equation}
for all $u_1,u_2\in\Cinfty(M_\red)[[\hbar]]$, compare 
\cite[Theorem~32]{bordemann.herbig.waldmann:2000a}. 
In \cite[Lemma~4.3.1]{reichert:2017b} it has been shown that 
equivariantly equivalent star products reduce to equivalent star products 
on $M_\red$.

For the comparison of the reduction procedures in 
Section~\ref{sec:ComparisonofRedProd} we need the following observation:

\begin{lemma}
\label{lemma:BoldiotaClassical}
  Let $(\star= \mu + \hbar \pi_\star + \hbar m_G,J)$ be an equivariant 
	star product on $C\times \liealg{g}^*$, and choose $\kappa =- 1$ for 
	the quantized Koszul differential. If one has 
	$\widetilde{\Phi} (\hbar \pi_\star) = 0 = 
	\Phi (\hbar \pi_\star)$, then it follows for all 
	$u_1,u_2\in\Cinfty(M_\red)[[\hbar]]$
  \begin{equation}
  	\pr^*(u_1\star_\red u_2) 
	  = 
	  \boldsymbol{\iota^*} (\prol(\pr^*u_1)\star \prol(\pr^* u_2))
		=
		\iota^* (\prol(\pr^*u_1)\star \prol(\pr^* u_2)).
\end{equation}
\end{lemma}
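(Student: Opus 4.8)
The first equality is nothing but the definition \eqref{eq:ReducedStarProduct} of the reduced star product via the quantized Koszul complex, so the real content is the second equality $\boldsymbol{\iota^*}(\Omega) = \iota^*(\Omega)$ for $\Omega = \prol(\pr^*u_1)\star\prol(\pr^*u_2)$. My plan is to expand $\boldsymbol{\iota^*}$ by the homological perturbation lemma. By \eqref{eq:simplifieddeformedhomologicalstuff} we have $\boldsymbol{\iota^*} = \iota^*(\id + B_1 h_0)^{-1}$, where $B = \boldsymbol{\del} - \del$ (with $\kappa=-1$) is the perturbation of the classical Koszul differential and $B_1\colon \Anti^1\liealg{g}\otimes\Cinfty(M)\to\Cinfty(M)$ is its degree-one component. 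Expanding the geometric series, it therefore suffices to show $\iota^*(B_1 h_0)^k\Omega = 0$ for every $k\geq 1$. As a first simplification, since $\hbar m_G$ vanishes on prolongations that are constant in $\liealg{g}^*$-direction (as used just before the lemma), one has $\Omega = \phi_1\phi_2 + \hbar\pi_\star(\phi_1,\phi_2)$ with $\phi_i = \prol(\pr^*u_i)$.

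Next I would compute $B_1$ explicitly. Since the quantum momentum map is $H=J$ and we chose $\kappa=-1$, evaluating \eqref{eq:QuantizedKoszulDifferential} on $e_b\otimes f$ (where the quadratic $\Anti^\bullet$-term drops out in degree one) yields
\begin{equation*}
  B_1(e_b\otimes f)
  =
  \hbar\pi_\star(J_b,f) + \hbar m_G(e_b,f) - \hbar f^{b'}_{bb'}f .
\end{equation*}
Splitting $\hbar m_G(e_b,\argument) = -\hbar\Lie_{(e_b)_C} + \hbar m_\liealg{g}(e_b,\argument)$ as in the proof of Proposition~\ref{prop:prProjectionfromTransferredStructure} separates $B_1$ into a $\pi_\star$-part, a vertical Lie-derivative part, and the $\liealg{g}^*$-Gutt part together with the modular term $-\hbar f^{b'}_{bb'}$. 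The crucial structural point is that the Koszul homotopy $h_0$ is built from $\del = \ins(J) = J_a\ins(\basis{e}^a)$ with $J_a = e_a$, so applied to a function it produces precisely contractions of the form $\sum_a \basis{e}_a\otimes(\del/\del e_a)(\cdots)$. Feeding this into the $\pi_\star$-part of $B_1$ then assembles the contributions into the Hochschild contraction $\Phi(\hbar\pi_\star)$ and, for the iterated terms coming from higher powers of $h_0$, into the degree-weighted contraction $\widetilde{\Phi}(\hbar\pi_\star)$ — both of which vanish by hypothesis.

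It then remains to dispose of the vertical Lie-derivative part together with the $\liealg{g}^*$-Gutt and modular terms. Here I would use that $\Omega$ and the functions produced by $h_0$ are $\group{G}$-invariant, so that after $\iota^*$ the vertical vector fields $\Lie_{(e_b)_C}$ drop out and the first-order term $\hbar m_\liealg{g}(e_b,\argument) = \tfrac{\hbar}{2}f^{c}_{ba}e_c\,\del/\del e_a + O(\hbar^2)$ is traded against the modular term via invariance, exactly as in the cancellation at the end of the proof of Proposition~\ref{prop:prProjectionfromTransferredStructure}; the value $\kappa=-1$ is chosen precisely so that this trade is exact. The main obstacle is the bookkeeping needed to make the vanishing propagate to every power $(B_1 h_0)^k$: one must verify that $B_1 h_0$ maps the relevant class of $\group{G}$-invariant functions back into itself, so that each slot entering $\pi_\star$ is again a pure $\liealg{g}^*$-derivative paired with an $e_a$. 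This is the same stability property already exploited for $\Phi\circ B$ in Proposition~\ref{prop:HigherBracketsDtayvanish}, and granting it an induction on $k$ completes the argument.
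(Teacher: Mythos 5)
Your setup is exactly the paper's route: the first equality is just the definition \eqref{eq:ReducedStarProduct}, one expands $\boldsymbol{\iota^*}=\iota^*(\id+B_1h_0)^{-1}$ as a geometric series, computes $B_1(e_b\otimes f)=\hbar\pi_\star(e_b,f)+\hbar m_G(e_b,f)+\hbar\kappa f^{b'}_{bb'}f$, splits $\hbar m_G(e_b,\argument)=-\hbar\Lie_{(e_b)_C}+\hbar m_\liealg{g}(e_b,\argument)$, and uses $\Phi(\hbar\pi_\star)=0=\widetilde{\Phi}(\hbar\pi_\star)$ to kill the $\pi_\star$-contributions, which assemble into exactly these contractions. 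The gap is in the cancellation mechanism for the remaining terms, which you state backwards in two places. (i) The vertical Lie-derivative part does \emph{not} ``drop out after $\iota^*$'': the function slots produced by $h_0$ are not individually $\group{G}$-invariant --- only the full tensor $\sum_i e_i\otimes\ins(e^i)P\otimes\phi$ is --- so $\Lie_{(e_i)_C}$ does not annihilate them. (This argument is valid in the proof of Proposition~\ref{prop:prProjectionfromTransferredStructure}, where the arguments really are elements of $\Cinfty(C)^\group{G}$, but not here.) What is true is that the component of this part surviving $\iota^*$ (the one with $\ins(e^i)P\in\Sym^0\liealg{g}$) equals, by invariance of the \emph{pair} $P\otimes\phi$, precisely the modular contraction $-\hbar f^i_{ij}\ins(e^j)P\otimes\phi$, and it is this term that the $\kappa$-term must cancel; $\kappa=-1$ is chosen exactly for that trade. (ii) The Gutt part $\hbar m_\liealg{g}(e_i,\argument)$ is \emph{not} traded against the modular term via invariance: it is killed by $\iota^*$ alone, for degree reasons, since its image lies in $\Sym^{>0}\liealg{g}[[\hbar]]$ --- visible already in your own first-order formula $\tfrac{\hbar}{2}f^{c}_{ba}e_c\vee\ins(e^a)(\argument)+O(\hbar^2)$, which always carries a factor $e_c\vee$ (structurally: the counit of $\Universal_\hbar(\liealg{g})$ is an algebra morphism), whereas the modular term is scalar in $\Sym\liealg{g}$. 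The two live in different symmetric degrees and cannot cancel each other, so the trade you propose cannot be carried out; your argument reaches the correct conclusion only because these two misstatements compensate.

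Your closing ``main obstacle'' is also a non-issue once the computation is organized as above. The identity $\iota^*\circ B_1h_0=0$ holds on \emph{all} $\group{G}$-invariant formal functions in $\prod_j(\Sym^j\liealg{g}\otimes\Cinfty(C))[[\hbar]]$, with no finer structural condition: every application of $h_0$ followed by $B_1$ reproduces, by construction of the Koszul homotopy, the pattern $\sum_i(\argument)(e_i,\ins(e^i)\argument)$ with its degree weight, so the hypothesis on $\pi_\star$ applies verbatim at each stage. Since $B_1h_0$ is $\group{G}$-equivariant it preserves invariance, and $\Omega$ is invariant; hence $\iota^*(B_1h_0)^k\Omega=0$ for every $k\geq 1$ follows at once, with no induction on the shape of the slots entering $\pi_\star$.
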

\begin{proof}
  We have for a polynomial function $ f = P \otimes \phi \in 
	\Sym^j \liealg{g} \otimes \Cinfty(C) \subset \Cinfty(C\times\liealg{g}^*)$
	\begin{align*}
	  (\boldsymbol{\del}-\del) h_0 (P\otimes \phi)
		& =
		\frac{1}{j} \left(\hbar (\pi_\star + m_G) (e_i, \ins(e^i)P\otimes \phi)
		+ \hbar \kappa f^b_{ib} \ins(e^i)P \otimes \phi \right) \\
		& =
		\frac{1}{j}\left(\Phi(\hbar \pi_\star + \hbar m_G) (P\otimes \phi)
		+ \hbar \kappa f^b_{ib} \ins(e^i)P \otimes \phi \right) \\
		& = 
		\frac{1}{j}\left( \hbar m_G (e_i, \ins(e^i) P\otimes \phi)
		+ \hbar \kappa f^b_{ib} \ins(e^i)P \otimes \phi \right)\\
		& =
		\frac{1}{j}\left( \hbar m_{\liealg{g}}(e_i, \ins(e^i)P) \otimes \phi 
		- \ins(e^i) P \otimes \hbar\Lie_{(e_i)_C} \phi  + 
		\hbar \kappa f^b_{ib} \ins(e^i)P \otimes \phi \right),
	\end{align*}
	where $\hbar m_{\liealg{g}}$ denotes the non-trivial part of the 
	Gutt product on $\liealg{g}^*$. We know 
	that $\image (\hbar m_\liealg{g} (e_i, \argument)) \in
	\Sym^{> 0}\liealg{g}[[\hbar]]$, hence it follows 
	\begin{equation}
	\tag{$*$}
	\label{eq:iotaofBh}
	  \iota^* \circ (\boldsymbol{\del}-\del) h_0 (P\otimes \phi)
		=
		\frac{1}{j} \iota^* \left(-\ins(e^i) P \otimes \hbar\Lie_{(e_i)_C} \phi
		+ \hbar \kappa f^b_{ib} \ins(e^i)P \otimes \phi \right).
	\end{equation} 
	On an invariant polynomial $P \otimes \phi \in 
	(\Sym^j \liealg{g} \otimes \Cinfty(C))^\group{G}$ we have
	\begin{equation*}
	  - \ins(e^i) P \otimes \hbar\Lie_{(e_i)_C} \phi 
		=
		-\hbar \ins(e^i) \ad(e_i) P \otimes \phi 
		=
		-\hbar f^i_{ij} \ins(e^j)P \otimes \phi,
	\end{equation*}
	hence \eqref{eq:iotaofBh} vanishes for $\kappa = -1$. 
	Thus we have in this case
	\begin{align*}
  	\pr^*(u_1\star_\red u_2) 
	  & = 
	  \boldsymbol{\iota^*} (\prol(\pr^*u_1)\star \prol(\pr^* u_2)) 
		= 
		\iota^*  (\prol(\pr^*u_1)\star \prol(\pr^* u_2))
  \end{align*}
	and the statement is shown.
\end{proof}

\section{Explicit Formulas for the Homotopy Transfer Theorem}
\label{sec:homotopytransfertheorem}

In is well-known that $L_\infty$-quasi-isomorphisms always admit
$L_\infty$-quasi-inverses. Moreover, it is well-known that given a
homotopy retract one can transfer $L_\infty$-structures, see 
e.g. \cite[Section~10.3]{loday.vallette:2012a}. Explicitly, a homotopy retract (also 
called homotopy equivalence data) consists of two cochain complexes $(A,\D_A)$ and 
$(B,\D_B)$ with chain maps $i,p$ and 
homotopy $h$ such that 
\begin{equation}
  \label{eq:homotopyretract}
  \begin{tikzcd} 
	(A,\D_A)
	\arrow[rr, shift left, "i"] 
  &&   
  \left(B, \D_B\right)
  \arrow[ll, shift left, "p"]
	\arrow[loop right, "h"] 
\end{tikzcd} 
\end{equation}
with $h \circ \D_B + \D_B \circ h = \id - i \circ p$, and such that $i$ and $p$ are
quasi-isomorphisms. Then the homotopy transfer theorem
states that if there exists
a flat $L_\infty$-structure on $B$, then one can transfer it to $A$ in
such a way that $i$ extends to an $L_\infty$-quasi-isomorphism. By 
the invertibility of $L_\infty$-quasi-isomorphisms there also exists an 
$L_\infty$-quasi-isomorphism into $A$ denoted by $P$, see
e.g. \cite[Proposition~10.3.9]{loday.vallette:2012a}.

In this section we state a version of this statement adapted to our applications. 
For simplicity, we assume that we have a deformation retract satisfying
\begin{equation*}
  p\circ i
  =
  \id_A.
\end{equation*}
By \cite[Remark~2.1]{huebschmann:2011a} we can assume that we have even a 
special deformation retract, also called \emph{contraction}, where
\begin{equation*}
  h^2 
  =
  0,
  \quad \quad 
  h \circ i
  =
  0
  \quad \text{ and } \quad 
  p\circ h 
  = 
  0.
\end{equation*} 
Assume now that $(B,Q_B)$ is an $L_\infty$-algebra with $(Q_{B})_1^1=-\D_B$. 
In the following we
give a more explicit description of the transferred $L_\infty$-structure $Q_A$ on $A$ and 
of the $L_\infty$-projection $P\colon (B,Q_B)\rightarrow (A,Q_A)$ inspired by the 
symmetric tensor 
trick \cite{berglund:2014a,huebschmann:2010a,huebschmann:2011a,manetti:2010a}.
The map $h$ extends to a homotopy $H_n
\colon \Sym^n (B[1]) \rightarrow \Sym^n(B[1])[-1]$ with respect to
$Q_{B,n}^n \colon \Sym^n (B[1]) \rightarrow \Sym^n(B[1])[1]$, see
e.g. \cite[p.~383]{loday.vallette:2012a} for the construction on the
tensor algebra, which adapted to our setting works as follows:
we define the operator 
	\begin{align*}
	K_n
	\colon 
	\Sym^n(B[1])
	\longrightarrow 
	\Sym^n(B[1])
	\end{align*}
by 
	\begin{align*}
	K_n(x_1\vee\cdots\vee x_n)
	=
	\frac{1}{n!} 
	\sum_{i=0}^{n-1}
	\sum_{\sigma\in S_n}
	\frac{\epsilon(\sigma)}{n-i}ipX_{\sigma(1)}\vee\cdots\vee ipX_{\sigma(i)}
	\vee X_{\sigma(i+1)}\vee X_{\sigma(n)}.
	\end{align*}
Note that here we sum over the whole symmetric group and 
not the shuffles, since in this case the formulas are easier. We extend $-h$ to a 
coderivation to  $\Sym(B[1])$, i.e.
	\begin{align*}
	\tilde{H}_n(x_1\vee\cdots\vee x_n):=
	-\sum_{\sigma\in \mathrm{Sh}(1,n-1)}
	\epsilon(\sigma) \;
	hx_{\sigma(1)}\vee x_{\sigma(2)}\vee\cdots\vee x_{\sigma(n)}
	\end{align*}
and define 
\begin{align}
 \label{eq:extendedHomotopy}
	H_n=K_n\circ \tilde{H}_n
	=
	\tilde{H}_n\circ K_n.
	\end{align}
Since $i$ and $p$ are chain maps, we have $K_n\circ Q_{B,n}^n=Q_{B,n}^n\circ K_n$,
where $Q_{B,n}^n$ is the extension of the differential $Q_{B,1}^1 = - \D_B$ to 
$\Sym^n(B[1])$ as a coderivation. Hence we have 
	\begin{align*}
	Q_{B,n}^n H_n + H_n Q_{B,n}^n
	=
	(n\cdot\id-ip)\circ K_n,
	\end{align*}
where $ip$ is extended as a coderivation to $\Sym(B[1])$. A combinatorial and 
not very enlightening computation shows that finally 
\begin{equation}
	\label{Eq: ExtHomotopy}
  	Q_{B,n}^n H_n + H_n Q_{B,n}^n
	=
	\id - (ip)^{\vee n}.
\end{equation}
Now assume that we have a codifferential $Q_A$ and a 
morphism of coalgebras $P$ with structure maps  
$P_\ell^1 \colon \Sym^\ell(B[1]) \rightarrow A[1]$ 
such that $P$ is an $L_\infty$-morphism up to order $k$, i.e.
\begin{equation*}
  \sum_{\ell=1}^m P^1_\ell \circ Q_{B,m}^\ell
  =
  \sum_{\ell=1}^m Q_{A,\ell}^1\circ P^\ell_{m}
\end{equation*}
for all $m \leq k$. Then we have the following statement, whose proof can be found in
\cite{esposito.kraft.schnitzer:2020a:pre}.
\begin{lemma}
\label{lemma:Linftyuptok}
  Let $P \colon \Sym(B[1]) \rightarrow \Sym (A[1])$ be an
  $L_\infty$-morphism up to order $k\geq 1$ . Then
  \begin{equation}
    \label{eq:Linftykplusone}
    L_{\infty,k+1}
    =
    \sum_{\ell = 2}^{k+1} Q_{A,\ell}^1 \circ P^\ell_{k+1} - \sum_{\ell =1}^{k} 
    P_\ell^1 \circ 	Q^\ell_{B,k+1}
  \end{equation}
  satisfies
  \begin{equation}
    \label{eq:linftycommuteswithq}
    L_{\infty,k+1} \circ Q_{B,k+1}^{k+1}
    =
    -Q_{A,1}^1 \circ L_{\infty,k+1}.
  \end{equation}
\end{lemma}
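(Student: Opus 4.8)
The plan is to read $L_{\infty,k+1}$ as the order-$(k+1)$ corestriction of the \emph{defect} measuring the failure of $P$ to intertwine the two codifferentials, and to exploit that this defect is itself a coderivation along $P$. Concretely, I would set
\begin{equation*}
  R := P \circ Q_B - Q_A \circ P \colon \Sym(B[1]) \longrightarrow \Sym(A[1]),
\end{equation*}
and first record that $R$ is a coderivation over the coalgebra morphism $P$, i.e. $\Delta_A \circ R = (R \otimes P + P \otimes R)\circ \Delta_B$. This follows by a direct diagram chase from the fact that $P$ is a morphism of coalgebras together with $\Delta_A Q_A = (Q_A\otimes\id+\id\otimes Q_A)\Delta_A$ and the analogous identity for $Q_B$; the only care needed is the Koszul signs in the cocommutative setting. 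Since $Q_A^2 = 0 = Q_B^2$, one immediately obtains
\begin{equation*}
  R \circ Q_B = -\, Q_A \circ R .
\end{equation*}

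Next I would corestrict this identity onto the cogenerators $A[1]$ and evaluate on $\Sym^{k+1}(B[1])$. Writing $R^j_n \colon \Sym^n(B[1]) \to \Sym^j(A[1])$ for the arity components of $R$, the hypothesis that $P$ is an $L_\infty$-morphism up to order $k$ is exactly the statement $R^1_m = 0$ for all $m \le k$. On the left-hand side this kills every intermediate term except the arity-preserving one, leaving $R^1_{k+1} \circ Q^{k+1}_{B,k+1}$. On the right-hand side the coderivation structure of $R$ over $P$ forces $R^j_{k+1} = 0$ for every $j \ge 2$: such a component is a sum of terms with exactly one $R^1$-factor and $j-1 \ge 1$ factors of $P^1$, and for the $R^1$-factor to survive it must absorb all $k+1$ inputs (by the vanishing just recalled), leaving no inputs for the remaining $P^1$-factors. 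Hence only the $j=1$ term survives on the right and we are left with
\begin{equation*}
  R^1_{k+1} \circ Q^{k+1}_{B,k+1} = -\, Q^1_{A,1} \circ R^1_{k+1}.
\end{equation*}

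Finally I would transfer this closedness from $R^1_{k+1}$ to $L_{\infty,k+1}$. Unwinding the definitions, the two differ only by the graded commutator term in $P^1_{k+1}$,
\begin{equation*}
  R^1_{k+1} = -\,L_{\infty,k+1} + \bigl(P^1_{k+1}\circ Q^{k+1}_{B,k+1} - Q^1_{A,1}\circ P^1_{k+1}\bigr).
\end{equation*}
Both $Q^1_{A,1}$ and $Q^{k+1}_{B,k+1}$ square to zero, since each is precisely the arity-preserving component of $Q_A^2 = 0$, respectively $Q_B^2 = 0$, on $\Sym^1$, respectively on $\Sym^{k+1}$. Therefore the bracketed term $T := P^1_{k+1}\circ Q^{k+1}_{B,k+1} - Q^1_{A,1}\circ P^1_{k+1}$ satisfies $Q^1_{A,1}\circ T + T \circ Q^{k+1}_{B,k+1} = 0$ by a one-line cancellation, and subtracting this from the displayed relation for $R^1_{k+1}$ yields exactly \eqref{eq:linftycommuteswithq}.

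I expect the main obstacle to be the second step: pinning down the precise Koszul signs that establish $R$ as a $P$-coderivation and, with those signs fixed, confirming that the arity components $R^j_{k+1}$ really have the claimed one-$R^1$-factor form. Once this coalgebraic bookkeeping is in place, the vanishing argument and the final sign cancellation are routine.
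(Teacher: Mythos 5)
Your proposal is correct, but it takes a genuinely different route from the one the paper relies on. In fact the paper never proves Lemma~\ref{lemma:Linftyuptok} itself: it defers to \cite{esposito.kraft.schnitzer:2020a:pre}, where (as in the closely analogous manipulations reproduced in this paper's proof of Theorem~\ref{thm:HTTJonas}) the identity is obtained by directly expanding $L_{\infty,k+1}\circ Q_{B,k+1}^{k+1}$ into components, reshuffling terms via the coalgebra-morphism structure of $P$ and the coderivation structure of $Q_A,Q_B$, and invoking $Q_A^2=0$, $Q_B^2=0$ and the order-$k$ hypothesis term by term. You instead package the failure of $P$ to intertwine the codifferentials into the defect $R=P\circ Q_B-Q_A\circ P$, record that it is a coderivation along $P$ with $R\circ Q_B=-Q_A\circ R$, and then let the structure theory of $P$-coderivations do the bookkeeping: the hypothesis is exactly $R^1_m=0$ for $m\le k$, this forces $R^j_{k+1}=0$ for $j\ge 2$ (one $R^1$-factor plus at least one $P^1$-factor cannot be fed with only $k+1$ inputs), so corestricting the anticommutation relation to cogenerators in arity $k+1$ leaves $R^1_{k+1}\circ Q^{k+1}_{B,k+1}=-Q^1_{A,1}\circ R^1_{k+1}$; finally $R^1_{k+1}=-L_{\infty,k+1}+T$ with $T=P^1_{k+1}\circ Q^{k+1}_{B,k+1}-Q^1_{A,1}\circ P^1_{k+1}$, and $T$ anticommutes for free. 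This buys conceptual clarity — the lemma becomes the statement that a defect coderivation is automatically closed in the first arity where it can be nonvanishing — at the price of quoting the component structure of $P$-coderivations, which the computational proof avoids by working with explicit sums throughout.

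Two points you should make explicit. First, your sign worry is vacuous: $P$ has degree $0$, so the diagram chase establishing that $R$ is a $P$-coderivation produces no nontrivial Koszul signs. Second, you use flatness of both $L_\infty$-structures: coderivation components cannot raise symmetric arity only in the flat case, and this is what makes the arity-preserving part of $Q_B^2$ on $\Sym^{k+1}(B[1])$ equal to $\bigl((Q_B)^{k+1}_{k+1}\bigr)^2$ and similarly $\bigl((Q_A)^1_1\bigr)^2=(Q_A^2)^1_1$. This is harmless here, since the whole section works with flat structures, but it belongs in the proof. Note also that the paper reuses this computation inside Theorem~\ref{thm:HTTJonas} when $Q_A$ is only known to be a codifferential up to order $k+1$; your argument survives that weakening, since $R\circ Q_B+Q_A\circ R=-Q_A^2\circ P$ and after corestricting to arity $k+1$ only the components $(Q_A^2)^1_j$ with $j\le k+1$ enter.
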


This allows us to prove one version of the homotopy transfer theorem.

\begin{theorem}[Homotopy transfer theorem]
  \label{thm:HTTJonas}
  Let $(B,Q_B)$ be a flat $L_\infty$-algebra with differential $(Q_B)^1_1=-\D_B$ and contraction 
	\begin{equation}
    \begin{tikzcd} 
	  (A,\D_A)
	  \arrow[rr, shift left, "i"] 
    &&   
    \left(B, \D_B\right)
    \arrow[ll, shift left, "p"]
	  \arrow[loop right, "h"] .
  \end{tikzcd} 
  \end{equation}
	Then 
	\begin{align}
	  \begin{split}
		\label{eq:HTTTransferredStructures}
		  (Q_A)_1^1
			& =
			- \D_A,
			\quad \quad\quad\quad\;
			(Q_A)^1_{k+1}
			=
			\sum_{i=1}^{k} P^1_i \circ (Q_B)^i_{k+1} \circ i^{\vee (k+1)},\\
			P_1^1
			& =
			p, 
			\quad \quad\quad\quad\quad\quad\;\;
			P^1_{k+1}
			=
			L_{\infty,k+1}\circ H_{k+1}
			\quad \text{ for  } k\geq 1
		\end{split}
	\end{align}
	turns $(A,Q_A)$ into an $L_\infty$-algebra with 
	$L_\infty$-quasi-isomorphism $P\colon (B,Q_B)\rightarrow (A,Q_A)$. 
	Moreover, one has $P^1_k \circ i^{\vee k} =0$ for $k\neq 1$.
\end{theorem}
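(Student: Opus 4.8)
The plan is to build the codifferential $Q_A$ and the coalgebra morphism $P$ simultaneously by induction on the arity, arranging at each stage that the intertwining relation $P Q_B = Q_A P$ holds up to that order and that the auxiliary vanishing $P_k^1 \circ i^{\vee k} = 0$ holds for $k \neq 1$; the codifferential property $Q_A^2 = 0$ and the quasi-isomorphism statement will then be extracted at the end. For the base case $k=1$, the map $P_1^1 = p$ satisfies $p \circ Q_{B,1}^1 = -p\,\D_B = -\D_A\,p = Q_{A,1}^1 \circ p$ since $p$ is a chain map, so $P$ is an $L_\infty$-morphism up to order $1$, and $P_1^1 \circ i = p \circ i = \id$ by the contraction property.

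For the inductive step I assume that $P$ is an $L_\infty$-morphism up to order $k$ and that $P_j^1 \circ i^{\vee j} = 0$ for $2 \leq j \leq k$. I first define $(Q_A)_{k+1}^1$ by \eqref{eq:HTTTransferredStructures}; since it only involves $P_i^1$ with $i \leq k$, the map $L_{\infty,k+1}$ of \eqref{eq:Linftykplusone} is then well defined, and I set $P_{k+1}^1 = L_{\infty,k+1}\circ H_{k+1}$. Isolating in the order-$(k+1)$ morphism identity the two summands containing $P_{k+1}^1$ reduces the whole identity to $P_{k+1}^1 \circ Q_{B,k+1}^{k+1} - Q_{A,1}^1 \circ P_{k+1}^1 = L_{\infty,k+1}$. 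Substituting the definition of $P_{k+1}^1$ and using Lemma~\ref{lemma:Linftyuptok} to rewrite $Q_{A,1}^1 \circ L_{\infty,k+1}$ as $-L_{\infty,k+1}\circ Q_{B,k+1}^{k+1}$, the left-hand side collapses to $L_{\infty,k+1}\circ\bigl(H_{k+1}\,Q_{B,k+1}^{k+1} + Q_{B,k+1}^{k+1}\,H_{k+1}\bigr)$, which by the extended homotopy identity \eqref{Eq: ExtHomotopy} equals $L_{\infty,k+1}\circ\bigl(\id - (ip)^{\vee(k+1)}\bigr)$.

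Thus the inductive step comes down to proving $L_{\infty,k+1}\circ (ip)^{\vee(k+1)} = 0$, and since $(ip)^{\vee(k+1)} = i^{\vee(k+1)}\circ p^{\vee(k+1)}$ it suffices to show $L_{\infty,k+1}\circ i^{\vee(k+1)} = 0$. Here the key point is that $P_{k+1}^\ell \circ i^{\vee(k+1)}$ vanishes for $2 \leq \ell \leq k$, because any partition of $k+1$ arguments into fewer than $k+1$ blocks has a block of size $\geq 2$, on which a factor $P_s^1 \circ i^{\vee s}$ with $2 \leq s \leq k$ appears and vanishes by the inductive hypothesis; only the all-singleton term $\ell = k+1$ survives, contributing $Q_{A,k+1}^1 \circ (pi)^{\vee(k+1)} = (Q_A)_{k+1}^1$, which cancels exactly against $\sum_{\ell=1}^{k} P_\ell^1 \circ Q_{B,k+1}^\ell \circ i^{\vee(k+1)}$ by the very definition of $(Q_A)_{k+1}^1$. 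The same hypothesis, together with $h \circ i = 0$ and hence $\widetilde{H}_{k+1}\circ i^{\vee(k+1)} = 0$, yields $P_{k+1}^1 \circ i^{\vee(k+1)} = 0$, which closes the induction on the auxiliary claim.

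It remains to harvest the two global statements. From $P Q_B = Q_A P$ and $Q_B^2 = 0$ one gets $Q_A^2 P = Q_A(P Q_B) = (Q_A P) Q_B = P Q_B^2 = 0$; taking the corestriction to $A[2]$ of its $\Sym^n$-component and precomposing with $i^{\vee n}$ gives $\sum_\ell (Q_A^2)_\ell^1 \circ P_n^\ell \circ i^{\vee n} = (Q_A^2)_n^1 = 0$ by the same ``only all-singleton partitions survive'' argument, so $Q_A^2 = 0$ and $Q_A$ is a genuine $L_\infty$-structure. Finally $P_1^1 = p$ is a quasi-isomorphism by assumption, so $P$ is an $L_\infty$-quasi-isomorphism. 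I expect the main obstacle to be the central computation of the previous two paragraphs: correctly bookkeeping the interplay between Lemma~\ref{lemma:Linftyuptok} and the homotopy identity \eqref{Eq: ExtHomotopy}, and verifying that $L_{\infty,k+1}\circ i^{\vee(k+1)}$ telescopes to zero against the chosen normalization of $(Q_A)_{k+1}^1$.
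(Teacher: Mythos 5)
Your overall architecture is the same as the paper's — the inductive construction, the reduction of the order-$(k+1)$ morphism identity to $P^1_{k+1}\circ Q^{k+1}_{B,k+1}-Q^1_{A,1}\circ P^1_{k+1}=L_{\infty,k+1}$, the homotopy identity \eqref{Eq: ExtHomotopy}, and the ``only all-singleton partitions survive'' cancellation — but there is a genuine circularity in how you order the steps. In your inductive step you invoke Lemma~\ref{lemma:Linftyuptok} to rewrite $Q^1_{A,1}\circ L_{\infty,k+1}=-L_{\infty,k+1}\circ Q^{k+1}_{B,k+1}$. That lemma is stated (and proved) under the hypothesis that $Q_A$ is a codifferential, and this hypothesis is not decorative: unwinding the computation, given that $P$ is an $L_\infty$-morphism up to order $k$ and $Q_B^2=0$, the identity $L_{\infty,k+1}\circ Q^{k+1}_{B,k+1}=-Q^1_{A,1}\circ L_{\infty,k+1}$ is equivalent to $\sum_{\ell=2}^{k+1}(Q_AQ_A)^1_\ell\circ P^\ell_{k+1}=0$, so it needs exactly the vanishing of the components $(Q_AQ_A)^1_\ell$ for $\ell\leq k+1$. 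Your induction hypothesis contains no codifferential property for $Q_A$ at all — you explicitly defer $Q_A^2=0$ to the end, where you deduce it from the full relation $PQ_B=Q_AP$. But that full relation is precisely the output of the induction whose step you are trying to justify, so as written the argument is circular.

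The repair is exactly what the paper does, and you already have every ingredient for it: strengthen the induction hypothesis to ``$Q_A$ is a codifferential up to order $k$ \emph{and} $P$ is an $L_\infty$-morphism up to order $k$,'' and inside the inductive step first prove $(Q_AQ_A)^1_{k+1}=0$ — by the same move you use in your last paragraph, namely writing $(Q_AQ_A)^1_{k+1}=(Q_AQ_A)^1_{k+1}\circ P^{k+1}_{k+1}\circ i^{\vee(k+1)}$ and pushing $Q_AQ_A$ through $P$ using the order-$k$ intertwining together with the vanishing $P^1_s\circ i^{\vee s}=0$ for $2\leq s\leq k$ — and only then run the computation of Lemma~\ref{lemma:Linftyuptok}, observing (as the paper notes) that its proof only uses the codifferential property up to order $k+1$. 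With that interleaving, your remaining steps (the collapse to $L_{\infty,k+1}\circ(\id-(ip)^{\vee(k+1)})$, the cancellation of $L_{\infty,k+1}\circ i^{\vee(k+1)}$ against the definition of $(Q_A)^1_{k+1}$, and $H_{k+1}\circ i^{\vee(k+1)}=0$ from $h\circ i=0$) are correct and coincide with the paper's proof.
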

\begin{proof}
 We observe $P_{k+1}^1(ix_1 \vee \cdots \vee i x_{k+1}) = 0$ for all
 $k\geq 1$ and $x_i \in A$, which directly follows from $h\circ i =
 0$ and thus $H_{k+1} \circ i^{\vee (k+1)} = 0$. Suppose that 
$Q_A$ is a codifferential up to order $k\geq 1$, i.e. 
$\sum_{\ell=1}^m (Q_A)^1_\ell(Q_A)^\ell_m=0$ for all $m \leq k$, 
and that $P$ is an $L_\infty$-morphism 
up to order $k\geq 1$. We know that these conditions are satisfied for $k=1$ and we show that they hold for $k+1$. Starting with $Q_A$ we compute
\begin{align*}
  (Q_AQ_A)^1_{k+1}
	& =
	(Q_AQ_A)^1_{k+1} \circ P^{k+1}_{k+1}\circ i^{\vee (k+1)} 
	=
	\sum_{\ell=1}^{k+1} (Q_AQ_A)^1_\ell P^\ell_{k+1} i^{\vee (k+1)} 
	=
	(Q_AQ_AP)^1_{k+1}i^{\vee (k+1)} \\
	& =
	\sum_{\ell=2}^{k+1} (Q_A)^1_\ell (Q_AP)^\ell_{k+1} i^{\vee (k+1)} 
	+(Q_A)^1_1(Q_AP)^1_{k+1} i^{\vee (k+1)}  \\
	& =
	\sum_{\ell=2}^{k+1} (Q_A)^1_\ell (PQ_B)^\ell_{k+1} i^{\vee (k+1)} 
	+(Q_A)^1_1(Q_A)^1_{k+1}  \\
	& =
	(Q_APQ_B)^1_{k+1}i^{\vee (k+1)}  -(Q_A)^1_1(Q_A)^1_{k+1} + (Q_A)^1_1(Q_A)^1_{k+1} \\
	& =
	\sum_{\ell=1}^{k} (Q_AP)^1_\ell (Q_B)^\ell_{k+1}i^{\vee (k+1)} 
	+ (Q_AP)^1_{k+1} (Q_B)^{k+1}_{k+1}i^{\vee (k+1)} \\
	& = 
	\sum_{\ell=1}^{k} (PQ_B)^1_\ell (Q_B)^\ell_{k+1}i^{\vee (k+1)} 
	+ (Q_AP)^1_{k+1}i^{\vee (k+1)}  (Q_A)^{k+1}_{k+1}\\
	& =
	- (PQ_B)^1_{k+1}i^{\vee (k+1)}  (Q_A)^{k+1}_{k+1} + 
	(Q_AP)^1_{k+1}i^{\vee (k+1)}  (Q_A)^{k+1}_{k+1} \\
	& =
	- (Q_A)^1_{k+1}(Q_A)^{k+1}_{k+1} + 
	(Q_A)^1_{k+1}(Q_A)^{k+1}_{k+1}
	=
	0.
\end{align*}
By the same computation as in Lemma~\ref{lemma:Linftyuptok}, where one in fact only needs 
that $Q_A$ is a codifferential up to order $k+1$, it follows that 
\begin{equation*}
  L_{\infty,k+1} \circ Q_{B,k+1}^{k+1}
  =
  -Q_{A,1}^1 \circ L_{\infty,k+1}.
\end{equation*}
It remains to show that $P$ is an $L_\infty$-morphism
 up to order $k + 1$. We have 
  \begin{align*}
    P_{k+1}^1 \circ (Q_B)^{k+1}_{k+1}
    & = 
    L_{\infty,k+1} \circ H_{k+1} \circ (Q_B)_{k+1}^{k+1} \\
    &=
    L_{\infty,k+1} - L_{\infty,k+1} \circ (Q_B)_{k+1}^{k+1}\circ H_{k+1} 
    -L_{\infty,k+1} \circ (i\circ p)^{\vee (k+1)} \\
    & =
    L_{\infty,k+1} + (Q_A)_1^1 \circ P_{k+1}^1 
  \end{align*}
	since
	\begin{align*}
	  L_{\infty,k+1} \circ (i\circ p)^{\vee (k+1)}
		& =
		\left(\sum_{\ell = 2}^{k+1} Q_{A,\ell}^1 \circ P^\ell_{k+1} - \sum_{\ell =1}^{k} 
    P_\ell^1 \circ 	Q^\ell_{B,k+1}\right) \circ (i\circ p)^{\vee (k+1)}  \\
		& =
		(Q_A)^1_{k+1} \circ p^{\vee (k+1)} 
		- (Q_A)^1_{k+1}\circ p^{\vee(k+1)}
		=
		0.
	\end{align*}
  Therefore
  \begin{equation*}
    P_{k+1}^1 \circ (Q_B)^{k+1}_{k+1} -  (Q_A)_1^1 \circ P_{k+1}^1
    =
    L_{\infty,k+1},
  \end{equation*}
  i.e. $P$ is an $L_\infty$-morphism up to order $k+1$, and the
  statement follows inductively.
\end{proof}
Note that a special case of the above theorem, for $i$ being a DGLA morphism,  
has been proven in \cite[Proposition~3.2]{esposito.kraft.schnitzer:2020a:pre}. 
We also want to give an explicit formula for a $L_\infty$-quasi-inverse of 
$P$, generalizing \cite[Proposition~3.3]{esposito.kraft.schnitzer:2020a:pre}.

\begin{proposition}
  \label{prop:Infinityinclusion}
  The coalgebra map $I\colon \Sym^\bullet (A[1])\to \Sym^\bullet
  (B[1])$ recursively defined by the maps $I_1^1=i$ and $I_{k+1}^1=h\circ
  L_{\infty,k+1}$ for $k\geq 1$ is an $L_\infty$-quasi inverse of $P$.
  Since $h^2=0= h\circ i$, one even has $I_{k+1}^1 = h \circ \sum_{\ell = 2}^{k+1} Q_{B,\ell}^1 \circ I^\ell_{k+1}$ and $P \circ I = \id_A$.
\end{proposition}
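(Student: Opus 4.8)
The plan is to run a single induction on the arity that simultaneously yields the simplified recursion, the identity $P\circ I=\id$, and the $L_\infty$-morphism property of $I$, in that order. The preliminary observation that drives everything is $h\circ I_\ell^1=0$ for all $\ell\geq1$: for $\ell=1$ this is $h\circ i=0$, while for $\ell\geq2$ the map $I_\ell^1$ factors through $h$, so $h\circ I_\ell^1=h^2\circ(\cdots)=0$. Applying $h$ to the defining relation $I_{k+1}^1=h\circ L_{\infty,k+1}$, where $L_{\infty,k+1}=\sum_{\ell=2}^{k+1}Q_{B,\ell}^1\circ I_{k+1}^\ell-\sum_{\ell=1}^{k}I_\ell^1\circ Q_{A,k+1}^\ell$ is the obstruction to $I$ being a morphism, this observation kills the second sum and gives at once the simplified recursion $I_{k+1}^1=h\circ\sum_{\ell=2}^{k+1}Q_{B,\ell}^1\circ I_{k+1}^\ell$.

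Next I would prove $P\circ I=\id$ directly, that is, before invoking the morphism property of $I$, so as to avoid circularity. Since $(P\circ I)_1^1=p\circ i=\id$, it remains to show $(P\circ I)_n^1=\sum_{\ell=1}^{n}P_\ell^1\circ I_n^\ell=0$ for $n\geq2$. The term $\ell=1$ is $p\circ I_n^1=p\circ h\circ L_{\infty,n}=0$, using $p\circ h=0$. For $2\leq\ell\leq n$ the component $I_n^\ell$ lands in symmetric products of images of the maps $I_j^1$, each annihilated by $h$; since $P_\ell^1$ factors through the extended homotopy $H_\ell$ (Theorem~\ref{thm:HTTJonas}), and $H_\ell$ inserts one factor of $h$ after the $i\circ p$-projections contained in $K_\ell$ (which are themselves killed by $h$ since $h\circ i=0$), we get $H_\ell\circ I_n^\ell=0$ and hence $P_\ell^1\circ I_n^\ell=0$. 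This settles $P\circ I=\id$.

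With $P\circ I=\id$ in hand, the morphism property of $I$ follows by a bootstrap. Write $\Theta=Q_B I-I Q_A$ for the defect; it is a coderivation along the coalgebra morphism $I$, hence determined by its corestrictions $\Theta_n^1$. Since $P\circ Q_B=Q_A\circ P$ by Theorem~\ref{thm:HTTJonas} and $P\circ I=\id$, we obtain $P\circ\Theta=Q_A\circ(P\circ I)-(P\circ I)\circ Q_A=0$. Assume inductively $\Theta_m^1=0$ for all $m\leq k$. Being a coderivation along $I$, every component $\Theta_{k+1}^\ell$ with $\ell\geq2$ is assembled from some corestriction $\Theta_m^1$ with $m\leq k$, hence vanishes, so that $0=(P\circ\Theta)_{k+1}^1=p\circ\Theta_{k+1}^1$. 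On the other hand, the analogue of Lemma~\ref{lemma:Linftyuptok} with $A$ and $B$ interchanged—valid because $Q_A$ is a codifferential by Theorem~\ref{thm:HTTJonas}—gives $L_{\infty,k+1}\circ Q_{A,k+1}^{k+1}=-Q_{B,1}^1\circ L_{\infty,k+1}$, and substituting $I_{k+1}^1=h\circ L_{\infty,k+1}$ into the homotopy identity $Q_{B,1}^1\circ h+h\circ Q_{B,1}^1=i\circ p-\id$ yields $\Theta_{k+1}^1=(i\circ p)\circ L_{\infty,k+1}$. Combining the two, $0=p\circ\Theta_{k+1}^1=(p\circ i)\circ(p\circ L_{\infty,k+1})=p\circ L_{\infty,k+1}$, whence $\Theta_{k+1}^1=(i\circ p)\circ L_{\infty,k+1}=0$, closing the induction and proving that $I$ is an $L_\infty$-morphism.

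Finally, $I_1^1=i$ is a quasi-isomorphism, being the chain map of the contraction, so $I$ is an $L_\infty$-quasi-isomorphism, and together with $P\circ I=\id$ it is a quasi-inverse of $P$. I expect the genuine difficulty to lie in the third paragraph: the decisive move is to recognize $\Theta=Q_B I-I Q_A$ as a coderivation along $I$ and to exploit $P\circ I=\id$ so that the identity $P\circ\Theta=0$ collapses, under the inductive hypothesis, to the single scalar-level equation $p\circ\Theta_{k+1}^1=0$. It is this, rather than a direct verification of $p\circ L_{\infty,k+1}=0$, that forces the $i\circ p$-correction coming from the homotopy identity to vanish and lets the morphism property propagate.
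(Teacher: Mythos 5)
Your proof is correct, and it shares its skeleton with the paper's: an induction on arity in which the order-$(k+1)$ defect of $I$ is computed, via the interchanged analogue of Lemma~\ref{lemma:Linftyuptok} and the homotopy identity, to be $\Theta^1_{k+1}=(i\circ p)\circ L_{\infty,k+1}$ (the paper writes the equivalent identity $I^1_{k+1}Q^{k+1}_{A,k+1}-Q^1_{B,1}I^1_{k+1}=(\id-i\circ p)\circ L_{\infty,k+1}$), so that everything reduces to proving $p\circ L_{\infty,k+1}=0$. The genuine difference lies in how that vanishing is obtained. The paper proves it by a direct component computation: it expands $p\circ Q^1_{B,\ell}$ through $P\circ Q_B=Q_A\circ P$, reindexes double sums, and invokes the induction hypothesis in the form $(Q_BI)^i_{k+1}=(IQ_A)^i_{k+1}$ for $i\geq 2$; the final cancellations there rest on the same vanishing $P^1_i\circ I^i_\ell=0$, $i\geq 2$, that you isolate. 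You instead establish $P\circ I=\id_A$ first, from the observation that every factor in the image of any $I^\ell_n$ lies in $\ker h$ (being in the image of $i$ or of $h$), whence $H_\ell\circ I^\ell_n=0$ and hence $P^1_\ell\circ I^\ell_n=0$ for $\ell\geq 2$; then $p\circ\Theta^1_{k+1}=0$ drops out formally from $P\circ\Theta=0$, where $\Theta=Q_BI-IQ_A$ is a coderivation along $I$ and so has vanishing higher components under the inductive hypothesis. Your route buys a structural argument in place of the paper's sum manipulations and, importantly, supplies an actual proof of $P\circ I=\id_A$, which the paper's statement attributes to $h^2=0=h\circ i$ but never verifies in its proof; what the paper's route buys is that it stays entirely at the level of Taylor coefficients and needs no notion of a coderivation along a coalgebra morphism. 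Two bookkeeping points you should add: the base case of your induction, $\Theta^1_1=Q^1_{B,1}\circ i-i\circ Q^1_{A,1}=0$ because $i$ is a chain map, and the fact that the interchanged Lemma~\ref{lemma:Linftyuptok} requires, besides $Q_A$ being a codifferential, the inductive hypothesis that $I$ is an $L_\infty$-morphism up to order $k$ --- both of which are available where you use them.
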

\begin{proof}
	We proceed by induction: assume that $I$ is an $L_\infty$-morphism up to 
	order $k$, then we have 
	\begin{align*}
		I^1_{k+1}Q_{A,k+1}^{k+1} - Q_{B,1}^1I^1_{k+1}&= 
		-Q_{B,1}^1\circ h\circ L_{\infty,{k+1}}
		+h\circ L_{\infty,{k+1}}\circ Q_{A,k+1}^{k+1}\\&
		=-Q_{B,1}^1\circ h\circ L_{\infty,{k+1}}-h\circ Q_{B,1}^{1}\circ L_{\infty,{k+1}}\\&
		=(\id-i\circ p)L_{\infty,{k+1}}.
	\end{align*}
	We used that $Q_{B,1}^1=-\D_B$ and the homotopy equation of $h$. 
	Moreover, we get with $p\circ h=0$
	\begin{align*}
	  p \circ L_{\infty,{k+1}}
		& = 
		p\circ  \left( \sum_{\ell = 2}^{k+1} Q_{B,\ell}^1 \circ I^\ell_{k+1} 
		- \sum_{\ell =1}^{k}   I_\ell^1 \circ 	Q^\ell_{A,k+1} \right) \\
		& =
		\sum_{\ell = 2}^{k+1} (P\circ Q_B)^1_\ell \circ I^\ell_{k+1} 
		- \sum_{\ell =2}^{k+1}\sum_{i=2}^{\ell} P^1_i\circ Q^i_{B,\ell} \circ 
		I^\ell_{k+1} - 
		Q^1_{A,k+1} \\
		& =
		\sum_{\ell = 2}^{k+1} (Q_A \circ P)^1_\ell \circ I^\ell_{k+1} 
		- \sum_{i =2}^{k+1}\sum_{\ell=i}^{k+1} P^1_i\circ Q^i_{B,\ell} \circ 
		I^\ell_{k+1} - 		Q^1_{A,k+1} \\
		& =
		Q^1_{A,k+1} - \sum_{i =2}^{k+1}\sum_{\ell=i}^{k+1} 
		P^1_i\circ I^i_{\ell} \circ 	Q^\ell_{A,k+1} - 		Q^1_{A,k+1}
		=
		0,
	\end{align*}
	and therefore $I$ is an $L_\infty$-morphism. 
\end{proof}

\begin{remark}
  Note that in the homotopy transfer theorem the property $h^2=0$ is not 
	needed, and that one can also adapt the above construction of $I$ 
	to this more general case.
\end{remark}

Note that there exists a homotopy equivalence relation $\sim$ between 
$L_\infty$-morphisms, see e.g. \cite{dolgushev:2007a} such that 
equivalent $L_\infty$-morphisms map Maurer-Cartan elements to equivalent 
Maurer-Cartan elements, see e.g. 
\cite[Lemma~B.5]{bursztyn.dolgushev.waldmann:2012a} for the case of DGLAs 
and \cite[Proposition~1.4.6]{kraft:2021a} for the case of flat 
$L_\infty$-algebras. Then we get:

\begin{corollary}
\label{cor:IPsimId}
  In the above setting one has $ P \circ I = \id_A$ and 
	$I \circ P \sim \id_B$. In particular, assume that one has complete 
	descending filtrations on $A,B$ such that all the maps are compatible. 
	Then every Maurer-Cartan element $\pi\in \mathcal{F}^1B$ 
	is equivalent to $(I\circ P)^1(\cc{\exp}(\pi))$.
\end{corollary}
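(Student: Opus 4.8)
The first identity $P \circ I = \id_A$ requires no new work: it is already recorded in Proposition~\ref{prop:Infinityinclusion}, where the recursion $I_{k+1}^1 = h \circ L_{\infty,k+1}$ together with $h^2 = 0 = h \circ i$ and $p \circ h = 0$ forces the strict equality. So my plan is to concentrate on the two genuinely new assertions, the homotopy $I \circ P \sim \id_B$ and the resulting equivalence of Maurer--Cartan elements, and to derive them from the homotopy theory of flat $L_\infty$-algebras rather than by an explicit construction.

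For $I \circ P \sim \id_B$ I would argue in the homotopy category. First I would note that $P$ and $I$ are both $L_\infty$-quasi-isomorphisms: their linear parts $P_1^1 = p$ and $I_1^1 = i$ are the structure maps of the special deformation retract and hence quasi-isomorphisms of complexes, the homotopy being $\id_B - i \circ p = \D_B h + h \D_B$. Since $L_\infty$-quasi-isomorphisms become invertible with respect to the homotopy relation $\sim$ of \cite{dolgushev:2007a}, the class $[P]$ is invertible; the strict identity $P \circ I = \id_A$ then gives $[P] \circ [I] = [\id_A]$, whence $[I] = [P]^{-1}$ and therefore $[I] \circ [P] = [\id_B]$, i.e. $I \circ P \sim \id_B$. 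I expect this to be the main obstacle: one has to ensure that the homotopy relation and the inversion of quasi-isomorphisms are actually available in our filtered (rather than merely nilpotent) setting. Here I would lean on the completeness of the descending filtrations on $A$ and $B$ and the compatibility of all maps with them, exactly as assumed in the statement, so that the machinery of \cite{dolgushev:2007a} applies.

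Finally, for the Maurer--Cartan statement I would invoke the general principle that homotopic $L_\infty$-morphisms send Maurer--Cartan elements to equivalent ones (see \cite[Lemma~B.5]{bursztyn.dolgushev.waldmann:2012a} for DGLAs and \cite[Proposition~1.4.6]{kraft:2021a} for flat $L_\infty$-algebras). Applying this to $I \circ P \sim \id_B$ and to a Maurer--Cartan element $\pi \in \mathcal{F}^1 B$, completeness of the filtration guarantees that $(I \circ P)^1(\cc{\exp}(\pi))$ is a well-defined Maurer--Cartan element, and it is equivalent to $\id_B^1(\cc{\exp}(\pi)) = \pi$, which is the claim.
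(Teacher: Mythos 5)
Your proposal is correct and takes essentially the same route as the paper: there, the strict identity $P \circ I = \id_A$ from Proposition~\ref{prop:Infinityinclusion} is combined with a homotopy quasi-inverse $I'$ of $P$ (obtained from \cite[Proposition~3.8]{kraft.schnitzer:2021a:pre}) via the chain $I \circ P \sim I' \circ P \circ I \circ P = I' \circ P \sim \id_B$, which is precisely your observation that a one-sided inverse of an invertible homotopy class is automatically two-sided. The Maurer--Cartan statement is then settled exactly as you do, by invoking that homotopic $L_\infty$-morphisms map Maurer--Cartan elements to equivalent ones.
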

\begin{proof}
By \cite[Proposition~3.8]{kraft.schnitzer:2021a:pre} $P$ admits a quasi-inverse $I'$ such that $P \circ I' \sim \id_A$ and $I'\circ P\sim\id_B$, 
which implies
\begin{equation*}
  I \circ P 
	=
	\id_B \circ I \circ P
	\sim 
	I' \circ P \circ I \circ P 
	=
	I' \circ P
	\sim 
	\id_B,
\end{equation*}
the rest of the statement is then clear.
\end{proof}

%
%
\bibliographystyle{nchairx}

\end{document}